\title{New bounds on the cardinality of $n$-Hausdorff and $n$-Urysohn spaces}
\author{Maddalena Bonanzinga\footnote{MIFT Department, University of Messina, Italy, mbonanzinga@unime.it .}, Nathan Carlson\footnote{Department of Mathematics, California Lutheran University, USA,  ncarlson@callutheran.edu .}, Davide Giacopello\footnote{MIFT Department, University of Messina, Italy, dagiacopello@unime.it .}}
\date{}
\begin{document}

 \maketitle

\newtheorem{theorem}{Theorem}[section]
\newtheorem{corollary}[theorem]{Corollary}
\newtheorem{question}[theorem]{Question}
\newtheorem{example}[theorem]{Example}
\newtheorem{lemma}[theorem]{Lemma}
\newtheorem{proposition}[theorem]{Proposition}
\newtheorem{property}[theorem]{Property}
\newtheorem{definition}[theorem]{Definition}
\newtheorem{remark}[theorem]{Remark}
\newtheorem{problem}[theorem]{Problem}

\newcommand{\my}[1]{\textcolor{red}{\sf #1}}
\newcommand{\green}[1]{\textcolor{green}{\sf #1}}
\newcommand{\blu}[1]{\textcolor{blue}{\sf #1}}
\newcommand{\violet}[1]{\textcolor{violet}{\sf #1}}

\begin{abstract}
Two new cardinal functions defined in the class of $n$-Hausdorff and $n$-Urysohn spaces that extend pseudocharacter and closed pseudocharacter respectively are introduced. Through these new functions bounds on the cardinality of $n$-Urysohn spaces that represent variations of known results are given. Also properties of $n$-Urysohn $n$-H-closed spaces are proved.
\end{abstract}

{\bf Keywords:} $n$-Hausdorff spaces, $n$-Urysohn spaces, $n$-H-closed space, pseudocharacter.

{\bf AMS Subject Classification:}   54A25, 54D10, 54D20, 03E99.

\section{Introduction}
 
 Throughout the paper $n$ denote an integer greater
 than or equal to $2$. In \cite{B} Bonanzinga defined the \emph{Hausdorff number} $H(X)$ of a topological space $X$ as the least cardinal number $\kappa$ such that for every subset $A\subseteq X$ with $|A| \geq \kappa$ there exists an open neighbourhood $U_{a}$ for every $a \in A$ such that $\bigcap_{a \in A} U_{a} = \emptyset$. A space $X$ is  said to be \emph{$n$-Hausdorff} if $H(X) \leq n$. Of course,  with $|X|\geq 2$, $X$ is Hausdorff iff $H(X)=2$. The \emph{Urysohn number} $U(X)$ of $X$ is the least cardinal number $\kappa$ such that for every subset $A\subseteq X$ with $|A| \geq \kappa$ there exists an open neighbourhood $U_{a}$ for every $a \in A$ such that $\bigcap_{a \in A} \overline{U_{a}} = \emptyset$. A space $X$ is  said \emph{$n$-Urysohn} if $U(X) \leq n$. Of course,  with $|X|\geq 2$, $X$ is Urysohn iff $U(X)=2$ (see \cite{BCM1,BCM2}).

For a subset $A$ of a topological space $X$ we will denote by $[A]^{<\lambda}$  $([A]^\lambda)$ the family of all subsets of $A$ of cardinality $<\lambda (= \lambda)$.

We consider cardinal invariants of topological spaces (see \cite{E, H}) and except for the Hausdorff number $H(X)$ and the Urysohn number $U(X)$, all the other
cardinal functions are multiplied by $\omega$. In particular, given a topological space $X$, we will denote with $d(X)$ its density, $\chi(X)$ its character, $\pi \chi(X)$ its $\pi$-character, $t(X)$ its tightness, $\psi(X)$ its pseudocharacter, $\psi_c(X)$ its closed pseudocharacter, $nw(X)$ its net-weight and $l(X)$ its Lindel\"of number. Let $(X,\tau)$ be a topological space, a family $\mathcal{B}\subseteq\tau$ is called $\psi$-base (or pseudobase) if $\{x\}=\bigcap\{B\in {\cal B}: x \in B\}$ for every $x\in X$.
	If $X$ is $T_1$ we define the cardinal function ``pseudo weight" as follows  $\psi w(X)=\min\{|{\cal B}|\,\,:\,\,{\cal B} \hbox{ is a } \psi\hbox{-base for } X \}+ \omega$.

Recall that, given a space $X$, the cardinal invariant $d_\theta (X)$, closely related to the density $d(X)$, is defined as follows. A subspace
$D\subseteq X$ is $\theta$-dense in $X$ if $D\cap\overline U\neq\emptyset$ for every non-empty open set $U$ of $X$. The $\theta$-density $d_\theta(X)$ is the least cardinality
of a $\theta$-dense subspace of $X$. Observe that $d_\theta(X)\leq d(X)$ for any space $X$
(see \cite{BCMP} and \cite{C0} for more details about $\theta$-density).

 In \cite[Example 14]{BCMP} it is shown that in the inequality $|X| \leq d(X)^{\chi(X)}$ which is true
for all Hausdorff spaces, one can not replace $d$ with $d_\theta$ (again, for
Hausdorff spaces) and the following is proved.

\begin{theorem}\rm\cite{BCMP}\label{BCMP} If $X$ is $n$-Urysohn space, then $|X| \leq  d_\theta (X)^{\chi(X)}$.
\end{theorem}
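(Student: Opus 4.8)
The plan is to construct a map from $X$ into a power $D^{I}$, where $D$ is a $\theta$-dense subset of least cardinality and $I$ is an index set of size $\chi(X)$, in such a way that every fibre has at most $n-1$ points. Since $|D| = d_\theta(X)$ and $|I| = \chi(X)$, such a map gives $|X| \le (n-1)\cdot d_\theta(X)^{\chi(X)} = d_\theta(X)^{\chi(X)}$, the last equality holding because the infinite cardinal $d_\theta(X)^{\chi(X)} \ge 2^\omega$ absorbs the finite factor $n-1$. The decisive point will be to encode each point by a single element of $D$ in each coordinate rather than by a subset of $D$; recording traces as subsets would only yield the much weaker bound $2^{d_\theta(X)\cdot\chi(X)}$.

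First I would fix, for each $x \in X$, an open neighbourhood base $\{V_\alpha^x : \alpha < \chi(X)\}$ and pass to the base $\{W_s^x : s \in [\chi(X)]^{<\omega}\}$ given by $W_s^x = \bigcap_{\alpha \in s} V_\alpha^x$. This base is closed under finite intersections and is indexed by the directed set $([\chi(X)]^{<\omega}, \subseteq)$, whose cardinality is $\chi(X)$. Because $D$ is $\theta$-dense and each $W_s^x$ is a non-empty open neighbourhood of $x$, we have $\overline{W_s^x} \cap D \ne \emptyset$, so we may pick $d(x,s) \in \overline{W_s^x} \cap D$. This produces, for each $x$, a net $s \mapsto d(x,s)$ in $D$ indexed by the directed set $[\chi(X)]^{<\omega}$, and hence the assignment $x \mapsto (d(x,s))_s \in D^{[\chi(X)]^{<\omega}}$.

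The key observation is that this net \emph{$\theta$-converges} to $x$: for every open $U \ni x$ there is $s_0$ with $W_{s_0}^x \subseteq U$, and then $d(x,s) \in \overline{W_s^x} \subseteq \overline{W_{s_0}^x} \subseteq \overline{U}$ for all $s \supseteq s_0$. The heart of the argument, and the only place the $n$-Urysohn hypothesis enters, is the claim that one net can $\theta$-converge to at most $n-1$ points. Indeed, if it $\theta$-converged to distinct $x_1,\dots,x_n$, then $U(X)\le n$ supplies open sets $U_i \ni x_i$ with $\bigcap_{i=1}^n \overline{U_i} = \emptyset$; choosing for each $i$ a threshold $s_0^{(i)}$ beyond which the net lies in $\overline{U_i}$ and evaluating at $s^* = \bigcup_i s_0^{(i)}$, a legitimate index by directedness, forces a single value of the net into $\bigcap_i \overline{U_i} = \emptyset$, a contradiction.

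Consequently the map $x \mapsto (d(x,s))_s$ is at most $(n-1)$-to-one, since any points sharing an image are $\theta$-limits of one and the same net. The step I expect to be the main obstacle is precisely the passage from an arbitrary neighbourhood base to the directed, intersection-closed base $\{W_s^x\}$: this is what turns the chosen points into a genuine net and makes the ``uniqueness of limits up to $n-1$'' phenomenon available, whereas without directedness equal images would carry no convergence information and the fibres could not be bounded. The remaining cardinal arithmetic, namely $|D^{[\chi(X)]^{<\omega}}| = |D|^{\chi(X)} = d_\theta(X)^{\chi(X)}$ together with the absorption of the factor $n-1$, is routine.
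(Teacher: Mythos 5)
Your proposal is correct: the net $s\mapsto d(x,s)$ indexed by the directed set $[\chi(X)]^{<\omega}$ does $\theta$-converge to $x$, the $n$-Urysohn property does bound the number of $\theta$-limits of a single net by $n-1$, and the cardinal arithmetic is right. The paper itself only cites this theorem from [BCMP] without reproducing a proof; your argument is the standard Pospí\v{s}il-style encoding of points as $D$-valued nets, adapted from ordinary convergence and Hausdorffness to $\theta$-convergence and the $n$-Urysohn property, which is essentially the approach of the cited source.
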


	For a space $X$,
	the $n$-Hausdorff pseudocharacter of $X$ (where $n\geq 2$), denoted by $n$-$H\psi(X)$ is the minimum infinite cardinal $\kappa$ such that for each point $x$ there is a collection $\{V(\alpha, x): \alpha< \kappa\}$ of open neighbourhoods of $x$ such that if $x_1,...,x_n$ are different points in $X$ then there exist $\alpha_1,..., \alpha_n<\kappa$ such that $V(\alpha_1, x_1)\cap...\cap V(\alpha_n, x_n)=\emptyset$ \cite{B}. This function is defined for $n$-Hausdorff spaces. Of course, with $|X|\geq 2$, $2$-Hausdorff pseudocharacter of $X$ is exactly the Hausdorff pseudocharacter of $X$, denoted by $H\psi(X)$ \cite{H1991}.

It is well known that the following chain holds for every Hausdorff space $X$
$$ \psi(X)\leq \psi_c(X)\leq H\psi(X)\leq\chi(X).$$

Of course, $H\psi(X)\leq\kappa\Rightarrow n$-$H\psi(X)\leq\kappa$. Also there exist spaces having countable $n$-Hausdorff pseudocharacter which are not first countable \cite{B}.\\

Recall the following proposition
\begin{theorem}\rm \cite[Proposition 30]{B}
	If $X$ is an $n$-Hausdorff space then $|X|\leq d(X)^{n\hbox{-}H\psi(X)}$.
\end{theorem}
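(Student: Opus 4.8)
The plan is to prove $|X| \leq d(X)^{n\text{-}H\psi(X)}$ for an $n$-Hausdorff space $X$ by a counting argument that exploits the defining collections of the $n$-Hausdorff pseudocharacter. Set $\kappa = n\text{-}H\psi(X)$ and fix, for each point $x \in X$, a collection $\{V(\alpha,x) : \alpha < \kappa\}$ of open neighbourhoods witnessing the definition: whenever $x_1,\dots,x_n$ are distinct points, there are indices $\alpha_1,\dots,\alpha_n < \kappa$ with $V(\alpha_1,x_1)\cap\cdots\cap V(\alpha_n,x_n) = \emptyset$. Also fix a dense set $D \subseteq X$ with $|D| = d(X)$. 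The goal is to assign to each point $x$ a ``code'' drawn from a set of size at most $d(X)^\kappa$ in such a way that the code determines $x$ uniquely.

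The key idea is that for each $x$ and each $\alpha < \kappa$, the open set $V(\alpha,x)$ meets $D$, so I can record the trace $V(\alpha,x) \cap D$. This produces a function $f_x : \kappa \to \mathcal{P}(D)$, namely $f_x(\alpha) = V(\alpha,x) \cap D$. The number of such functions is at most $(2^{d(X)})^\kappa = 2^{d(X)\cdot\kappa}$, which is too large; so instead I would encode, for each $\alpha$, not the full trace but rather enough information to pin down $x$. The cleaner route is to show that the assignment $x \mapsto \langle \overline{V(\alpha,x)\cap D} : \alpha<\kappa\rangle$, or more economically $x \mapsto \langle V(\alpha,x)\cap D : \alpha < \kappa \rangle$ restricted appropriately, is injective up to a bounded ambiguity. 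First I would argue that a point is determined by the closures: if $x \neq y$, then applying the $n$-Hausdorff condition to a tuple containing both $x$ and $y$ (padding with other points when $n > 2$) yields indices making an intersection of the $V$'s empty, and since each $V(\alpha,z)$ is open and $D$ is dense, the trace $V(\alpha,z)\cap D$ is dense in $V(\alpha,z)$, so the emptiness of the intersection is detected at the level of the dense traces.

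More precisely, I would establish the injectivity of the map $x \mapsto \langle \mathrm{cl}_X(V(\alpha,x)\cap D) : \alpha < \kappa\rangle$ into the product of at most $d(X)^\kappa$-many coordinates. The counting then runs as follows: the number of distinct possible values of each coordinate $V(\alpha,x)\cap D$ is bounded by the number of subsets of $D$ that arise as traces of the open neighbourhoods, but to get the clean bound $d(X)^\kappa$ one restricts attention to a dense selection. A standard and economical way is: each nonempty open set meeting $D$ contains a point of $D$; choose for each $x$ and each $\alpha$ a point $d(\alpha,x) \in V(\alpha,x)\cap D$, giving a map $x \mapsto \langle d(\alpha,x) : \alpha<\kappa\rangle \in D^\kappa$. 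This map need not be injective by itself, but combined with the closure information it will separate points. The number of such tuples is exactly $|D|^\kappa \leq d(X)^\kappa$, yielding the bound once injectivity (on the enriched code) is verified.

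The main obstacle is verifying injectivity of the separating map, and in particular handling the case $n > 2$, where the emptiness of an $n$-fold intersection only guarantees separation of a tuple of $n$ points rather than a clean pairwise separation. To turn ``no common point among $x_1,\dots,x_n$'' into a statement that pins down an individual $x$, I would argue by contradiction: suppose two distinct points $x,y$ receive the same full code. Apply the $n$-Hausdorff condition to a tuple $(x,y,z_3,\dots,z_n)$ of distinct points to obtain empty $\bigcap_{i} V(\alpha_i, \cdot)$; then show that equality of codes forces $x$ and $y$ to behave identically with respect to these neighbourhoods, so that $x$ would have to lie in the (empty) intersection, or that the dense traces coincide in a way contradicting the separation — the care is in choosing how the code records enough of the $V(\alpha,x)$ structure (via dense traces and their closures) that the open-set emptiness at the tuple level descends to separate $x$ from $y$. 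I expect this descent argument, rather than the raw cardinal arithmetic, to be where the real work lies.
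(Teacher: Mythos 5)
Your proposal sets up the right framework (code each point by a $\kappa$-indexed object over a dense set $D$, where $\kappa=n\hbox{-}H\psi(X)$, and use the defining separation property to bound the fibers), but it never closes the one gap that constitutes the actual content of the theorem: producing a code that \emph{both} lives in a set of size $d(X)^{\kappa}$ \emph{and} separates points. You correctly observe that the full trace code $x\mapsto\langle V(\alpha,x)\cap D:\alpha<\kappa\rangle$ over-counts (it lands in a set of size $(2^{d(X)})^{\kappa}=2^{d(X)\kappa}$, which only yields the weaker bound $|X|\le 2^{d(X)\kappa}$), and that the point-selection code $x\mapsto\langle d(\alpha,x):\alpha<\kappa\rangle\in D^{\kappa}$ counts correctly but does not separate; your proposed fix, ``combined with the closure information,'' simply re-imports the over-counted trace data. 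You never specify a final code, so the cardinal arithmetic you dismiss as ``raw'' is in fact the unresolved step. Compare the template used throughout this paper (Theorem \ref{nw}, Theorem \ref{new}, and the $\pi\chi$-theorem): in each case one manufactures, for every $x$ and $\alpha$, a \emph{small} object --- a network element $K(\alpha,x)$, or a set $D(\alpha,x)\in[D]^{\le\kappa}$ with $x\in\overline{D(\alpha,x)}\subseteq\overline{V(\alpha,x)}$ --- using auxiliary structure (a network, a local $\pi$-base, tightness) that the hypothesis ``$n$-Hausdorff plus dense set'' alone does not obviously supply. Your proposal contains no substitute for that step.

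There is also a quantifier error in your separation argument. For $n>2$ the coding map genuinely need not be injective, so you cannot ``argue by contradiction: suppose two distinct points $x,y$ receive the same full code'' and then pad with auxiliary points $z_3,\dots,z_n$: the resulting empty intersection $V(\alpha_1,x)\cap V(\alpha_2,y)\cap V(\alpha_3,z_3)\cap\dots\cap V(\alpha_n,z_n)=\emptyset$ involves neighbourhoods of the $z_i$, about which the equality of the codes of $x$ and $y$ says nothing, so no contradiction follows (nor should one --- the correct conclusion is only that each fiber has fewer than $n$ points). The right statement is that the map is $(n-1)$-to-one: assume $n$ distinct points $x_1,\dots,x_n$ \emph{all} share the same code and apply the defining property directly to that $n$-tuple. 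For the trace code this does work cleanly --- from $\bigcap_{i}V(\alpha_i,x_i)=\emptyset$ and equality of traces one gets $D\cap\bigcap_i V(\alpha_i,x_1)=\emptyset$, contradicting density of $D$ since $x_1$ lies in that open set --- which is exactly why the over-counting of the trace code, not the separation, is where your argument breaks down.
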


	For a space $X$ the $n$-Urysohn pseudocharacter of $X$, denoted by  $n$-$U\psi(X)$ is the minimum infinite cardinal $\kappa$ such that for each point $x$ there is a collection $\{V(\alpha, x): \alpha< \kappa\}$ of open neighbourhoods of $x$ such that if $x_1,...,x_n$ are different points in $X$ then there exist $\alpha_1,..., \alpha_n<\kappa$ such that $\overline{V(\alpha_1, x_1)}\cap...\cap \overline{V(\alpha_n, x_n)}=\emptyset$ \cite{BCP}. This function is defined for $n$-Urysohn spaces. Of course, $2$-Urysohn pseudocharacter of $X$ is exactly the Urysohn pseudocharacter of $X$, denoted by $U\psi(X)$, and introduced in \cite{S}.
The following holds for every Urysohn space:
$$ \psi(X)\leq \psi_c(X)\leq H\psi(X)\leq U\psi(X)\leq\chi(X).$$
Of course, $U\psi(X)\leq\kappa\Rightarrow n$-$U\psi(X)\leq\kappa$.

The almost Lindel\"of degree with respect to closed sets, $aL_c(X)$ is the supremum of $aL(E,X)$ where $E$ is a closed subset of the space $X$ and $aL(E,X)$ is the minimum cardinal number $\kappa$ such that for every family $\cal U$ of open subsets of $X$ that covers $E$ there exist a subfamily $\cal V$ of $\cal U$ such that $|\cal V|\leq \kappa$ and $E\subseteq\bigcup\{\overline{V}: V\in {\cal V}\}$ \cite{WD}. The weak Lindel\"{o}f degree of a space $X$, $wL(X)$, is the minimum cardinal number $\kappa$ such that for every open cover $\cal U$ of $X$ there exist a subfamily $\cal V$ of $\cal U$ such that $|\cal V|\leq \kappa$ and $X=\overline{\bigcup{\cal V}}$.\\

In Section \ref{S2} we introduce a new cardinal function, denoted by $n$-$\psi(\cdot)$ and defined for $n$-Hausdorff spaces, which is an extension of pseudocharacter $\psi(\cdot)$, defined for $T_1$ spaces. Note that $n$-Hausdorffness and satisfying $T_1$ separation axiom are independent properties. We prove that $|X|\leq nw(X)^{n\hbox{-}\psi(X)}$ for each $n$-Hausdorff space $X$ which is true for $T_1$ spaces with $\psi(X)$ in the place of $n\hbox{-}\psi(X)$. Also we obtain that for $T_1$ $n$-Hausdorff spaces the inequality $|X|\leq\psi w(X)^{l(X)n\hbox{-}\psi(X)}$ holds.

In Section \ref{S3} we introduce a new cardinal function, denoted by $n$-$\psi_c(\cdot)$ and defined for $n$-Urysohn spaces which is an extension of the cardinal function $\psi_c(\cdot)$, defined for Hausdorff spaces. Using this new cardinal function we prove that $|X|\leq d(X)^{n\hbox{-}\psi_c(X)wt(X)}$ for every $n$-Urysohn space $X$ where $wt(\cdot)$ is a weaker form of tightness introduced  by Carlson in \cite{C}, and the following modification for $n$-Urysohn spaces of the Willard-Dissanayake's inequality: $|X|\leq d(X)^{\pi\chi(X)n\hbox{-}\psi_c(X)}$. Also we show that if $X$ is an $n$-Urysohn space, then $|X| \leq  \pi\chi(X)^{c(X)\chi(X)}$ and we give a partial answer to the question whether the inequality $|X| \leq  \pi\chi(X)^{c(X)n\hbox{-}\psi_c(X)}$ is true for every $n$-Urysohn space.

In Section \ref{S4} we deal with the class of $n$-H-closed spaces and we give some properties of $n$-Urysohn $n$-H-closed spaces. Recall that an $n$-Hausdorff space is called $n$-$H$-closed if it is closed in every $n$-Hausdorff space in which it is embedded \cite{BBCP0}. In particular we notice that if $X$ is an $n$-Urysohn $n$-H-closed space, then $|X|\leq 2^{n\hbox{-}U\psi(X)}$ hence, $X\leq2^{\chi(X)}$). In addition, since $|X|\leq 2^{\psi_c(X)}$ for every H-closed space $X$ \cite{DP}, we pose the question whether $|X|\leq 2^{n\hbox{-}\psi_c(X)}$ is true for $n$-Urysohn $n$-H-closed spaces. Also we prove that $|X|\leq wL(X)^{\chi(X)}$ holds in the class of $n$-Urysohn locally $n$-H-closed spaces.

\section{The $n$-pseudocharacter of an $n$-Hausdorff space}\label{S2}

In this section we introduce the following cardinal function which is an extension of pseudocharacter in the class of $n$-Hausdorff spaces.
\begin{definition}\rm
	For a space $X$,
	the $n$-\emph{pseudocharacter} of $X$ (where $n\geq 2$), denoted by $n$-$\psi(X)$, is the minimum infinite cardinal $\kappa$ such that for each point $x$ there is a collection $\{V(\alpha,x)\,:\, \alpha<\kappa\}$ of open neighbourhoods of $x$ such that if $x_1,...,x_n\in X$ are $n$ different points, then $\bigcap_{\alpha<\kappa}V(\alpha,x_1)\cap.... \cap \bigcap_{\alpha<\kappa} V(\alpha,x_n)=\emptyset$.
	In other words,
	$n$-$\psi(X)$ is the minimum infinite cardinal $\kappa$ such that for each point $x\in X$ there exists a $G_{\kappa}$-set $A_x$ containing $x$ such that if $x_1,..., x_n\in X$ are $n$ different points, then $A_{x_1}\cap...\cap A_{x_n}=\emptyset$. Note that the $2$-pseudocharacter is defined for $T_1$ spaces while the $n$-pseudocharacter, $n>2$, is defined for $n$-Hausdorff spaces.
\end{definition}

The $2$-pseudocharacter is exactly the pseudocharacter as the following proposition shows.
\begin{proposition}\rm\label{cap-Hpsi(X)=psi(X)}
	Let $X$ be a $T_1$ space, then $2$-$\psi(X)= \psi(X)$.
\end{proposition}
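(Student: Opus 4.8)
The plan is to prove the two inequalities $\psi(X) \leq 2$-$\psi(X)$ and $2$-$\psi(X) \leq \psi(X)$ separately, so that together they give the claimed equality. The crucial observation throughout is that, in the defining condition for $2$-$\psi(X)$, each witnessing $G_\kappa$-set $A_x$ is required to \emph{contain} the point $x$; when this is combined with the pairwise disjointness demanded for distinct points, the containment turns out to be rigid enough to collapse each $A_x$ to the singleton $\{x\}$. This is the feature special to the case $n=2$ that makes the $n$-pseudocharacter degenerate to ordinary pseudocharacter.

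For the inequality $\psi(X) \leq 2$-$\psi(X)$, set $\kappa = 2$-$\psi(X)$ and fix a witnessing assignment $x \mapsto A_x$, where each $A_x$ is a $G_\kappa$-set with $x \in A_x$ and $A_{x_1} \cap A_{x_2} = \emptyset$ whenever $x_1 \neq x_2$. I would first show that $A_x = \{x\}$ for every $x$: if some $y \in A_x$ had $y \neq x$, then, since $y \in A_y$ as well, we would get $y \in A_x \cap A_y$, contradicting the disjointness of $A_x$ and $A_y$. Thus each singleton $\{x\} = A_x$ is a $G_\kappa$-set, and by the definition of pseudocharacter $\psi(X) \leq \kappa = 2$-$\psi(X)$.

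For the reverse inequality $2$-$\psi(X) \leq \psi(X)$, set $\kappa = \psi(X)$ and, using that $X$ is $T_1$ so that $\psi$ is well defined, fix for each $x$ a family $\{V(\alpha,x) : \alpha < \kappa\}$ of open neighbourhoods of $x$ with $\bigcap_{\alpha < \kappa} V(\alpha, x) = \{x\}$. These same families witness $2$-$\psi(X) \leq \kappa$: for distinct points $x_1, x_2$ the corresponding $G_\kappa$-sets are exactly $\{x_1\}$ and $\{x_2\}$, whose intersection is empty, so the defining condition for $2$-$\psi(X)$ is satisfied with the cardinal $\kappa = \psi(X)$.

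Since both quantities are infinite cardinals (every cardinal function here is multiplied by $\omega$), combining the two inequalities yields $2$-$\psi(X) = \psi(X)$. I do not anticipate a genuine obstacle in this argument; the only point requiring care is to exploit the containment $x \in A_x$ in the first direction, as it is precisely this property—absent when $n > 2$, where disjointness is imposed only on $n$-fold intersections—that forces each $A_x$ to be a singleton and collapses $2$-$\psi$ to $\psi$.
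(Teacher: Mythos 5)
Your proof is correct and follows essentially the same route as the paper: the nontrivial direction is established by observing that if $y\neq x$ lay in the $G_\kappa$-set $A_x$, then $y\in A_x\cap A_y$ would contradict the required disjointness, forcing $A_x=\{x\}$, which is exactly the paper's contradiction argument phrased in the equivalent $G_\kappa$-set language. The reverse inequality is noted as trivial in the paper and you simply spell it out.
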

\begin{proof}
	The proof of $2$-$\psi(X)\leq \psi(X)$ is trivial. 
	For every $x\in X$ let $\{V(\alpha,x)\,:\,\alpha<\kappa\}$ be a family of open neighbourhood of $x$ such that if $x,y\in X$ are two different points, then $\bigcap_{\alpha<\kappa}V(\alpha,x)\cap\bigcap_{\alpha<\kappa}V(\alpha,y)=\emptyset$. We want to prove that $\{x\}=\bigcap_{\alpha<\kappa}V(\alpha,x)$. By contradiction let $y\in \bigcap_{\alpha<\kappa}V(\alpha,x)$ be a point different from $x$. Then if we consider $\bigcap_{\alpha<\kappa}V(\alpha,x)\cap\bigcap_{\alpha<\kappa}V(\alpha,y)$ it is certainly nonempty because it contains $y$. So we have a contradiction because $\bigcap_{\alpha<\kappa}V(\alpha,x)\cap\bigcap_{\alpha<\kappa}V(\alpha,y)=\emptyset$.
\end{proof}

It is straightforward to prove the following proposition.
\begin{proposition}\rm
	If $X$ is an $n$-Hausdorff space, $n\geq 2$, then $n$-$\psi(X)\leq n\hbox{-}H\psi(X)$.
\end{proposition}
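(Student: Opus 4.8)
The plan is to show that any family of open neighbourhoods witnessing the $n$-Hausdorff pseudocharacter automatically witnesses the $n$-pseudocharacter, with no increase in cardinality. Set $\kappa = n\hbox{-}H\psi(X)$. By the definition of $n\hbox{-}H\psi(X)$, for each point $x \in X$ there is a collection $\{V(\alpha, x) : \alpha < \kappa\}$ of open neighbourhoods of $x$ with the property that whenever $x_1, \ldots, x_n$ are $n$ distinct points of $X$, there exist indices $\alpha_1, \ldots, \alpha_n < \kappa$ such that $V(\alpha_1, x_1) \cap \cdots \cap V(\alpha_n, x_n) = \emptyset$. I would simply propose to reuse these very collections to verify the defining inequality for $n$-$\psi(X)$.

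The key step is the monotonicity of intersection. For each index $i$ we have $\alpha_i < \kappa$, so the full intersection is contained in a single term, namely $\bigcap_{\alpha < \kappa} V(\alpha, x_i) \subseteq V(\alpha_i, x_i)$. Intersecting these containments over $i = 1, \ldots, n$ yields
\[
\bigcap_{\alpha<\kappa} V(\alpha, x_1) \cap \cdots \cap \bigcap_{\alpha<\kappa} V(\alpha, x_n) \subseteq V(\alpha_1, x_1) \cap \cdots \cap V(\alpha_n, x_n) = \emptyset .
\]
Since $x_1, \ldots, x_n$ were arbitrary distinct points, the family $\{V(\alpha, x) : \alpha < \kappa\}$ satisfies exactly the condition required in the definition of $n$-$\psi(X)$, and therefore $n$-$\psi(X) \leq \kappa = n\hbox{-}H\psi(X)$.

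I do not anticipate any genuine obstacle: the two definitions are set up so that the $n$-Hausdorff pseudocharacter condition, which demands a \emph{single} empty finite intersection chosen among the neighbourhoods, is formally stronger than the $n$-pseudocharacter condition, which only demands that the intersection of the full $G_\kappa$-sets be empty. The sole technical point to keep in mind is that both cardinals are by definition infinite (through the ``$+\,\omega$'' convention), so there is no edge case to check at finite cardinalities; the containment above does all the work.
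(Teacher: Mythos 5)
Your proof is correct and is exactly the straightforward argument the paper has in mind (the paper states the proposition without proof, calling it straightforward): the witnessing families for $n\hbox{-}H\psi(X)$ serve directly as witnesses for $n$-$\psi(X)$ via the containment $\bigcap_{\alpha<\kappa}V(\alpha,x_i)\subseteq V(\alpha_i,x_i)$. No issues.
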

Recall that any Cantor cube $2^{\kappa}$
	has a dense subset $X$ with $|X|=\kappa$ and $\psi(X) = \omega$. For sake of completeness, we give the contruction of the space $X$.
Let $\{s_\alpha:\alpha<\kappa\}$ be an enumeration
of finite nonempty partial functions from $\kappa$ to 2.
Recursively over $\alpha$, construct a
sequence of countable partial functions
$\{p_\alpha:\alpha\in\kappa\}$ from $\kappa$ to $2$ such that (1) $p_\alpha$ extends $s_\alpha$ and 
(2) $p_\alpha^{-1}(1)\setminus s_\alpha^{-1}(1)$ is an infinite subset of    
	$\kappa\setminus\bigcup_{\beta<\alpha} p_\beta^{-1}(1) $.
Now let $f_\alpha\in 2^\kappa$ be an extension of $p_\alpha$
such that $f_\alpha(\beta)=0$ for all $\beta\not\in dom(p_\alpha)$.
It is clear that $F=\{f_\alpha:\alpha<\kappa\}$ is a dense subset of $2^\kappa$ of cardinality $\kappa$. Also $F$ has countable pseudocharacter. Indeed, it follows from (2) that
$\{f_\alpha\}=F \cap \bigcap\{U_\beta : p_\alpha(\beta)=1\}$,
where $U_\beta=\{x\in 2^\kappa: x(\beta)=1\}$.

Further, recall that in the inequality $|X| \le 2^{c(X)\chi(X)}$ for any Hausdorff space $X$ \cite{HJ, H1984}, character can be replaced with Hausdorff pseudocharacter in
the way similar to the proof of the same theorem (see \cite[Theorem 51]{B}).

Then, since cellularity is hereditary with respect to dense subspaces, it follows that $c(X)=\omega$. By the inequality  $|X| \le 2^{c(X)H\psi(X)}$ for any Hausdorff space, we have that if $\kappa >2^\omega$  any dense subset $X$ of $2^{\kappa}$, with $|X|=\kappa$ and $\psi(X) = \omega$, is an example of a Tychonoff space having countable pseudocharacter and uncountable Hausdorff pseudocharacter. Note that, $\omega=\psi(X)<H\psi(X)\leq\chi(X)=\kappa$; recently, in \cite{BCS} Bella, Carlson and Spadaro constructed a Hausdorff space $X$ such that $\psi(X)<H\psi(X)<\chi(X)$ and asked if there exists a regular space distinguishing the three cardinal functions. Our example is Tychonoff and at least distinguishes $\psi(X)$ and $H\psi(X)$, giving a partial answer to this question. \\

Recall the following result.
\begin{theorem}\rm\cite{B}\label{ex.3-Hausdorff} For a 3-Hausdorff space X, $|X|\leq 2^{2^{c(X) 3-H\psi(X)}}$.
\end{theorem}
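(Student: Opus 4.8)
The plan is to adapt the Hajnal--Juh\'asz closing-off argument that yields $|X|\le 2^{c(X)\chi(X)}$ for Hausdorff spaces \cite{HJ,H1984} (and its pseudocharacter refinement in \cite[Theorem 51]{B}) to the $3$-Hausdorff setting, the crucial difference being that separation now requires \emph{triples} of points rather than pairs. Write $\kappa = c(X)\cdot 3\hbox{-}H\psi(X)$ and $\lambda = 2^{\kappa}$, and for each $x\in X$ fix a family $\{V(\alpha,x):\alpha<\kappa\}$ witnessing $3\hbox{-}H\psi(X)\le\kappa$. The goal is to produce a set $A\subseteq X$ with $|A|\le 2^{\lambda}=2^{2^{c(X)3\hbox{-}H\psi(X)}}$ that is closed under a suitable selection operation, and then to show $A=X$.

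First I would fix the combinatorial ``configurations'' that the construction must absorb. For a set $S$ and a function $g$ assigning to each ordered pair $(x,y)$ of distinct points of $S$ indices $a(x,y),b(x,y)<\kappa$, put $G(S,g)=\bigcup\{V(a(x,y),x)\cap V(b(x,y),y):(x,y)\in S^2,\ x\neq y\}$. The closing-off builds an increasing continuous chain $\langle A_{\xi}:\xi<\lambda^{+}\rangle$ of subsets of $X$, each of size $\le 2^{\lambda}$, so that whenever $S\in[\bigcup_{\eta<\xi}A_{\eta}]^{\le\lambda}$, $g$ is such a pair-assignment on $S$, and $X\setminus\overline{G(S,g)}\neq\emptyset$, the set $A_{\xi}$ meets $X\setminus\overline{G(S,g)}$. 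Setting $A=\bigcup_{\xi<\lambda^{+}}A_{\xi}$, the cardinality bookkeeping is the place where the second exponential surfaces: over a ground set of size $\le 2^{\lambda}$ there are $(2^{\lambda})^{\le\lambda}\cdot\kappa^{\le\lambda}=2^{\lambda}$ pairs $(S,g)$, so each stage adds at most $2^{\lambda}$ points and $|A|\le 2^{\lambda}\cdot\lambda^{+}=2^{\lambda}$.

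The heart of the proof is to show $A=X$. Assuming $p\in X\setminus A$, I would separate $p$ from $A$ using the triple condition applied to $p$ together with each pair from $A$: for all distinct $x,y\in A$ choose $a(x,y),b(x,y),c(x,y)<\kappa$ with $V(a(x,y),x)\cap V(b(x,y),y)\cap V(c(x,y),p)=\emptyset$. A double pigeonhole on the $\kappa$-many possible values of the index attached to $p$, followed by an application of $c(X)\le\kappa$ to thin the resulting family of open sets to one with dense union, should produce a pair-assignment $g$ on a set $S$ drawn from $A$ with $S$ of size $\le\lambda$ such that $A\subseteq\overline{G(S,g)}$ while, simultaneously, the neighbourhoods of $p$ frozen along the way keep $p\notin\overline{G(S,g)}$. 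Once such $S$ and $g$ are in hand, the defining property of $A$ forces $A\cap(X\setminus\overline{G(S,g)})\neq\emptyset$, contradicting $A\subseteq\overline{G(S,g)}$; hence $A=X$ and $|X|\le 2^{\lambda}$.

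The main obstacle is exactly this separation step, and it is also what explains the jump from a single to a double exponential. In the Hausdorff case one separates $p$ from each single point of $A$, and one application of cellularity extracts a $\kappa$-sized subfamily with dense union, keeping every stage of size $2^{\kappa}$. For $3$-Hausdorff the separating open sets are naturally indexed by $[A]^{2}$ rather than by $A$, and manufacturing from them a single open set $G$ with $A\subseteq\overline{G}$ yet $p\notin\overline{G}$ seems to require first freezing a neighbourhood of $p$ (one exponential's worth of case analysis on the $p$-index, effectively reducing the configuration to a Hausdorff-type one on a subspace) and only then applying the Hajnal--Juh\'asz extraction, so that the ground set over which configurations are counted has size $2^{\kappa}$ and the stages grow to $2^{2^{\kappa}}$. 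Verifying that the frozen neighbourhood of $p$ can be chosen uniformly enough that $p\notin\overline{G(S,g)}$ is maintained, while $S$ is kept of size $\le\lambda$, is the delicate point on which the whole argument turns.
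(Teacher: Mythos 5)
The paper does not reproduce a proof of this theorem: it is quoted from \cite{B}, where the argument is a partition-calculus one in the style of Hajnal--Juh\'asz, not a closing-off recursion. Concretely, with $\kappa=c(X)\,3\hbox{-}H\psi(X)$ and families $\{V(\alpha,x):\alpha<\kappa\}$ witnessing $3\hbox{-}H\psi(X)\le\kappa$, one colours each triple $x<y<z$ (in a fixed well-order) by some $(\alpha,\beta,\gamma)\in\kappa^{3}$ with $V(\alpha,x)\cap V(\beta,y)\cap V(\gamma,z)=\emptyset$. If $|X|>2^{2^{\kappa}}$, the Erd\H{o}s--Rado relation $\bigl(2^{2^{\kappa}}\bigr)^{+}\to(\kappa^{+})^{3}_{\kappa}$ gives a homogeneous $H\in[X]^{\kappa^{+}}$ with constant colour $(\alpha,\beta,\gamma)$; setting $W_{x}=V(\alpha,x)\cap V(\beta,x)\cap V(\gamma,x)\ni x$, every triple $W_{x}\cap W_{y}\cap W_{z}$ with $x,y,z\in H$ distinct is empty, and from this one extracts a cellular family of size $\kappa^{+}$ (either $\kappa^{+}$ many of the pairwise intersections $W_{x}\cap W_{y}$ are nonempty, and these are pairwise disjoint because any two of them sit inside a triple intersection, or else all but $\kappa$ many of the $W_{x}$ are already pairwise disjoint), contradicting $c(X)\le\kappa$. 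The double exponential is precisely the Erd\H{o}s--Rado bound for triples, which is also what produces the $(n-1)$-fold tower in Theorem \ref{ex.n-Hausdorff}.

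Your closing-off scheme has a genuine gap exactly at the step you flag as delicate, and I do not see how to close it in the form proposed. In the Hausdorff version of the argument the inclusion $A\subseteq\overline{G}$ is automatic because each $x\in A$ lies in its own separating neighbourhood $V(\alpha(x),x)$, after which cellularity thins the family to $\kappa$ many sets with dense union in the original one. Your sets $G(S,g)$ are unions of pieces $V(a(x,y),x)\cap V(b(x,y),y)$, and a point $x$ of $A$ has no reason to lie in any such piece: $x\in V(a(x,y),x)$ but in general $x\notin V(b(x,y),y)$, and all of these pairwise intersections may well be empty (for instance if the chosen families happen to Hausdorff-separate the points of $A$ from one another), in which case $G(S,g)=\emptyset$ and $A\subseteq\overline{G(S,g)}$ is unattainable. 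So the intended contradiction is never reached. Relatedly, your explanation of the second exponential (enlarging the stages to size $2^{2^{\kappa}}$ in order to ``freeze'' the index at $p$) is not supported by any step that actually requires ground sets of that size: as written, nothing stops one from running the identical recursion with stages of size $2^{\kappa}$, so the mechanism forcing $2^{2^{\kappa}}$ has not been identified. That mechanism is the triple partition relation above.
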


Theorem \ref{ex.3-Hausdorff} can be extended to any $n$-Haudorff space in the following way.

\begin{theorem}\rm\label{ex.n-Hausdorff} For a n-Hausdorff space X, $|X|\leq 2^{2^{\cdot^{\cdot^{2^{c(X) n-H\psi(X)}}}}}$, where the power is made ($n-1$)-many times.
\end{theorem}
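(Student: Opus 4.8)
The plan is to argue by induction on $n$, extending the method behind Theorem \ref{ex.3-Hausdorff}. Write $\exp_0(\kappa)=\kappa$ and $\exp_{k+1}(\kappa)=2^{\exp_k(\kappa)}$, so that the asserted bound reads $|X|\le \exp_{n-1}(\kappa)$ with $\kappa=c(X)\,n\hbox{-}H\psi(X)$. For $n=2$ the statement is the Hausdorff-pseudocharacter refinement $|X|\le 2^{c(X)H\psi(X)}=\exp_1(\kappa)$ of the Hajnal--Juh\'asz inequality recalled before Theorem \ref{ex.3-Hausdorff}, and the case $n=3$ is Theorem \ref{ex.3-Hausdorff} itself; either may serve as the base of the induction. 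Assuming the bound for $n-1$, I fix for every $x\in X$ a family $\{V(\alpha,x):\alpha<\kappa\}$ witnessing $n\hbox{-}H\psi(X)\le\kappa$, and aim to show $|X|\le \exp_{n-1}(\kappa)$.

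The engine is a closing-off argument, most conveniently phrased through an elementary submodel $M\prec H(\theta)$ with $X,\tau,\kappa\in M$, $\exp_{n-1}(\kappa)+1\subseteq M$, $|M|=\exp_{n-1}(\kappa)$, and $[M]^{\exp_{n-2}(\kappa)}\subseteq M$; such $M$ exists because $\bigl(\exp_{n-1}(\kappa)\bigr)^{\exp_{n-2}(\kappa)}=\exp_{n-1}(\kappa)$. The goal is $X\subseteq M$, which gives $|X|\le|M|=\exp_{n-1}(\kappa)$. Suppose instead $p\in X\setminus M$. The defining property of $n\hbox{-}H\psi$ rephrases as follows: for every $(n-1)$-tuple of distinct points $x_1,\dots,x_{n-1}\in X\cap M$ there are indices $\alpha_0,\dots,\alpha_{n-1}<\kappa$ with $V(\alpha_0,p)\cap\bigcap_{i=1}^{n-1}V(\alpha_i,x_i)=\emptyset$; equivalently, $p\notin \overline{\bigcap_{i=1}^{n-1}V(\alpha_i,x_i)}$. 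Thus each $(n-1)$-tuple from $X\cap M$ supplies an open set lying in $M$ whose closure omits $p$, and the contradiction will come from assembling enough of these sets to trap $X\cap M$ inside a closed set that still avoids $p$.

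The heart of the proof---and the step I expect to be hardest---is to convert this $(n-1)$-ary separation into such a contradiction, and it is precisely here that one exponential is gained. For $n=2$ the separation is unary: each single point of $X\cap M$ furnishes a basic neighbourhood whose closure misses $p$, and a maximal cellular refinement of size $\le c(X)\le\kappa$, reflected into $M$ because $[M]^{\kappa}\subseteq M$, already does the job. For $n\ge 3$ the separating indices $\alpha_0,\dots,\alpha_{n-1}$ depend on the whole tuple, so no single assignment of neighbourhoods to the points of $X\cap M$ works simultaneously; one must instead range over the configurations of separating data. The plan is to organise these configurations as the points of an auxiliary $(n-1)$-Hausdorff object $Y$ canonically attached to $X$ and to $p$---morally the space obtained by ``freezing the $p$-coordinate'' of the $n$-fold separation, whose $(n-1)$-Hausdorff pseudocharacter and cellularity are both $\le\kappa$---so that the inductive hypothesis yields $|Y|\le\exp_{n-2}(\kappa)$. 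Since the relevant families of open sets are then indexed by $Y$, closing $M$ off under $\exp_{n-2}(\kappa)$-sized subsets is exactly what is needed to reflect a suitable subfamily into $M$; a further cellularity reduction of size $\le\kappa$ then produces an open union whose closure contains $X\cap M$ but not $p$, the desired contradiction. The delicate point is the construction of $Y$ together with the verification that freezing the $p$-coordinate genuinely lowers the Hausdorff number by one while keeping cellularity and pseudocharacter below $\kappa$; granting this, the size bookkeeping forces precisely the passage from $\exp_{n-2}(\kappa)$ to $\exp_{n-1}(\kappa)$ and closes the induction.
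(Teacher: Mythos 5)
The paper offers no proof of this theorem: it is stated, without argument, as the evident extension of the cited result for $3$-Hausdorff spaces (Theorem \ref{ex.3-Hausdorff}, taken from \cite{B}). So there is nothing in the text to compare your approach against, and your proposal must stand on its own. As a strategy it points in a reasonable direction: an induction on $n$ gaining one exponential per step, implemented by a closing-off argument over sets of size $\exp_{n-2}(\kappa)$ (in your notation), with the correct base cases $n=2$ and $n=3$.

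Nevertheless there is a genuine gap, and it sits exactly where you yourself locate the heart of the proof. A first, local problem: from $V(\alpha_0,p)\cap\bigcap_{i=1}^{n-1}V(\alpha_i,x_i)=\emptyset$ you extract the open set $W=\bigcap_{i=1}^{n-1}V(\alpha_i,x_i)$ with $p\notin\overline{W}$, but $W$ need not contain any of the points $x_1,\dots,x_{n-1}$ (each $V(\alpha_i,x_i)$ is a neighbourhood only of $x_i$) and may well be empty, so it is not clear how such sets ``trap $X\cap M$'' inside a closed set missing $p$; the assembly mechanism is never specified. The more serious problem is that the entire inductive step is delegated to an auxiliary ``$(n-1)$-Hausdorff object $Y$'' obtained by ``freezing the $p$-coordinate'': you give no underlying set, no topology, and no verification of any of the three properties you need ($H(Y)\le n-1$, $c(Y)\le\kappa$, $(n-1)\hbox{-}H\psi(Y)\le\kappa$), and the conclusion is drawn ``granting this.'' Converting the $(n-1)$-ary separation into a contradiction is precisely the nontrivial content of the theorem --- it is what the proof of the $n=3$ case in \cite{B} actually has to accomplish --- so a proof that postulates this step is not yet a proof. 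To close the gap you would need either to construct $Y$ explicitly and prove the three bounds, or to bypass it and run the closing-off directly, e.g.\ by building an increasing chain of subsets of $X$ of length $\bigl(\exp_{n-2}(\kappa)\bigr)^{+}$, each of size at most $\exp_{n-1}(\kappa)$ and closed under the closures of $\exp_{n-2}(\kappa)$-sized families of the relevant finite intersections of basic neighbourhoods, and then using cellularity to derive the contradiction at the top of the chain.
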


Then we obtain the following example.

\begin{example}\rm\label{es}
	A Tychonoff (hence $n$-Hausdorff) space $X$ such that $n$-$\psi(X)< n\hbox{-}H\psi(X)$. 
\end{example}

Consider $n=3$. Let $\kappa >2^{2^\omega}$. Any dense subset $X$ of $2^{\kappa}$ with $|X|=\kappa$ and $\psi(X) = \omega$, is an example of a Tychonoff space such that $n$-$\psi(X)=\omega$ for any $n$ and, since $c(X)=\omega$, by Theorem \ref{ex.3-Hausdorff} we have that $3\hbox{-}H\psi(X)>\omega$. A similar proof can be done for every $n$ using Theorem \ref{ex.n-Hausdorff} and taking $\kappa >2^{2^{\cdot^{\cdot^{2^{\omega}}}}}$, where the power is made ($n-1$)-many times. $\hfill$$\triangle$

 \bigskip

Recall the following result

\begin{theorem}\rm \cite[2.3(a)]{J}
	Let $X$ be a $T_1$ space, then $|X|\leq nw(X)^{\psi(X)}$.
\end{theorem}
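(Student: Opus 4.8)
The plan is to code each point of $X$ by a function from $\psi(X)$ into a network of minimal size, and then to argue that this coding is injective. First I would set $\kappa=nw(X)$ and $\lambda=\psi(X)$, and fix a network $\mathcal{N}$ for $X$ with $|\mathcal{N}|=\kappa$. Since $X$ is $T_1$, the pseudocharacter is realised on singletons: for each $x\in X$ there is a family $\{U(\alpha,x):\alpha<\lambda\}$ of open neighbourhoods of $x$ with $\bigcap_{\alpha<\lambda}U(\alpha,x)=\{x\}$.

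The key step is then to replace these open sets by network elements. For each $x$ and each $\alpha<\lambda$, the defining property of a network yields some $N(\alpha,x)\in\mathcal{N}$ with $x\in N(\alpha,x)\subseteq U(\alpha,x)$. Because $N(\alpha,x)\subseteq U(\alpha,x)$ for every $\alpha$ while $x$ belongs to each $N(\alpha,x)$, we still have $\{x\}=\bigcap_{\alpha<\lambda}N(\alpha,x)$. I would then define $f_x\colon\lambda\to\mathcal{N}$ by $f_x(\alpha)=N(\alpha,x)$ and consider the assignment $x\mapsto f_x$.

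The decisive observation, and the part I expect to carry the whole argument, is that this assignment is injective: if $f_x=f_y$, then $\bigcap_{\alpha<\lambda}N(\alpha,x)=\bigcap_{\alpha<\lambda}N(\alpha,y)$, i.e. $\{x\}=\{y\}$, so $x=y$. Since the number of functions from $\lambda$ into $\mathcal{N}$ is $|\mathcal{N}|^{\lambda}=\kappa^{\lambda}$, injectivity immediately gives $|X|\leq nw(X)^{\psi(X)}$.

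Finally, I would emphasise that $T_1$ is used exactly once but essentially: it guarantees that the pseudocharacter is witnessed by singletons, which is precisely what makes the recovery $\{x\}=\bigcap_{\alpha<\lambda}f_x(\alpha)$ possible and hence forces injectivity of the coding. No genuine obstacle arises beyond checking that a network element can be chosen inside each $U(\alpha,x)$ while still containing $x$, which is immediate from the definition of a network; the argument is therefore entirely a matter of organising these choices and counting the resulting codes.
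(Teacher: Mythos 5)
Your proof is correct, and it is essentially the same argument the paper uses for its generalization (Theorem 2.9, the $n$-Hausdorff case): choose network elements $N(\alpha,x)\subseteq U(\alpha,x)$ containing $x$ and code each point by the resulting family, the only cosmetic difference being that you code by a function $\lambda\to\mathcal{N}$ while the paper codes by an element of $[\mathcal{N}]^{\leq\lambda}$, and in the $T_1$ case the fibers are singletons rather than of size $<n$. Nothing further is needed.
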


Now we prove (see Theorem \ref{nw} below) that in the previous result 
${\psi(X)}$ can be replaces by $n$-${\psi(X)}$ when $X$ is $n$-Hausdorff instead of $T_1$. Note that $n$-Hausdorffness and satisfying $T_1$ separation axiom are independent properties (see also \cite{B}). Indeed, consider the set $X=\omega\cup \{p\}\cup\{*\}$  topologized as follows: $\omega$ has the discrete topology, an open basic neighbourhood of $p$ is of the form $U_p=\{p\}\cup(\omega\setminus F)$ where $F$ is a finite subset of $\omega$ and an open neighbourhood of $*$ is of the form $U_*=\{*\}\cup \{p\}\cup(\omega\setminus  F)$, where $F$ is a finite subset of $\omega$. $X$ is a $3$-Hausdorff not $T_1$ space. Moreover, consider the set $Y=4\cup (4\times \omega)$, where $4=\{0,1,2,3\}$, topologized as follows: $4\times \omega$ has the discrete topology, a basic neighbourhood of $i=0,1,2,3$ is of the form $U_i=\{i\}\cup\{j\}\times(\omega\setminus F)$, where $F$ is a finite subset of $\omega$ and $j\not=i$. $Y$ is a $T_1$ not $3$-Hausdorff space.

\begin{theorem}\rm\label{nw}
	Let $X$ be an $n$-Hausdorff space, then $|X|\leq nw(X)^{n\hbox{-}\psi(X)}$.
\end{theorem}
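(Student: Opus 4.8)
The plan is to mimic the classical proof of $|X|\leq nw(X)^{\psi(X)}$ for $T_1$ spaces, replacing the single-point pseudocharacter machinery with the $n$-point version supplied by $n$-$\psi(X)$. Set $\kappa=nw(X)$ and $\lambda=n\text{-}\psi(X)$, and fix a network $\mathcal{N}$ with $|\mathcal{N}|\leq\kappa$. For each $x\in X$ choose the collection $\{V(\alpha,x):\alpha<\lambda\}$ of open neighbourhoods witnessing $n$-$\psi(X)\leq\lambda$, so that the associated $G_\lambda$-sets $A_x=\bigcap_{\alpha<\lambda}V(\alpha,x)$ have the property that any $n$ distinct points have pairwise-total intersection $A_{x_1}\cap\cdots\cap A_{x_n}=\emptyset$. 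The goal is to code each point $x$ by a small amount of network data and argue that this coding is at most $(n-1)$-to-one, which suffices since $\kappa^\lambda$ absorbs a finite multiplicity.

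The key step is the coding. For each $x$ and each $\alpha<\lambda$, since $V(\alpha,x)$ is open and contains $x$, pick a network element $N(\alpha,x)\in\mathcal{N}$ with $x\in N(\alpha,x)\subseteq V(\alpha,x)$. This assigns to $x$ a function $f_x:\lambda\to\mathcal{N}$, $f_x(\alpha)=N(\alpha,x)$, and the number of such functions is at most $\kappa^\lambda$. I would then show that for each fixed $f:\lambda\to\mathcal{N}$ the fiber $\{x:f_x=f\}$ has size at most $n-1$. Indeed, suppose $x_1,\dots,x_n$ are $n$ distinct points all coded by the same $f$. Then for every $\alpha<\lambda$ and every $i$, $x_i\in N(\alpha,x_i)=f(\alpha)\subseteq V(\alpha,x_i)$; in particular $f(\alpha)\subseteq V(\alpha,x_i)$ for all $i$, so any point in $f(\alpha)$ lies in $V(\alpha,x_i)$ for each $i$. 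The network elements need not contain all $n$ points simultaneously, so the cleaner route is to observe that each $x_j$ itself satisfies $x_j\in N(\alpha,x_i)=f(\alpha)$ for every $i$ and every $\alpha$ — hence $x_j\in V(\alpha,x_i)$ for all $i,\alpha$, giving $x_j\in\bigcap_{\alpha<\lambda}V(\alpha,x_i)=A_{x_i}$ for each $i$. Thus the fixed point $x_1$ (say) lies in $A_{x_1}\cap\cdots\cap A_{x_n}$, contradicting that this intersection is empty. Therefore no $n$ distinct points share a code, so each fiber has at most $n-1$ elements.

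Combining, the map $x\mapsto f_x$ is at most $(n-1)$-to-one, so $|X|\leq (n-1)\cdot|\mathcal{N}|^\lambda\leq\kappa^\lambda=nw(X)^{n\text{-}\psi(X)}$, using that $\kappa,\lambda\geq\omega$ makes the finite factor $n-1$ irrelevant.

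The main obstacle I anticipate is the fiber argument, specifically making precise the deduction ``$x_j\in f(\alpha)$ for all $i$''. What the coding literally gives is $x_i\in f(\alpha)\subseteq V(\alpha,x_i)$, i.e. the code records a neighbourhood of $x_i$; it does not a priori force a second point $x_j$ into $f(\alpha)$. The repair is to note $f_{x_i}=f_{x_j}=f$ means $N(\alpha,x_j)=f(\alpha)$, so $x_j\in N(\alpha,x_j)=f(\alpha)\subseteq V(\alpha,x_i)$, which is exactly the containment needed; thus each of the $n$ points lands in every $V(\alpha,x_i)$, and hence in $A_{x_i}$ for all $i$, producing the forbidden nonempty $n$-fold intersection. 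Verifying that this holds uniformly over all $i$ and all $\alpha<\lambda$ — and hence that a single witness point sits in all $n$ of the $G_\lambda$-sets — is the crux, but it is a direct unwinding of the definitions rather than a genuinely hard step.
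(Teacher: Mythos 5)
Your proof is correct and follows essentially the same route as the paper's: select network elements $N(\alpha,x)\in\mathcal{N}$ with $x\in N(\alpha,x)\subseteq V(\alpha,x)$, code each point by this assignment, and use the empty $n$-fold intersection of the sets $A_{x_i}$ to show the coding is at most $(n-1)$-to-one. The only cosmetic difference is that you code by the function $\alpha\mapsto N(\alpha,x)$ (giving $nw(X)^{n\hbox{-}\psi(X)}$ many codes) while the paper codes by the set $\{N(\alpha,x):\alpha<\kappa\}\in[\mathcal{N}]^{\leq\kappa}$; both yield the same bound and the same fiber argument.
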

\begin{proof}
	Let $\kappa=n\hbox{-}\psi(X)$ and let $\mathcal{N}$ be a network such that $|{\cal N}|= nw(X)$. \\
	Since $n\hbox{-}\psi(X)= \kappa$, for each $x\in X$ we can fix a collection $\{V(\alpha,x)\,:\, \alpha<\kappa\}$ of open neighbourhoods of $x$ such that if $x_1,...,x_n\in X$ are $n$ different points, then $\bigcap_{i=1}^n\bigcap\{ V(\alpha,x_i)\,:\,\alpha<\kappa\}=\emptyset$.
	Let $x\in X$. Then for every $\alpha<\kappa$ there exists $K(\alpha, x)\in \mathcal{N}$ such that $x\in K(\alpha,x)\subseteq V(\alpha,x)$. Denote by ${\cal K}(x)=\{K(\alpha,x)\,\,:\,\, \alpha<\kappa\}$. Consider the mapping $\phi:X\to [{\mathcal{N}}]^{\leq \kappa}$ defined by $x\mapsto{\cal K}(x)$. We want to prove that the cardinality of each fiber is strictly less than $n$. This will imply that $|X|\leq nw(X)^\kappa$. Let $x_1,...,x_n$ be $n$ different points in $X$. Then $\bigcap_{i=1}^n\bigcap \{V(\alpha,x_i) : \alpha <\kappa\}=\emptyset$, therefore $\bigcap_{i=1}^n\bigcap \{K(\alpha,x_i)\,:\alpha<\kappa \}=\emptyset$. So we have that there exists a $j\in\{1,..,n\}$ such that ${\cal K}(x_j)\not={\cal K}(x_i)$ for some $i\not=j$. 
\end{proof}

\begin{proposition}\rm \label{psiw} \cite[Proposition 2.3]{J}
	For a $T_1$ space $X$, $nw(X)\leq \psi w(X)^{l(X)}$.
\end{proposition}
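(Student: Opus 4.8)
The plan is to exhibit a network whose size is controlled by $\psi w(X)^{l(X)}$. Write $\kappa = \psi w(X)$ and $\lambda = l(X)$, and fix a $\psi$-base $\mathcal{B}$ with $|\mathcal{B}| = \kappa$. I would set
\[
\mathcal{N} = \Bigl\{\, X \setminus \textstyle\bigcup \mathcal{D} \;:\; \mathcal{D} \in [\mathcal{B}]^{\leq \lambda} \,\Bigr\},
\]
and since $|[\mathcal{B}]^{\leq \lambda}| \leq \kappa^{\lambda} = \psi w(X)^{l(X)}$, the whole task reduces to verifying that $\mathcal{N}$ is a network for $X$.

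The tempting first move --- applying the $\psi$-base at the point $x$ to write $X\setminus U$ as a union of the \emph{closed} sets $X\setminus B$ over those $B\in\mathcal{B}$ with $x\in B$, then thinning this family with $l(X)$ --- does not work, because the Lindel\"of number bounds subcovers only for \emph{open} covers. The main point, and the step I expect to be the crux, is therefore to produce an open cover instead. Given $x$ and an open $U\ni x$, for each $y\in X\setminus U$ we have $y\neq x$, so from $\{y\}=\bigcap\{B\in\mathcal{B}:y\in B\}$ (this is where the $T_1$ hypothesis, equivalently the defining property of the $\psi$-base, is used to separate $y$ from $x$) there is some $B_y\in\mathcal{B}$ with $y\in B_y$ and $x\notin B_y$. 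The family $\{B_y:y\in X\setminus U\}$ is now a genuine open cover of the closed set $X\setminus U$.

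To finish I would use that the Lindel\"of degree does not increase on closed subspaces: as $X\setminus U$ is closed, the cover above has a subfamily $\mathcal{D}=\{B_y:y\in Y_0\}$ with $|Y_0|\leq\lambda$ that still covers $X\setminus U$ (the degenerate case $U=X$ being handled by $\mathcal{D}=\emptyset$). Setting $N=X\setminus\bigcup\mathcal{D}\in\mathcal{N}$, we get $x\in N$ since $x\notin B_y$ for every $y\in Y_0$, and $N\subseteq U$ since $X\setminus U\subseteq\bigcup\mathcal{D}$. Thus every pair $(x,U)$ with $x\in U$ open is captured by a member of $\mathcal{N}$, so $\mathcal{N}$ is a network and $nw(X)\leq\kappa^{\lambda}=\psi w(X)^{l(X)}$.
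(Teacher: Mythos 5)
Your proof is correct. The paper gives no argument for this proposition --- it is quoted directly from Juh\'asz \cite[2.3]{J} --- and your construction (complements of unions of at most $l(X)$ many members of a $\psi$-base, with the key step of converting the closed cover coming from the $\psi$-base at $x$ into an open cover of $X\setminus U$ by choosing, for each $y\in X\setminus U$, a basic set containing $y$ but missing $x$) is exactly the standard proof of that result; the handling of the Lindel\"of degree on the closed set $X\setminus U$ and of the degenerate case $U=X$ is also sound.
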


Combining Propositions \ref{nw} and \ref{psiw} we obtain

\begin{corollary}\rm
	For a $T_1$ $n$-Hausdorff space $X$, $|X|\leq\psi w(X)^{l(X)n\hbox{-}\psi(X)}$.
\end{corollary}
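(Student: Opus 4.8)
The plan is to derive this directly by chaining the two results that immediately precede it, so essentially no new topological argument is needed. The hypothesis that $X$ is $T_1$ and $n$-Hausdorff is exactly what allows both earlier results to apply simultaneously: the $T_1$ assumption is what Proposition \ref{psiw} requires, while the $n$-Hausdorff assumption is what Theorem \ref{nw} requires. So the first thing I would do is simply record that both inequalities are available for our $X$.

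Concretely, I would first invoke Theorem \ref{nw}, which gives $|X|\leq nw(X)^{n\hbox{-}\psi(X)}$ because $X$ is $n$-Hausdorff. Next I would invoke Proposition \ref{psiw}, which gives $nw(X)\leq\psi w(X)^{l(X)}$ because $X$ is $T_1$. The remaining step is purely arithmetic: substitute the bound on $nw(X)$ into the exponential bound on $|X|$, obtaining
$$|X|\leq nw(X)^{n\hbox{-}\psi(X)}\leq\left(\psi w(X)^{l(X)}\right)^{n\hbox{-}\psi(X)}.$$
Then I would apply the standard cardinal exponentiation law $(\kappa^{\lambda})^{\mu}=\kappa^{\lambda\mu}$ to collapse the iterated exponent, yielding $|X|\leq\psi w(X)^{l(X)\,n\hbox{-}\psi(X)}$, which is the claimed inequality.

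The only point that warrants any care is the monotonicity used in the middle inequality: raising both sides of $nw(X)\leq\psi w(X)^{l(X)}$ to the fixed exponent $n\hbox{-}\psi(X)$ preserves the inequality, since cardinal exponentiation is monotone in the base. This is routine, and so I do not anticipate any genuine obstacle here; the work has already been done in establishing Theorem \ref{nw} and Proposition \ref{psiw}, and the corollary is just their formal composition. The substantive content — that the pseudocharacter in the network-weight bound can be replaced by $n$-$\psi$ for $n$-Hausdorff spaces — lives entirely in Theorem \ref{nw}, and this corollary merely re-expresses that bound in terms of pseudo-weight and Lindelöf number via the network-weight estimate.
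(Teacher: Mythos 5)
Your proposal is correct and follows exactly the paper's own route: the paper derives this corollary precisely by combining Theorem \ref{nw} with Proposition \ref{psiw} and collapsing the iterated exponent via $(\kappa^{\lambda})^{\mu}=\kappa^{\lambda\mu}$. Nothing is missing.
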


The authors were not able to find the source of the result following result and we give the proof for sake of completness.

\begin{proposition}\rm 
\label{J}
	For a Hausdorff space $X$, $\psi w(X)\leq d(X)^{t(X)}$.
\end{proposition}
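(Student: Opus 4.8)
The plan is to construct an explicit pseudobase of size at most $d(X)^{t(X)}$ out of complements of closures of small subsets of a fixed dense set. Let $D$ be a dense subset of $X$ with $|D| = d(X)$ and put $t = t(X)$. I would consider the family
$$\mathcal{B} = \{X \setminus \overline{A} : A \in [D]^{\leq t}\}$$
of open sets. Since each member is determined by a subset of $D$ of cardinality at most $t$, we have $|\mathcal{B}| \leq |[D]^{\leq t}| \leq d(X)^{t(X)}$, and because every cardinal function here is at least $\omega$ the term $+\omega$ in the definition of $\psi w$ is absorbed. Hausdorffness guarantees $X$ is $T_1$, so $\psi w(X)$ is defined. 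It then remains to check that $\mathcal{B}$ is a pseudobase, i.e. that $\bigcap\{B \in \mathcal{B} : x \in B\} = \{x\}$ for every $x \in X$.

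The core of the argument is to show that, for any two distinct points $x, y \in X$, some member of $\mathcal{B}$ contains $x$ but not $y$. First I would use the Hausdorff property to fix disjoint open sets $U \ni x$ and $V \ni y$. Since $D$ is dense, $D \cap V$ is dense in the open set $V$, and therefore $y \in \overline{D \cap V}$. Invoking the tightness bound $t(X) = t$, I can then select a set $A \in [D \cap V]^{\leq t}$ with $y \in \overline{A}$.

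The key point --- and the step I expect to carry the real content --- is that this same $A$ keeps $x$ out of its closure. Indeed $A \subseteq V$, and $U$ is an open neighbourhood of $x$ disjoint from $V$, hence disjoint from $A$; thus $x \notin \overline{A}$. Consequently $B = X \setminus \overline{A}$ belongs to $\mathcal{B}$, contains $x$, and omits $y$. This produces, for every $y \neq x$, a member of $\mathcal{B}$ through $x$ avoiding $y$, so $\bigcap\{B \in \mathcal{B} : x \in B\}$ collapses to $\{x\}$ and $\mathcal{B}$ is the required pseudobase. I expect the only delicate point to be this interplay of the three hypotheses: density places $y$ in the closure of $D \cap V$, tightness shrinks the witnessing set to size $\leq t$, and it is Hausdorffness (the disjointness of $U$ and $V$), rather than the a priori weaker condition $x \notin \overline{V}$, that forces $x \notin \overline{A}$.
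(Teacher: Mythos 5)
Your proof is correct and is essentially the paper's own argument: the same pseudobase $\{X\setminus\overline{A}\,:\,A\in[D]^{\leq t(X)}\}$, with density and tightness producing the small set $A$ and the Hausdorff separation keeping $x$ out of $\overline{A}$. Only your closing aside is slightly off: the ``a priori weaker'' condition $x\notin\overline{V}$ would in fact suffice, since $A\subseteq V$ gives $\overline{A}\subseteq\overline{V}$, and this is exactly the form of separation the paper invokes.
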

\begin{proof} 
	Let $D$ be a dense subset of $X$ such that $|D|\leq d(X)$. Let $\kappa=t(X)$ and ${\cal D}=\{\overline{A}: A\in [D]^{\leq\kappa}\}$. We have that $X=\overline{D}=\bigcup {\cal D}$ and $|{\cal D}|\leq d(X)^\kappa$. Put ${\cal B}=\{X\setminus \overline{A}: \overline{A}\in {\cal D}\}$. We will prove that it is a $\psi$-base. Let $x, y\in X$, then there exists an open subset $U$ of $X$ such that $x\not\in \overline{U} $ and $y\in U$. Since $D$ is dense and $t(X)\leq \kappa$, $y\in \overline{U\cap D}$ and there exists a subset $A\in [U\cap D]^{\leq \kappa}$ such that $y\in \overline{A}$. Therefore $y\not\in X\setminus \overline{A}$ and $x\in X\setminus \overline{A}$.
\end{proof}

\begin{question}\rm\label{Q} Is it true that $\psi w(X)\leq d(X)^{t(X)}$ 
	(or, at least, $\psi w(X)\leq d(X)^{t(X)l(X)n\hbox{-}\psi(X)}$), for an n-Hausdorff space $X$?
\end{question}

\begin{remark}\rm
	Note that, by Propositions \ref{nw} and \ref{psiw}, a positive answer to the previous question will allow us to prove that $|X|\leq d(X)^{t(X)l(X)n\hbox{-}\psi(X)}$ for every $T_1$ $n$-Hausdorff space $X$ and then to obtain a partial answer to the following question posed in \cite{B}.
\end{remark}

\begin{question}\rm\label{QB} \cite{B}
	Is it true that if $X$ is a $T_1$ $n$-Hausdorff space, then $|X|\leq2^{l(X)t(X)\psi(X)}$?
\end{question}
	
\section{$n$-closed-pseudocharacter of an $n$-Urysohn space} \label{S3}

In this section we introduce the following cardinal function which is a generalization of the closed pseudocharacter in the class of $n$-Urysohn spaces.
\begin{definition}\rm For a space $X$,
	the $n$-\emph{closed pseudocharacter	of} $X$ (where $n\geq 2$) denoted by $n$-$\psi_c(X)$, is the minimum infinite cardinal $\kappa$ such that for each point $x$ there is a collection $\{V(\alpha,x)\,:\, \alpha<\kappa\}$ of open neighbourhoods of $x$ such that if $x_1,...,x_n\in X$ are $n$ different points, then $\bigcap \{\overline{V(\alpha,x_1)}\,:\alpha<\kappa \}\cap.... \cap\bigcap \{\overline{V(\alpha,x_n)}\,:\alpha<\kappa \}=\emptyset$. Note that the $2$-closed pseudocharacter is defined for Hausdorff spaces while the $n$-closed pseudocharacter is defined for $n$-Urysohn spaces.
\end{definition}

It is straightforward to prove, as in Proposition \ref{cap-Hpsi(X)=psi(X)}, that the $2$-closed pseudocharacter is exactly the closed pseudocharacter.

\begin{proposition}\rm
	$2$-$\psi_c(X)= \psi_c(X)$, for every Hausdorff space $X$. 
\end{proposition}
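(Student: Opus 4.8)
The plan is to mirror the argument for Proposition~\ref{cap-Hpsi(X)=psi(X)}, adapting it from open neighbourhoods to closures of open neighbourhoods. The inequality $2\hbox{-}\psi_c(X)\leq\psi_c(X)$ is immediate: a family of open sets witnessing the closed pseudocharacter at each point (so that $\{x\}=\bigcap\{\overline{V}:V\ni x\}$) automatically satisfies the $2$-point separation condition in the definition of $n$-$\psi_c$. For the reverse inequality, I would fix for each $x\in X$ a collection $\{V(\alpha,x):\alpha<\kappa\}$ of open neighbourhoods of $x$ witnessing $2\hbox{-}\psi_c(X)=\kappa$, so that for any two distinct points $x,y$ we have $\bigcap_{\alpha<\kappa}\overline{V(\alpha,x)}\cap\bigcap_{\alpha<\kappa}\overline{V(\alpha,y)}=\emptyset$.

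The key step is to show $\{x\}=\bigcap_{\alpha<\kappa}\overline{V(\alpha,x)}$, which exhibits the same family as a witness for $\psi_c(X)\leq\kappa$. I would argue by contradiction exactly as in Proposition~\ref{cap-Hpsi(X)=psi(X)}: suppose some $y\neq x$ lies in $\bigcap_{\alpha<\kappa}\overline{V(\alpha,x)}$. Since $y\in\overline{V(\alpha,y)}$ for every $\alpha$ (each $V(\alpha,y)$ is a neighbourhood of $y$, so $y$ belongs to its closure), the point $y$ belongs to $\bigcap_{\alpha<\kappa}\overline{V(\alpha,y)}$ as well. Hence $y$ lies in the intersection $\bigcap_{\alpha<\kappa}\overline{V(\alpha,x)}\cap\bigcap_{\alpha<\kappa}\overline{V(\alpha,y)}$, contradicting that this set is empty. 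Therefore no such $y$ exists and the intersection collapses to $\{x\}$, giving $\psi_c(X)\leq\kappa=2\hbox{-}\psi_c(X)$.

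The only point that requires a moment's care—and the one I would flag as the mildest obstacle—is the observation that $y\in\overline{V(\alpha,y)}$ for all $\alpha$. This is trivial since $y\in V(\alpha,y)\subseteq\overline{V(\alpha,y)}$, but it is precisely the ingredient that makes the closure version of the argument work identically to the open version; the rest is a verbatim transcription of the earlier proof. Combining both inequalities yields $2\hbox{-}\psi_c(X)=\psi_c(X)$ for every Hausdorff space $X$.
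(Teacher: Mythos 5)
Your proof is correct and follows exactly the route the paper intends: the paper omits the argument, noting only that it proceeds ``as in Proposition \ref{cap-Hpsi(X)=psi(X)}'', and your adaptation --- replacing $V(\alpha,x)$ by $\overline{V(\alpha,x)}$ and using $y\in V(\alpha,y)\subseteq\overline{V(\alpha,y)}$ to derive the contradiction --- is precisely that transcription.
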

Also, it follows directly from the definitions that the following inequality holds.
\begin{proposition}\rm
	$n\hbox{-}\psi_c(X)\leq n\hbox{-}U\psi(X)$, for every $n$-Uryshon space $X$.
\end{proposition}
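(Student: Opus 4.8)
The plan is to show that the very same families of neighbourhoods that witness $n\hbox{-}U\psi(X)\leq\kappa$ already witness $n\hbox{-}\psi_c(X)\leq\kappa$, so that no new construction is required; the inequality will follow directly from the definitions, as the preceding remark in the excerpt suggests.

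Set $\kappa=n\hbox{-}U\psi(X)$. By definition of the $n$-Urysohn pseudocharacter, for each point $x\in X$ there is a collection $\{V(\alpha,x)\,:\,\alpha<\kappa\}$ of open neighbourhoods of $x$ with the property that, whenever $x_1,\dots,x_n\in X$ are $n$ distinct points, there exist indices $\alpha_1,\dots,\alpha_n<\kappa$ such that $\overline{V(\alpha_1,x_1)}\cap\cdots\cap\overline{V(\alpha_n,x_n)}=\emptyset$. I claim these same collections satisfy the condition in the definition of $n\hbox{-}\psi_c(X)$.

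The key step is a trivial containment. Fix $n$ distinct points $x_1,\dots,x_n$ and let $\alpha_1,\dots,\alpha_n<\kappa$ be the indices supplied by the $n\hbox{-}U\psi$ property for these points. For each $i\in\{1,\dots,n\}$ we have
$$\bigcap\{\overline{V(\alpha,x_i)}\,:\,\alpha<\kappa\}\subseteq\overline{V(\alpha_i,x_i)},$$
since $\alpha_i$ is one of the indices over which the intersection on the left is taken. Intersecting these inclusions over $i$ gives
$$\bigcap_{i=1}^{n}\bigcap\{\overline{V(\alpha,x_i)}\,:\,\alpha<\kappa\}\subseteq\overline{V(\alpha_1,x_1)}\cap\cdots\cap\overline{V(\alpha_n,x_n)}=\emptyset,$$
so the full intersection is empty. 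As $x_1,\dots,x_n$ were arbitrary distinct points, the collections $\{V(\alpha,x)\,:\,\alpha<\kappa\}$ witness that $n\hbox{-}\psi_c(X)\leq\kappa=n\hbox{-}U\psi(X)$.

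There is no real obstacle here: the only thing to verify is the displayed containment, which is immediate from the fact that the ``for all $\alpha$'' intersection defining $n\hbox{-}\psi_c$ is contained in any single closure picked out by the ``there exist $\alpha_1,\dots,\alpha_n$'' clause defining $n\hbox{-}U\psi$. In other words, the $n$-Urysohn pseudocharacter condition is formally stronger, asserting emptiness of a smaller intersection, and this forces emptiness of the larger one.
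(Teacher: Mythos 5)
Your proof is correct and matches the paper, which states this inequality as following directly from the definitions: a witnessing family for $n\hbox{-}U\psi(X)\leq\kappa$ automatically witnesses $n\hbox{-}\psi_c(X)\leq\kappa$ because the full intersection $\bigcap_{i=1}^{n}\bigcap\{\overline{V(\alpha,x_i)}:\alpha<\kappa\}$ is contained in $\overline{V(\alpha_1,x_1)}\cap\cdots\cap\overline{V(\alpha_n,x_n)}$. Nothing further is needed.
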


\begin{example}\rm
	A Tychonoff (hence $n$-Urysohn) space such that $n\hbox{-}\psi_c(X)< n\hbox{-}U\psi(X)$.
\end{example}

Consider the space $X$ of Example \ref{es}. Recall that pseudocharacter and closed pseudocharacter coincide for regular spaces and that $n$-$H\psi(Y) \leq n$-$U\psi(Y)$, for every space $Y. \hfill \triangle$

\bigskip

Recall the following result.

\begin{theorem}\rm\cite{BC}
	If $X$ is Hausdorff, then $|X|\leq d(X)^{\psi_c(X)t(X)}$.
\end{theorem}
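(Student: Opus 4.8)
The plan is to avoid elementary submodels and instead bound $|X|$ directly: first use the tightness of $X$ to cover it by the closures of the small subsets of a fixed dense set, and then estimate the size of each such closure by an injective \emph{trace} argument that exploits the closed pseudocharacter. Write $\kappa = \psi_c(X)t(X)$ and fix a dense set $D$ with $|D| = d(X)$. Since $2\hbox{-}\psi_c$ coincides with $\psi_c$, for each $x \in X$ I may fix open neighbourhoods $\{V(\alpha,x):\alpha<\kappa\}$ of $x$ with $\bigcap_{\alpha<\kappa}\overline{V(\alpha,x)} = \{x\}$. The first step is the standard remark that, because $t(X)\le\kappa$ and $\overline{D}=X$, one has $X = \bigcup\{\overline{B} : B \in [D]^{\le\kappa}\}$; there are at most $d(X)^\kappa$ sets $B$ in this union, so it will suffice to show $|\overline B|\le 2^\kappa$ for each fixed $B \in [D]^{\le\kappa}$, since then $|X|\le d(X)^\kappa\cdot 2^\kappa = d(X)^\kappa$.

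For the main step, fix $B \in [D]^{\le\kappa}$ and set $Y = \overline B$, so that $B$ is dense in $Y$ and $|B|\le\kappa$. I would define a map $\varphi\colon Y \to \left(\mathcal P(B)\right)^{\kappa}$ by $\varphi(y)(\alpha) = V(\alpha,y)\cap B$ and prove it is injective, which gives $|Y|\le \left(2^{\kappa}\right)^{\kappa} = 2^\kappa$ as required. The key topological fact is that for any set $O$ open in $Y$ one has $\overline{O}^{\,Y} = \overline{O\cap B}^{\,Y}$, since $B$ is dense in $Y$. Hence if $\varphi(y) = \varphi(y')$, then $V(\alpha,y)\cap B = V(\alpha,y')\cap B$ for every $\alpha$, and applying this identity to the $Y$-open sets $V(\alpha,y)\cap Y$ and $V(\alpha,y')\cap Y$ yields $\overline{V(\alpha,y)\cap Y}^{\,Y} = \overline{V(\alpha,y')\cap Y}^{\,Y}$ for all $\alpha<\kappa$. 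As $y \in V(\alpha,y)\cap Y$, this forces $y \in \bigcap_{\alpha<\kappa}\overline{V(\alpha,y')\cap Y}^{\,Y}$; and since closures computed in the subspace $Y$ satisfy $\overline{A}^{\,Y}=\overline{A}\cap Y$, this intersection is contained in $\left(\bigcap_{\alpha<\kappa}\overline{V(\alpha,y')}\right)\cap Y = \{y'\}\cap Y = \{y'\}$. Therefore $y = y'$, proving injectivity.

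Combining the two steps gives $|X| \le d(X)^\kappa = d(X)^{\psi_c(X)t(X)}$. The routine points to verify are the covering identity $X=\bigcup\{\overline B : B\in[D]^{\le\kappa}\}$ and the cardinal arithmetic $2^\kappa\le d(X)^\kappa$ and $\left(2^\kappa\right)^\kappa = 2^\kappa$. The step I expect to require the most care is the injectivity of $\varphi$: everything hinges on correctly localizing the closed-pseudocharacter witnesses of $X$ to the subspace $Y=\overline B$ and on the dense-trace identity $\overline{O}^{\,Y}=\overline{O\cap B}^{\,Y}$, which together let the traces $V(\alpha,y)\cap B$ recorded on the \emph{small} set $B$ recover the whole point $y$. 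This is precisely where tightness earns its place: restricting to the closure of a $\le\kappa$-sized piece replaces the crude global estimate $2^{d(X)\psi_c(X)}$, obtained by tracing on all of $D$ at once, with the sharper bound $d(X)^{\psi_c(X)t(X)}$.
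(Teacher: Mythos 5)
Your proof is correct. Note that the paper does not actually prove this statement --- it is quoted from Bella--Cammaroto --- but it does prove the generalization $|X|\leq d(X)^{n\hbox{-}\psi_c(X)wt(X)}$ via the coding argument of Theorem \ref{new} and Theorem \ref{gen}, and your argument is the $n=2$, ordinary-tightness instance of the same basic idea: use tightness to locate each point in the closure of a $\leq\kappa$-sized subset of a dense set, then recover the point from the traces of its closed-pseudocharacter witnesses on that small set. The organizational difference is real but minor. The paper codes globally: each $p$ gets its own small set $A_p\in[S]^{\leq\kappa}$ with $p\in\overline{A_p\cap V}$ for every $V\in\mathcal{V}_p$, and the map $p\mapsto\{A_p\cap V:V\in\mathcal{V}_p\}$ into $[[S]^{\leq\kappa}]^{\leq\kappa}$ is shown to be $(n-1)$-to-$1$. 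You instead decompose first, writing $X=\bigcup\{\overline{B}:B\in[D]^{\leq\kappa}\}$, and bound each piece $Y=\overline{B}$ by $2^{\kappa}$ using the full trace $\alpha\mapsto V(\alpha,y)\cap B$ together with the dense-trace identity $\overline{O}^{Y}=\overline{O\cap B}^{Y}$; your injectivity argument is clean and all the subspace-closure manipulations check out. Your version is arguably more transparent for the Hausdorff case and isolates the reusable fact $|\overline{B}|\leq 2^{\psi_c(X)|B|}$, while the paper's global map is what allows the replacement of $t$ by the weak tightness $wt$ (where one cannot simply cover $X$ by closures of small subsets of $D$ and must route through $\mathcal{C}$-saturated sets) and the passage to $(n-1)$-to-$1$ maps for $n$-Urysohn spaces.
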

In \cite{C} Carlson generalized the previous result introducing the following weaker form of tightness. 

\begin{definition}\rm\cite{C}
	Let $X$ be a space. The weak tightness $wt(X)$ of $X$ is defined as the least infinite cardinal $\kappa$ for which there is a cover ${\cal C}$ of $X$ such that $|{\cal C}|\leq 2^{\kappa}$ and for all $C\in \mathcal C$, $t(C)\leq \kappa$ and $X=\bigcup_{B\in [C]^{\leq 2^\kappa}}\overline{B}$. 
\end{definition}

It is clear that $wt(X)\leq t(X)$.

\begin{theorem}\rm\cite{C}
	If $X$ is Hausdorff, then $|X|\leq d(X)^{\psi_c(X)wt(X)}$.
\end{theorem}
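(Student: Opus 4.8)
The plan is to prove the bound by an elementary submodel (equivalently, a closing-off) argument in the spirit of the Bella--Cammaroto inequality $|X|\le d(X)^{\psi_c(X)t(X)}$, replacing the \emph{global} use of tightness by the piecewise control that weak tightness provides. Write $\kappa=\psi_c(X)wt(X)$ and $\theta=d(X)^{\kappa}$; since $\theta^{\kappa}=\theta$ and $2^{\kappa}\le\theta$, I would fix a large regular cardinal $\vartheta$ and an elementary submodel $M\prec H(\vartheta)$ that is $\kappa$-closed (that is, ${}^{\kappa}M\subseteq M$), satisfies $|M|=\theta$ and $\theta\subseteq M$, and contains $X,\tau$, a dense set $D$ with $|D|\le d(X)$, a cover $\mathcal C$ witnessing $wt(X)\le\kappa$ (so $|\mathcal C|\le 2^{\kappa}\le\theta$, each $t(C)\le\kappa$, and $X=\bigcup_{B\in[C]^{\le 2^{\kappa}}}\overline B$), and the assignment sending each $x$ to a closed pseudobase $\{V(\alpha,x):\alpha<\kappa\}$ with $\bigcap_{\alpha<\kappa}\overline{V(\alpha,x)}=\{x\}$.

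Because $\theta\subseteq M$ and $D,\mathcal C\in M$ have cardinality $\le\theta$, elementarity gives $D\subseteq M$ and $\mathcal C\subseteq M$. As $D$ is dense and $D\subseteq X\cap M$, we get $X=\overline D\subseteq\overline{X\cap M}$, so the entire theorem reduces to the single assertion that $X\cap M$ is closed: this would yield $X=\overline{X\cap M}=X\cap M\subseteq M$ and hence $|X|\le|M|=\theta$. The closed pseudobases are the engine that forces points into $M$: if $p\in X\setminus M$ and $x\in X\cap M$, then $p\neq x$, so from $\bigcap_{\alpha<\kappa}\overline{V(\alpha,x)}=\{x\}$ there is an $\alpha(x)<\kappa$ with $p\notin\overline{V(\alpha(x),x)}$, separating $p$ from $x$ by a basic closed set named in $M$.

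To verify closedness I would take $p\in\overline{X\cap M}$ and locate it through the cover: since $\mathcal C\subseteq M$ covers $X$, there is $C\in\mathcal C$ with $p\in\overline B$ for some $B\in[C]^{\le 2^{\kappa}}$. Inside the subspace $C$ the tightness is only $\kappa$, so closures relative to $C$ are witnessed by subsets of size $\le\kappa$; as $M$ is $\kappa$-closed, such witnesses lie in $M$, and then elementarity together with the separation by closed pseudobases forces $p\in M$. The \textbf{main obstacle} is exactly this closedness step. In the Bella--Cammaroto argument, global tightness $t(X)\le\kappa$ lets one reduce any closure point to a $\le\kappa$-sized witness already in $M$; here the covering property only supplies witnesses $B$ of size up to $2^{\kappa}$, and one cannot take $M$ to be $2^{\kappa}$-closed without enlarging it to cardinality $\theta^{2^{\kappa}}=d(X)^{2^{\kappa}}$, which would yield only the weaker bound $|X|\le d(X)^{2^{\kappa}}$. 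Reconciling these two scales -- using $t(C)\le\kappa$ to shrink the $2^{\kappa}$-sized witnesses coming from the cover down to the $\kappa$-sized witnesses that the $\kappa$-closed model $M$ can absorb, while checking that the reduction remains valid for closure points $p$ that need not lie in $C$ itself -- is the delicate technical heart of the proof, and is precisely the difficulty that the definition of weak tightness is engineered to overcome.
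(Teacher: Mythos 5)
Your outline correctly reduces the bound to the right step and even points at the exact place where it breaks, but it does not close the gap: you never explain how to shrink the $2^{\kappa}$-sized witnesses $B\in[C]^{\le 2^{\kappa}}$ supplied by the cover down to the $\kappa$-sized witnesses that your $\kappa$-closed model can absorb, and you acknowledge this yourself by calling it ``the delicate technical heart of the proof'' without supplying it. The missing idea is the one the paper imports from Juh\'asz and van Mill (Lemma \ref{saturated}): for every $x$ there is a $\mathcal{C}$-\emph{saturated} set $S_x\in[X]^{\le 2^{\kappa}}$ containing $x$, i.e.\ a set whose trace $S_x\cap C$ is dense in $S_x$ for every $C\in\mathcal{C}$. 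The paper's proof (of the more general Theorem \ref{gen}, which gives your statement at $n=2$) sets $S=\bigcup_{d\in D}S_d$ for a dense $D$; then $S$ is dense, $\mathcal{C}$-saturated, and $|S|\le d(X)^{\kappa}$. For any $x$, choosing $C\in\mathcal{C}$ with $x\in C$, the set $S\cap C$ is dense in $S$ and hence dense in $X$, so $x\in\overline{V(\alpha,x)\cap S\cap C}^{\,C}$ (note that $x$ itself lies in $C$, which resolves your worry about ``closure points that need not lie in $C$''), and $t(C)\le\kappa$ then produces $A_\alpha\in[V(\alpha,x)\cap S\cap C]^{\le\kappa}$ with $x\in\overline{A_\alpha}$. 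This is precisely the reconciliation of scales you left open, and it is not automatic in your setup: there is no reason why $X\cap M\cap C$ should be dense near a given closure point $p$ for the particular $C$ containing $p$, so relative tightness in $C$ cannot be applied to traces on $X\cap M$ without first arranging saturation.

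Two further remarks. The paper finishes not with a closedness argument but with a counting map (Theorem \ref{new}): from $X=\bigcup_{A\in[S]^{\le\kappa}}\overline{A}$ and the closed pseudobases one gets an injection $p\mapsto\{A_p\cap V:V\in\mathcal{V}_p\}$ into $[[S]^{\le\kappa}]^{\le\kappa}$, whence $|X|\le(d(X)^{\kappa})^{\kappa}=d(X)^{\kappa}$. Your submodel endgame could be made to work in the same spirit (via $\{p\}=\bigcap_{\alpha<\kappa}\overline{A_\alpha}$ being an element of $M$ once the sequence $(A_\alpha)_{\alpha<\kappa}$ of $\kappa$-sized sets lies in $M$), but as written the phrase ``the separation by closed pseudobases forces $p\in M$'' hides the fact that the complement of a union of $\kappa$ many closed sets need not be open, so the naive separation argument does not literally run. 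With the saturation lemma added and the endgame repaired, your architecture would go through; without them, the proposal is a plan rather than a proof.
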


We prove a variation (Theorem \ref{gen} below) of the previous theorem in the class of $n$-Urysohn spaces. First we prove the following general result.

\begin{theorem}\rm\label{new}
	Let $\kappa$ and $\lambda$ be two infinite cardinals. Let $X$ be an n-Urysohn space such that
	\begin{itemize}
		\item[1.] for every $p\in X$ there exists a family ${\mathcal V}_p$ of open neighbourhoods of $p$ such that $|{\mathcal V}_p|\leq \kappa$ and for every $x_1,...,x_n\in X$, $(\bigcap_{V\in {\mathcal V}_{x_1}} \overline{V})\cap.... \cap(\bigcap_{V\in {\mathcal V}_{x_n}} \overline{V})=\emptyset$.
		\item[2.] there exists a subset $S$ of $X$ such that $|S|\leq \lambda$ and $X=\bigcup_{A\in [S]^{\leq\kappa}}\overline{A}$.
	\end{itemize}
	Then $|X|\leq \lambda^\kappa$.
\end{theorem}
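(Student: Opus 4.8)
The plan is to run a standard closing-off (elementary submodel-style, but carried out by hand) argument that builds an increasing chain of subsets of $X$ of size at most $\lambda^\kappa$ and shows its union is all of $X$. First I would fix, for every $p\in X$, the family ${\mathcal V}_p$ from hypothesis~1, and fix the set $S$ from hypothesis~2 with $|S|\le\lambda$. The key closure operation attaches to each point the ``closed pseudo-base'' trace $\bigcap_{V\in{\mathcal V}_p}\overline V$, which by hypothesis~1 has the crucial separation property: any $n$ distinct points cannot all lie in the intersection of each other's traces. The target cardinality $\lambda^\kappa$ will appear because each stage will be a union over $[\,\cdot\,]^{\le\kappa}$-many closures of $\kappa$-sized subsets of $S$, and $\lambda^\kappa\cdot(\lambda^\kappa)^\kappa=\lambda^\kappa$.

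Concretely, I would construct an increasing continuous chain $\{H_\gamma:\gamma<\kappa^+\}$ of subsets of $X$, each of size $\le\lambda^\kappa$, as follows. Start by letting $H_0$ be any subset meeting the relevant traces; at successor steps, given $H_\gamma$, for each choice of fewer than $n$ points $x_1,\dots,x_{n-1}\in H_\gamma$ and each way of selecting finitely many neighbourhoods from their families whose closures, together, fail to cover $X$, I would throw into $H_{\gamma+1}$ a witnessing point from $X\setminus$(that finite union of closures), plus—this is where hypothesis~2 enters—enough points of $S$ so that $H_{\gamma+1}$ is ``$\theta$-closed'' in the sense that $H_{\gamma+1}=\bigcup_{A\in[H_{\gamma+1}\cap S]^{\le\kappa}}\overline A\cap H_{\gamma+1}$, keeping the size bounded by $\lambda^\kappa$ using that there are at most $(\lambda^\kappa)^{<n}\cdot\kappa^{<\omega}=\lambda^\kappa$ many such tasks. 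At limit stages take unions. Set $H=\bigcup_{\gamma<\kappa^+}H_\gamma$; then $|H|\le\kappa^+\cdot\lambda^\kappa=\lambda^\kappa$, and $H$ is closed: it satisfies $H=\bigcup_{A\in[H\cap S]^{\le\kappa}}\overline A$ and absorbs all the witnessing points.

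The final step is to show $H=X$, which is where the $n$-Urysohn hypothesis does its real work. Suppose toward a contradiction that there is $q\in X\setminus H$. I would argue that $q$ must lie in the trace $\bigcap_{V\in{\mathcal V}_p}\overline V$ of at most finitely many—indeed at most $n-1$—of the ``already handled'' points, for otherwise, taking $n$ distinct points $p_1,\dots,p_n\in H$ all of whose traces contain $q$ would let $q$ sit in $(\bigcap_{V\in{\mathcal V}_{p_1}}\overline V)\cap\dots\cap(\bigcap_{V\in{\mathcal V}_{p_n}}\overline V)$, contradicting hypothesis~1. Having bounded how many traces can swallow $q$, I would use the elementarity/closing-off of the construction together with hypothesis~2 (the $\theta$-density of $H\cap S$) to produce, inside $H$, a finite family of closed neighbourhoods whose union separates $q$ from $H$ yet whose complement-witness was already required to enter some $H_{\gamma+1}$, forcing $q\in H$.

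\textbf{Main obstacle.} The delicate point will be the bookkeeping in the successor step: arranging that the ``at most $n-1$ traces containing $q$'' bound interacts correctly with the $\theta$-closure coming from hypothesis~2, so that a genuine covering contradiction is produced rather than merely a $2$-point (Hausdorff) separation. In the classical Hausdorff ($n=2$) argument one separates $q$ from a single accumulated point; here the $n$-Urysohn condition only forbids $n$ simultaneous traces, so I expect the core difficulty to be showing that the failure of $q$ to be captured by $\le n-1$ traces, combined with the weak-tightness-type covering from hypothesis~2, still yields a finite subfamily whose closures cover a neighbourhood of $q$ inside $H$—i.e.\ upgrading ``finitely many traces miss $q$'' to an actual contradiction with $q\notin H$.
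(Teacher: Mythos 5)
Your proposal has a genuine gap, and it is exactly the one you flag yourself as the ``main obstacle'': the closing-off scheme never reaches a contradiction. In an Arhangel'skii-style argument the final step needs a covering property (Lindel\"of, almost Lindel\"of, $wL$, \dots) to extract, from the neighbourhood assignments of points of $H$, a \emph{finite} (or $\le\kappa$-sized) subfamily whose closures cover enough of $X$ that the point $q\notin H$ is trapped. Neither hypothesis of the theorem supplies such a property: hypothesis~1 is only a closed-pseudobase-type separation condition, and hypothesis~2 is a $\theta$-density/weak-tightness-type condition. Your observation that $q$ can lie in the trace $\bigcap_{V\in\mathcal{V}_p}\overline V$ of at most $n-1$ points of $H$ is correct, but it does not combine with hypothesis~2 to ``produce a finite family of closed neighbourhoods whose union separates $q$ from $H$''; no such family is available, and the successor-step bookkeeping you describe (choosing witnesses outside finite unions of closures) has nothing to certify that the witnesses eventually exhaust $X$. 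So the argument as proposed would stall at precisely the step you identify.

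The theorem does not need a closing-off argument at all; the intended proof is a direct counting via an $(n-1)$-to-$1$ map. For each $p\in X$ pick $A_p\in[S]^{\leq\kappa}$ with $p\in\overline{A_p}$ (hypothesis~2); since each $V\in\mathcal{V}_p$ is an open neighbourhood of $p$, one gets $p\in\overline{A_p\cap V}$, and $A_p\cap V\in[S]^{\leq\kappa}$. Define $\phi(p)=\{A_p\cap V: V\in\mathcal{V}_p\}\in[[S]^{\leq\kappa}]^{\leq\kappa}$. If $n$ distinct points $x_1,\dots,x_n$ had $\phi(x_1)=\dots=\phi(x_n)$, then for every $i$ and every $V'\in\mathcal{V}_{x_i}$ there is $V\in\mathcal{V}_{x_1}$ with $A_{x_1}\cap V=A_{x_i}\cap V'$, whence $x_1\in\overline{A_{x_1}\cap V}=\overline{A_{x_i}\cap V'}\subseteq\overline{V'}$; thus $x_1$ lies in all $n$ traces, contradicting hypothesis~1. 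Hence each fiber of $\phi$ has fewer than $n$ points and $|X|\leq|[[S]^{\leq\kappa}]^{\leq\kappa}|\leq(\lambda^\kappa)^\kappa=\lambda^\kappa$. Your instinct to use the traces and the ``at most $n-1$'' phenomenon is the right one, but it should be applied to count fibers of a map into $[[S]^{\leq\kappa}]^{\leq\kappa}$, not to drive a transfinite closing-off.
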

\begin{proof}
	For every $p\in X$ fix $A_p\in[S]^{\leq\kappa}$ such that $p\in\overline{A}_p$. For every $V\in {\mathcal V}_p$, we have that $p\in\overline{A_p\cap V}$ and $A_p\cap V\in [S]^{\leq\kappa}$. Define the function $\phi: X\to [[S]^{\leq\kappa}]^{\leq\kappa}$ such that $\phi(p)=\{A_p\cap V: V\in{\mathcal V}_p\}$ for every $p\in X$. We will show that $\phi$ is a $n-1$ to $1$ map; this implies that $|X|\leq |[[S]^{\leq\kappa}]^{\leq\kappa}|\leq (|S|^\kappa)^\kappa\leq \lambda^\kappa$. Assume, by contradiction, there exist $x_1,...,x_n\in X$ such that $\phi(x_1)=...=\phi(x_n)$. Then $\{A_{x_1}\cap V: V\in{\mathcal V}_{x_1}\}=...=\{A_{x_n}\cap V: V\in{\mathcal V}_{x_n}\}$. Therefore $(\bigcap_{V\in {\mathcal V}_{x_1}} \overline{V})\cap.... \cap(\bigcap_{V\in {\mathcal V}_{x_n}} \overline{V})\not=\emptyset$; a contradiction.
\end{proof}

In \cite{JVM2018}, Juh{\'a}sz and van Mill introduced the notion of a $\mathcal{C}$-saturated subset of a space $X$. 
\begin{definition}\rm
Given a cover $\mathcal{C}$ of $X$, a subset $A\subseteq X$ is $\mathcal{C}$-\emph{saturated} if $A\cap C$ is dense in $A$ for every $C\in\mathcal{C}$. 
\end{definition}
It is clear that the union of $\mathcal{C}$-saturated subsets is $\mathcal{C}$-saturated. The following is given in~\cite{JVM2018} in the case $\kappa=\omega$, and extended to the general case in~\cite{C}.

\begin{lemma}\rm\cite{JVM2018,C}\label{saturated}
Let $X$ be a space, $wt(X)=\kappa$, and let $\mathcal{C}$ be a cover witnessing that $wt(X)=\kappa$. Then for all $x\in X$ there exists $S_x\in[X]^{\leq 2^\kappa}$ such that $x\in S_x$ and $S_x$ is $\mathcal{C}$-saturated.
\end{lemma}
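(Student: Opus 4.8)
The plan is to prove Lemma~\ref{saturated} by a transfinite recursion of length $\kappa^+$ that builds an increasing chain of subsets of $X$, each of size at most $2^\kappa$, whose union is the desired $\mathcal{C}$-saturated set $S_x$. The guiding idea is that being $\mathcal{C}$-saturated is a ``closure'' condition: for each $C\in\mathcal{C}$ we need $A\cap C$ to be dense in $A$, i.e.\ every point $a\in A$ must lie in $\overline{A\cap C}$. Since $wt(X)=\kappa$ is witnessed by $\mathcal{C}$, each $C\in\mathcal{C}$ satisfies $t(C)\leq\kappa$, so density inside $C$ can be tested using subsets of size at most $\kappa$. The recursion repeatedly adds, for every point already present and every $C\in\mathcal{C}$, a small witnessing set that pushes that point into the closure of $A\cap C$, and one shows the cardinality stays controlled while saturation is achieved in the limit.

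First I would set $S_0=\{x\}$. At a successor stage, given $S_\eta$ with $|S_\eta|\leq 2^\kappa$, I would enlarge it as follows: for each $p\in S_\eta$ and each $C\in\mathcal{C}$ with $p\in\overline{C}$, use $t(C)\leq\kappa$ to choose a set $B(p,C)\in[C]^{\leq\kappa}$ with $p\in\overline{B(p,C)}$, and let $S_{\eta+1}=S_\eta\cup\bigcup\{B(p,C):p\in S_\eta,\ C\in\mathcal{C}\}$. At limit stages take unions. The key cardinality bookkeeping is that $|\mathcal{C}|\leq 2^\kappa$ and each $B(p,C)$ has size $\leq\kappa$, so each successor step adds at most $2^\kappa\cdot 2^\kappa\cdot\kappa=2^\kappa$ points; since the recursion has length $\kappa^+$ and $(2^\kappa)^{\kappa^+}$-style sums collapse, the running union $S_x=\bigcup_{\eta<\kappa^+}S_\eta$ still has $|S_x|\leq 2^\kappa\cdot\kappa^+=2^\kappa$.

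Next I would verify that $S_x$ is $\mathcal{C}$-saturated. Fix $C\in\mathcal{C}$ and a point $a\in S_x$; I must show $a\in\overline{S_x\cap C}$. Since $\kappa^+$ is regular and $a$ entered at some stage $\eta<\kappa^+$, at stage $\eta+1$ a set $B(a,C)\in[C]^{\leq\kappa}$ with $a\in\overline{B(a,C)}$ was added, provided $a\in\overline{C}$; and $B(a,C)\subseteq S_x\cap C$, giving $a\in\overline{S_x\cap C}$. The one subtlety is the case $a\notin\overline{C}$, where density of $S_x\cap C$ in $S_x$ at the point $a$ is required only relative to $\overline{A}$ --- here one checks directly that if $a\notin\overline{C}$ then $a$ has a neighbourhood missing $C$, so the condition ``$a\in\overline{S_x\cap C}$'' is vacuous in the relative topology on the relevant piece, matching the definition of $\mathcal{C}$-saturation as density of $A\cap C$ in $A$.

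The main obstacle I anticipate is the cardinal arithmetic at the limit of the recursion combined with verifying that no new ``defects'' of saturation are created by points added at late stages. A single pass through all current points does not suffice, because the witnesses $B(p,C)$ introduce fresh points that themselves need witnesses; this is exactly why the recursion must run to length $\kappa^+$ (a regular cardinal above $\kappa$) rather than stopping after one step, and why one invokes regularity of $\kappa^+$ to guarantee every point and the cofinally-many stages it spawns are captured below $\kappa^+$. Keeping $|S_x|\leq 2^\kappa$ throughout hinges on the identity $(2^\kappa)\cdot\kappa^+=2^\kappa$, which holds since $\kappa^+\leq 2^\kappa$; this is the delicate computation I would state carefully but not belabor, as the structure of the argument is otherwise a standard closing-off construction.
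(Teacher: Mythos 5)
Your overall strategy --- a closing-off recursion that repeatedly adds, for each point already present and each $C\in\mathcal{C}$, a small subset of $C$ whose closure captures that point --- is the right skeleton for this lemma (which the paper itself only cites from \cite{JVM2018,C}). But the step that produces the witnessing sets is justified by the wrong hypothesis, and this is a genuine gap. You invoke $t(C)\leq\kappa$ to find $B(p,C)\in[C]^{\leq\kappa}$ with $p\in\overline{B(p,C)}$ for every $p\in S_\eta$ with $p\in\overline{C}$. Tightness of the subspace $C$ only controls closures of subsets of $C$ at points \emph{of} $C$: if $p\notin C$, the fact that $p\in\overline{C}$ gives you no subset of $C$ of size $\leq\kappa$ whose closure contains $p$ (a dense discrete subspace has countable tightness, yet points outside it may need arbitrarily many of its points to be approached). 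The ingredient you actually need is the third clause in the definition of $wt(X)=\kappa$, namely $X=\bigcup_{B\in[C]^{\leq 2^\kappa}}\overline{B}$ for every $C\in\mathcal{C}$; this hands you, for \emph{every} $p\in X$, a set $B(p,C)\in[C]^{\leq 2^\kappa}$ with $p\in\overline{B(p,C)}$, and since $|\mathcal{C}|\leq 2^\kappa$ and $|B(p,C)|\leq 2^\kappa$ the cardinality bookkeeping still closes at $2^\kappa$ per stage.

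The same clause also disposes of your worried case $a\notin\overline{C}$: it forces $X\subseteq\overline{C}$, so every member of $\mathcal{C}$ is dense in $X$ and that case never occurs. That is fortunate, because your proposed resolution of it is incorrect as stated: $\mathcal{C}$-saturation of $A$ requires $a\in\overline{A\cap C}$ for every $a\in A$ and every $C\in\mathcal{C}$ (density of $A\cap C$ in the subspace $A$ is exactly $A\subseteq\overline{A\cap C}$), so if some $a\in A$ had a neighbourhood missing $C$ the condition would simply fail, not become vacuous. Finally, a minor point: the recursion need only run for $\omega$ steps, since each point entering at stage $n$ receives all of its witnesses at stage $n+1$ and those witnesses remain subsets of $S_x\cap C$ forever after; the length-$\kappa^+$ recursion and the identity $2^\kappa\cdot\kappa^+=2^\kappa$ are harmless but unnecessary.
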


\begin{theorem}\rm\label{gen}
	If $X$ is an $n$-Urysohn space, then $|X|\leq d(X)^{n\hbox{-}\psi_c(X)wt(X)}$.
\end{theorem}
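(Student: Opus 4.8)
The plan is to derive the theorem as a direct application of the general machinery in Theorem~\ref{new}, taking $\kappa = n\hbox{-}\psi_c(X)\cdot wt(X)$ and $\lambda = d(X)$. If I can verify that an $n$-Urysohn space satisfies the two hypotheses of Theorem~\ref{new} with these cardinals, then the conclusion $|X|\leq \lambda^\kappa = d(X)^{n\hbox{-}\psi_c(X)wt(X)}$ follows immediately. So the work consists entirely of establishing hypotheses (1) and (2) of that theorem.

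Hypothesis~(1) is essentially the definition of $n$-$\psi_c(X)$. Writing $\mu = n\hbox{-}\psi_c(X)$, for each $p\in X$ the definition supplies a family $\{V(\alpha,p):\alpha<\mu\}$ of open neighbourhoods of $p$ of size $\leq\mu\leq\kappa$ such that for any $n$ distinct points $x_1,\dots,x_n$ the intersection $\bigcap_\alpha\overline{V(\alpha,x_1)}\cap\cdots\cap\bigcap_\alpha\overline{V(\alpha,x_n)}$ is empty. Setting $\mathcal{V}_p = \{V(\alpha,p):\alpha<\mu\}$ gives exactly the required property; the only point to note is that $|\mathcal{V}_p|\leq\mu\leq\kappa$ as needed, since the product cardinal $\kappa$ dominates $\mu$.

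Hypothesis~(2) is where the weak tightness and the density enter, and this will be the main obstacle. The goal is to produce a set $S\subseteq X$ with $|S|\leq d(X)\leq\lambda$ and $X=\bigcup_{A\in[S]^{\leq\kappa}}\overline{A}$. First I would fix a dense set $D$ with $|D|\leq d(X)$ and a cover $\mathcal{C}$ witnessing $wt(X)\leq\kappa$, so each $C\in\mathcal{C}$ has $t(C)\leq wt(X)\leq\kappa$ and $X=\bigcup_{B\in[C]^{\leq 2^{wt(X)}}}\overline B$. The key tool is Lemma~\ref{saturated}: each $x\in X$ lies in a $\mathcal{C}$-saturated set $S_x\in[X]^{\leq 2^{wt(X)}}$. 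The strategy is to combine $D$ with the saturated-set machinery so that every point of $X$ is captured in the closure of some $\leq\kappa$-sized subset of a set $S$ built from $D$. The delicate part is getting the bound $|S|\leq d(X)$ rather than something like $d(X)^{\kappa}$: one wants to take $S$ to be a suitable dense-type subset (for instance, intersecting $D$ with the saturated sets, or taking closures of small subsets of $D$ within each $C\in\mathcal{C}$ and using saturation to recover arbitrary points), and then argue via the tightness bound $t(C)\leq\kappa$ inside each piece of the cover that each point of $X$ is in the closure of a $\kappa$-sized subset of $S$.

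I expect the verification of~(2) to require the most care, since one must interleave three facts — density of $D$, the $\mathcal{C}$-saturated sets from Lemma~\ref{saturated}, and the tightness estimate $t(C)\leq\kappa$ on each member of the cover — while keeping $|S|$ pinned at $d(X)$ and the indexing sets at size $\leq\kappa$. Once~(2) is in hand, the theorem is just the instantiation of Theorem~\ref{new}. A secondary point to check is the cardinal arithmetic reconciling $2^{wt(X)}$-sized saturated sets with the exponent $\kappa = n\hbox{-}\psi_c(X)\cdot wt(X)$ in the final bound, but since $wt(X)\leq\kappa$ this absorbs cleanly and should not present a genuine difficulty.
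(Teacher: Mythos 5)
Your overall route is the same as the paper's: instantiate Theorem~\ref{new} with $\kappa = n\hbox{-}\psi_c(X)\cdot wt(X)$, get hypothesis~(1) straight from the definition of $n\hbox{-}\psi_c(X)$, and get hypothesis~(2) from a dense set $D$, the $\mathcal{C}$-saturated sets of Lemma~\ref{saturated}, and the tightness bound $t(C)\leq\kappa$ on the members of the witnessing cover. Hypothesis~(1) you handle correctly.

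There is, however, a genuine gap in your treatment of hypothesis~(2), and it is precisely the point you flag as ``the delicate part'': you have set yourself the wrong target. You insist on producing a set $S$ with $|S|\leq d(X)$, i.e.\ on taking $\lambda = d(X)$, and you never say how to achieve this. In fact it is doubtful that you can: the natural candidate is $S=\bigcup_{d\in D}S_d$ where each $S_d$ is the $\mathcal{C}$-saturated set from Lemma~\ref{saturated}, and this only gives $|S|\leq |D|\cdot 2^{\kappa}$, which exceeds $d(X)$ whenever $2^{\kappa}>d(X)$; there is no evident way to shrink a $\mathcal{C}$-saturated dense set back down to size $d(X)$. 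The paper's resolution is to abandon that target entirely: take $S=\bigcup_{d\in D}S_d$, accept $|S|\leq d(X)\cdot 2^{\kappa}\leq d(X)^{\kappa}$, and apply Theorem~\ref{new} with $\lambda = d(X)^{\kappa}$, since then $\lambda^{\kappa}=(d(X)^{\kappa})^{\kappa}=d(X)^{\kappa}$ is still the desired bound. (Your closing remark about $2^{wt(X)}$ ``absorbing cleanly'' because $wt(X)\leq\kappa$ conflates absorption in the exponent with the size of $S$ itself; these are different issues, and only the choice $\lambda=d(X)^{\kappa}$ resolves the latter.) With that choice made, the rest of the verification is the chain you gesture at but do not carry out: $S$ is $\mathcal{C}$-saturated as a union of $\mathcal{C}$-saturated sets and dense because $D\subseteq S$, hence $S\cap C$ is dense in $X$ for each $C\in\mathcal{C}$; therefore $V(\alpha,x)\subseteq\overline{V(\alpha,x)\cap S\cap C}$, and $t(C)\leq\kappa$ yields $A_\alpha\in[V(\alpha,x)\cap S\cap C]^{\leq\kappa}$ with $x\in\overline{A_\alpha}$; finally $A_x=\bigcup_{\alpha<\kappa}A_\alpha$ witnesses $X=\bigcup_{A\in[S]^{\leq\kappa}}\overline{A}$.
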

\begin{proof}
	Put $\kappa=n$-$\psi_c(X)wt(X)$. Let $\mathcal C$ be a cover witnessing that $wt(X)\leq\kappa$ and $D$ be a dense subset of $X$ such that $|D|=d(X)$.  By Lemma~\ref{saturated}, for all $x\in X$ there exists a $\mathcal{C}$-saturated set $S_x$ such that $x\in S_x$ and $|S_x|\leq 2^\kappa$. Let $S=\bigcup_{d\in D}S_d$. Then $S$ is $\mathcal{C}$-saturated, as $S$ is the union of $\mathcal{C}$-saturated sets, and $|S|\leq |D|\cdot 2^\kappa\leq|D|^\kappa$. Observe that as $D\subseteq S$, we have that $S$ is dense in $X$. Since $n$-$\psi_c(X)\leq\kappa$, for every $p\in X$ fix a collection $\{V(\alpha,p)\,:\, \alpha\leq\kappa\}$ of open neighbourhoods of $p$ such that if $x_1,...,x_n\in X$ are $n$ different points, then $\bigcap \{\overline{V(\alpha,x_1)}\,:\alpha<\kappa \}\cap.... \cap\bigcap \{\overline{V(\alpha,x_n)}\,:\alpha<\kappa \}=\emptyset$.
	Fix $x\in X$ and let $C\in\mathcal{C}$ such that $x\in C$. We will show that for each $\alpha<\kappa$ we have $x\in \overline{V(\alpha,x)\cap S\cap C}^{C}$.
	As $S$ is $\mathcal{C}$-saturated (hence $S\cap C$ is dense in $S$) and $S$ is dense in $X$, we have $S\cap C$ is dense in $X$ and then
	$x\in V(\alpha,x) \subseteq \overline{V(\alpha,x)}\subseteq \overline{V(\alpha,x)\cap C\cap S}.$
Therefore $x\in C\cap \overline{V(\alpha,x)\cap S\cap C}=\overline{V(\alpha,x)\cap S\cap C}^{C}$. As $t(C)\leq\kappa$, there exists $A_\alpha\subseteq V(\alpha,x)\cap S\cap C$ such that $x\in \overline{A_\alpha}^{C}\subseteq\overline{A_\alpha}$ and $|A_\alpha|\leq\kappa$. Now let $A_x=\bigcup_{\alpha<\kappa}A_\alpha$ and note $x\in\overline{A_x}$, $|A_x|\leq\kappa\cdot\kappa=\kappa$, and $A_x\subseteq S$.

Unfixing $x$, we see that $X=\bigcup_{x\in X}\overline{A_x}=\bigcup_{A\in[S]^{\leq\kappa}}\overline{A}$. Letting $\lambda=d(X)^\kappa$, we see that the conditions of Theorem \ref{new} hold and thus $|X|\leq \lambda^\kappa= (d(X)^\kappa)^\kappa=d(X)^\kappa$. 
\end{proof}

\begin{corollary}\rm\label{d}
	If $X$ is an $n$-Urysohn space, then $|X|\leq d(X)^{n\hbox{-}\psi_c(X)t(X)}$.
\end{corollary}
Since $n\hbox{-}\psi_c(X)t(X)\leq \chi(X)$, we can obtain the following corollary.

\begin{corollary}\rm\label{chi}
	If $X$ is an $n$-Urysohn space, then $|X|\leq d(X)^{\chi(X)}$.
\end{corollary}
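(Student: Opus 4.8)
The plan is to deduce the statement directly from Corollary~\ref{d}, which already gives $|X|\leq d(X)^{n\hbox{-}\psi_c(X)t(X)}$ for every $n$-Urysohn space $X$. The only thing left is the purely cardinal-arithmetic estimate $n\hbox{-}\psi_c(X)\,t(X)\leq\chi(X)$; once this is in hand, monotonicity of exponentiation in the exponent yields $d(X)^{n\hbox{-}\psi_c(X)t(X)}\leq d(X)^{\chi(X)}$, which is exactly the desired bound. So the whole argument reduces to verifying two inequalities and multiplying them.

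The bound $t(X)\leq\chi(X)$ is standard and holds for every space, so the substantive step is to check $n\hbox{-}\psi_c(X)\leq\chi(X)$. For this I would, at each point $x\in X$, fix a local base $\mathcal{B}_x$ with $|\mathcal{B}_x|\leq\chi(X)$ and take the witnessing collection $\{V(\alpha,x):\alpha<\chi(X)\}$ to be an enumeration of $\mathcal{B}_x$. Given $n$ distinct points $x_1,\dots,x_n$, the hypothesis that $X$ is $n$-Urysohn (that is, $U(X)\leq n$) provides open neighbourhoods $U_i\ni x_i$ with $\bigcap_{i=1}^n\overline{U_i}=\emptyset$. Since $\mathcal{B}_{x_i}$ is a local base, each $U_i$ contains some $B_i\in\mathcal{B}_{x_i}$, so $\overline{B_i}\subseteq\overline{U_i}$ and hence $\bigcap_{B\in\mathcal{B}_{x_i}}\overline{B}\subseteq\overline{B_i}\subseteq\overline{U_i}$. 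Intersecting over $i$ gives $\bigcap_{i=1}^n\bigcap_{B\in\mathcal{B}_{x_i}}\overline{B}\subseteq\bigcap_{i=1}^n\overline{U_i}=\emptyset$, which witnesses $n\hbox{-}\psi_c(X)\leq\chi(X)$.

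Finally, since $\chi(X)\geq\omega$ and both $n\hbox{-}\psi_c(X)$ and $t(X)$ are bounded by $\chi(X)$, their product $n\hbox{-}\psi_c(X)\,t(X)$ is again at most $\chi(X)$, and the corollary follows from Corollary~\ref{d}. There is no real obstacle in this proof: the only point demanding a moment of care is the $n$-Urysohn step, where one passes from arbitrary witnessing neighbourhoods to basic ones and uses that shrinking a neighbourhood shrinks its closure, so that the empty intersection is preserved after replacing the $U_i$ by members of the local bases.
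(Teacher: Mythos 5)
Your proof is correct and follows exactly the paper's route: the paper derives Corollary~\ref{chi} from Corollary~\ref{d} via the remark that $n\hbox{-}\psi_c(X)t(X)\leq\chi(X)$, which is precisely your reduction. The only difference is that you spell out the verification of $n\hbox{-}\psi_c(X)\leq\chi(X)$ (shrinking the witnessing neighbourhoods to basic ones), which the paper leaves implicit; that verification is correct.
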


Considering Theorem \ref{BCMP}, it is natural to pose the following question.
Recall that the $\theta$-closure of a subset $A$ of a space $X$, denoted by $cl_\theta(A)$, is the subset $\{x\in X: \overline{U}\cap A\not= \emptyset \hbox{ for every open neighbourhood } U \hbox{ of }x \}$ and the $t_\theta (X)$ is the minimum cardinal number $\kappa$ such that for every $x\in X$ and every subset $A$ of $X$ such that $x\in cl_\theta(A)$ there exists a subset $B$ such that $x\in cl_\theta(B)$ and $|B|\leq \kappa$. 
\begin{question}\rm
	Is it true that $|X|\leq d_{\theta}(X)^{n\hbox{-}\psi_c(X)t_\theta(X)}$ for every $n$-Urysohn space $X$?
\end{question}

\bigskip

Now we give an $n$-Urysohn generalization of the following Willard-Dissanayake result.

\begin{theorem}\rm \cite{WD} If $X$ is a Hausdorff space then $|X|\leq d(X)^{\pi\chi(X)\psi_c(X)}$.
\end{theorem}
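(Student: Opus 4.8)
The plan is to derive the bound from Theorem~\ref{new} in the case $n=2$. Set $\kappa=\pi\chi(X)\psi_c(X)$ and $\lambda=d(X)$, and fix a dense set $D\subseteq X$ with $|D|=d(X)$. I will take $S=D$ in Theorem~\ref{new} and verify its two hypotheses with these parameters; the conclusion $|X|\leq\lambda^\kappa=d(X)^{\pi\chi(X)\psi_c(X)}$ then follows at once.

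Condition~1 is just closed pseudocharacter. For each $p\in X$, since $\psi_c(X)\leq\kappa$, I fix a family $\{V(\alpha,p):\alpha<\psi_c(X)\}$ of open neighbourhoods of $p$ with $\{p\}=\bigcap_{\alpha}\overline{V(\alpha,p)}$, and let $\mathcal V_p$ denote this family, so that $|\mathcal V_p|\leq\kappa$. For two distinct points $x,y$ one then has $(\bigcap_{V\in\mathcal V_x}\overline V)\cap(\bigcap_{V\in\mathcal V_y}\overline V)=\{x\}\cap\{y\}=\emptyset$, which is precisely Condition~1 read with $n=2$.

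The substance lies in Condition~2, and this is where $\pi$-character enters (in place of tightness, as in the earlier inequalities of this section). For each $x\in X$ I fix a local $\pi$-base $\{P_\xi:\xi<\pi\chi(X)\}$ of nonempty open sets at $x$; by density of $D$ I choose $d_\xi\in P_\xi\cap D$ for each $\xi$ and put $A_x=\{d_\xi:\xi<\pi\chi(X)\}$. Then $A_x\in[D]^{\leq\kappa}$, and $x\in\overline{A_x}$: given any open $W\ni x$, the $\pi$-base property yields $\xi$ with $P_\xi\subseteq W$, whence $d_\xi\in P_\xi\cap D\subseteq W\cap A_x$. Unfixing $x$ gives $X=\bigcup_{A\in[D]^{\leq\kappa}}\overline A$, which is Condition~2 with $\lambda=d(X)$.

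With both hypotheses in hand, Theorem~\ref{new} yields $|X|\leq\lambda^\kappa=d(X)^{\pi\chi(X)\psi_c(X)}$. The single point requiring care---and the main obstacle to a purely mechanical citation---is that Theorem~\ref{new} is stated for $n$-Urysohn spaces, and for $n=2$ this means Urysohn rather than merely Hausdorff. However, inspection of the proof of Theorem~\ref{new} shows that Urysohn separation is never actually used: only Conditions~1 and~2 enter, so the map $\phi$ constructed there remains well defined and, for $n=2$, injective on a Hausdorff space, giving $|X|\leq|[[D]^{\leq\kappa}]^{\leq\kappa}|\leq d(X)^\kappa$. If one prefers not to lean on that observation, the same finite-to-one argument can instead be reproduced directly here for Hausdorff $X$.
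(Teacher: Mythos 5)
Your proof is correct, but it takes a different route from the paper. The paper does not actually prove this statement---it is cited from Willard--Dissanayake---and the closest argument it contains is its proof of the $n$-Urysohn generalization stated immediately afterwards, which proceeds by a direct construction: for each $x$ and each $\alpha<\kappa$, a point of $D$ is chosen in every member of the local $\pi$-base that lies inside $V(\alpha,x)$, producing sets $D(\alpha,x)\subseteq V(\alpha,x)\cap D$ with $x\in\overline{D(\alpha,x)}$, and then $x\mapsto\{D(\alpha,x):\alpha<\kappa\}$ is shown to be an $(n-1)$-to-one map into $[[D]^{\leq\kappa}]^{\leq\kappa}$. You instead factor through Theorem~\ref{new}, verifying Condition~1 from $\psi_c$ (for distinct $x,y$, $\bigcap_{V\in\mathcal{V}_x}\overline{V}\cap\bigcap_{V\in\mathcal{V}_y}\overline{V}=\{x\}\cap\{y\}=\emptyset$) and Condition~2 from the $\pi$-base plus density; both verifications are sound. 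The delicate point is the one you flag yourself: Theorem~\ref{new} is stated for $n$-Urysohn spaces, and at $n=2$ a mechanical citation would only give the theorem for Urysohn spaces, strictly weaker than the Hausdorff statement. Your observation that the proof of Theorem~\ref{new} uses nothing beyond Conditions~1 and~2 is accurate---the coding map $\phi(p)=\{A_p\cap V: V\in\mathcal{V}_p\}$ and the final contradiction invoke only those hypotheses---and it is exactly what makes your route legitimate. As for what each approach buys: yours unifies this bound with Theorem~\ref{gen} under a single lemma and exposes that the Urysohn hypothesis in Theorem~\ref{new} is redundant, thereby recovering the full Hausdorff result from the paper's machinery; the paper's direct construction is self-contained and delivers the $n$-Urysohn version with $n$-$\psi_c$ in one pass, which its $n=2$ instance alone would not give you for merely Hausdorff spaces.
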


\begin{theorem}\rm If $X$ is an $n$-Urysohn space then $|X|\leq d(X)^{\pi\chi(X)n\hbox{-}\psi_c(X)}$.
\end{theorem}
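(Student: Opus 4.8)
The plan is to reduce the statement directly to Theorem \ref{new}, verifying its two hypotheses with $\lambda=d(X)$ and $\kappa=\pi\chi(X)\,n\hbox{-}\psi_c(X)$. First I would set $\kappa=\pi\chi(X)\,n\hbox{-}\psi_c(X)$ and fix a dense subset $D\subseteq X$ with $|D|=d(X)$. The whole point is that Theorem \ref{new} already packages the counting machinery (the $(n-1)$-to-one map into $[[S]^{\leq\kappa}]^{\leq\kappa}$ together with the $n$-Urysohn separation), so all that remains is to produce its inputs from $\pi$-character and density.

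Hypothesis $1$ is essentially free. Since $n\hbox{-}\psi_c(X)\leq\kappa$, for each $p\in X$ the defining collection $\{V(\alpha,p):\alpha<n\hbox{-}\psi_c(X)\}$ of open neighbourhoods of $p$ has cardinality $\leq\kappa$ and satisfies, for any $n$ distinct points $x_1,\dots,x_n$, that $\bigcap_\alpha\overline{V(\alpha,x_1)}\cap\cdots\cap\bigcap_\alpha\overline{V(\alpha,x_n)}=\emptyset$. Taking $\mathcal{V}_p=\{V(\alpha,p):\alpha<n\hbox{-}\psi_c(X)\}$ gives precisely the family demanded by Hypothesis $1$.

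The heart of the argument is verifying Hypothesis $2$ with $S=D$, and this is exactly where $\pi$-character does the work that weak tightness did in Theorem \ref{gen}. For each $x\in X$ I would fix a local $\pi$-base $\mathcal{P}_x$ at $x$ with $|\mathcal{P}_x|\leq\pi\chi(X)\leq\kappa$, and for each $P\in\mathcal{P}_x$ choose a point $d_P\in P\cap D$, which is possible since $P$ is a nonempty open set and $D$ is dense. Setting $A_x=\{d_P:P\in\mathcal{P}_x\}$ gives $A_x\in[D]^{\leq\kappa}$. To see $x\in\overline{A_x}$, take any open neighbourhood $U$ of $x$; by the $\pi$-base property there is $P\in\mathcal{P}_x$ with $P\subseteq U$, whence $d_P\in U\cap A_x$. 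Thus every $x$ lies in $\overline{A_x}$ for some $A_x\in[D]^{\leq\kappa}$, so $X=\bigcup_{A\in[D]^{\leq\kappa}}\overline{A}$, which is Hypothesis $2$ with $\lambda=d(X)$.

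With both hypotheses established, Theorem \ref{new} yields $|X|\leq\lambda^\kappa=d(X)^{\pi\chi(X)\,n\hbox{-}\psi_c(X)}$, as desired. I do not expect a genuine obstacle here: the separation-and-counting core is entirely delegated to Theorem \ref{new}, and the single new observation is the one above, namely that $\pi$-character lets one capture each point in the closure of a $\leq\kappa$-sized subset of a fixed dense set. The only point requiring minor care is the bookkeeping of the cardinal $\kappa$, ensuring that both $n\hbox{-}\psi_c(X)\leq\kappa$ (for Hypothesis $1$) and $\pi\chi(X)\leq\kappa$ (for Hypothesis $2$) hold simultaneously, which is automatic since $\kappa$ is their product.
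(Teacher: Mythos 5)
Your proof is correct. You take a genuinely different, though closely related, route from the paper: you reduce the statement to the general counting lemma (Theorem \ref{new}), verifying its two hypotheses with $S=D$ a dense set, $\lambda=d(X)$ and $\kappa=\pi\chi(X)\,n\hbox{-}\psi_c(X)$, whereas the paper gives a direct, self-contained argument that does not invoke Theorem \ref{new} at all. Concretely, the paper fixes for each $x$ and each $\alpha<\kappa$ the subfamily of the local $\pi$-base consisting of members contained in $V(\alpha,x)$, picks one point of $D$ from each, and so builds a set $D(\alpha,x)\subseteq V(\alpha,x)\cap D$ of size at most $\kappa$ with $x\in\overline{D(\alpha,x)}\subseteq\overline{V(\alpha,x)}$; it then checks directly that $x\mapsto\{D(\alpha,x):\alpha<\kappa\}$ is an $(n-1)$-to-one map into $[[D]^{\leq\kappa}]^{\leq\kappa}$. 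Your argument produces the same objects in two stages: a single set $A_x\in[D]^{\leq\kappa}$ with $x\in\overline{A_x}$, obtained from the full local $\pi$-base, which the proof of Theorem \ref{new} then intersects with each $V(\alpha,x)$ to yield the sets $A_x\cap V(\alpha,x)$ playing the role of $D(\alpha,x)$. The combinatorial core --- the $(n-1)$-to-one map and the use of the closed $n$-pseudobase to reach a contradiction --- is identical in both versions; what your route buys is economy and uniformity, since it exhibits this theorem and Theorem \ref{gen} as two instances of the same abstract lemma, with the only theorem-specific input being the observation that a local $\pi$-base of size $\pi\chi(X)$ lets one capture every point in the closure of a $\leq\kappa$-sized subset of a fixed dense set.
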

\begin{proof}
Let $\kappa=\pi\chi(X)n\hbox{-}\psi_c(X)$ and let $D$ be a dense set such that $|D|=d(X)$. For all $x\in X$, let $\{V(\alpha,x): \alpha<\kappa\}$ be a closed $n$-pseudobase at $x$ and let ${\cal U}_x$ be a local $\pi$-base at $x$ such that $|{\cal U}_x|\leq\kappa$. Fix $x\in X$. Let ${\cal U}(\alpha,x)=\{U\in{\cal U}_x:U\subseteq V(\alpha,x)\}$. For all $U\in{\cal U}(\alpha,x)$, let $d(U,\alpha,x)\in U\cap D$. Let $D(\alpha,x)=\{d(U,\alpha,x):U\in{\cal U}_x\}$. Note $|D(\alpha,x)|\leq |{\cal U}_x|\leq\kappa$. We show that $x\in \overline{D(\alpha,x)}$ for every $\alpha<\kappa$. Let $x\in W$, where $W$ is open. Then $x\in W\cap V(\alpha,x)$ for every $\alpha<\kappa$. As ${\cal U}_x$ is a local $\pi$-base at $x$, there exists $U_\alpha\in{\cal U}_x$ such that $U_\alpha\subseteq W\cap V(\alpha,x)$ and thus $U_\alpha\in{\cal U}(\alpha,x)$ for every $\alpha<\kappa$. It follows that $d(U_\alpha,\alpha,x)\in U_\alpha\subseteq W\cap V(\alpha,x)$ and thus $W\cap D(\alpha,x)\not=\emptyset$. This shows that $x\in \overline{D(\alpha,x)}\subseteq\overline{V(\alpha,x)}$ for every $\alpha<\kappa$. Consider the function $\phi: X\to [[D]^{\leq \kappa}]^{\leq\kappa}$ such that for each $x\in X$, $\phi(x)=\{D(\alpha,x):\alpha<\kappa\}$. This is a $(n-1)\hbox{-}1$ map, in fact, suppose, by contradiction, that there exist $x_1,...,x_n$ different points in $X$ such that $\{D(\alpha,x_1)\}_{\alpha<\kappa}=...=\{D(\alpha,x_n)\}_{\alpha<\kappa}$. Since $\bigcap_{\alpha<\kappa}\overline{D(\alpha,x_i)}$ is not empty for each $i=1,...,n$, $\bigcap_{\alpha<\kappa}\overline{D(\alpha,x_1)}=...=\bigcap_{\alpha<\kappa}\overline{D(\alpha,x_n)}=E$ and $\bigcap_{\alpha<\kappa}\overline{D(\alpha,x_i)}\subseteq \bigcap_{\alpha<\kappa}\overline{V(\alpha,x_i)}$ for every $i=1,...,n$, we have that $\emptyset\not=E\subseteq \bigcap_{i=1}^{n}\bigcap_{\alpha<\kappa}\overline{V(\alpha,x_i)}$. That is a contradiction. 
\end{proof}

\bigskip

Recall the following result.

\begin{theorem}\rm\cite{Sun}\label{Sun}
	If X is a Hausdorff space, then
	$|X| \leq  \pi\chi(X)^{c(X)\psi_c(X)}$.
\end{theorem}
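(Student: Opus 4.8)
The plan is to prove the inequality by the elementary submodel method, the standard tool for \v{S}apirovski\u{\i}-type cardinality bounds. Write $\kappa=c(X)\psi_c(X)$ and $\lambda=\pi\chi(X)$; the goal is $|X|\le\lambda^\kappa$. For every $x\in X$ I fix once and for all two pieces of data: a family $\{V(\alpha,x):\alpha<\kappa\}$ of open neighbourhoods of $x$ with $\bigcap_{\alpha<\kappa}\overline{V(\alpha,x)}=\{x\}$ (available since $\psi_c(X)\le\kappa$), and a local $\pi$-base $\mathcal B_x$ at $x$ with $|\mathcal B_x|\le\lambda$. I then take an elementary submodel $M\prec H(\theta)$, for $\theta$ large and regular, with $X,\tau,\kappa,\lambda\in M$, $\lambda\subseteq M$, closed under $\kappa$-sequences (so $[M]^{\le\kappa}\subseteq M$, hence also $\kappa\subseteq M$), and $|M|=\lambda^\kappa$; this is possible because $(\lambda^\kappa)^\kappa=\lambda^\kappa$. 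The whole theorem reduces to the single claim $X\subseteq M$, since then $|X|\le|M|=\lambda^\kappa$.

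The argument rests on two ingredients. The first is the cellularity lemma: for any family $\mathcal U$ of open sets there is $\mathcal U_0\in[\mathcal U]^{\le c(X)}$ with $\overline{\bigcup\mathcal U_0}=\overline{\bigcup\mathcal U}$, proved by recursively choosing members of $\mathcal U$ meeting the complement of the closure of what has been chosen so far, the nonempty increments forming a cellular family and so stopping before $c(X)^+$. The second is the reflection of the closed pseudobases: assuming toward a contradiction that some $p\in X\setminus M$ exists, for each $x\in X\cap M$ elementarity places a family as above inside $M$, and since every index $\alpha<\kappa$ lies in $M$ and $M$ is $\kappa$-closed, each $V(\alpha,x)$ belongs to $M$; as $p\neq x$ there is $\alpha$ with $p\notin\overline{V(\alpha,x)}$, so the resulting open sets lie in $M$, cover $X\cap M$, and have closures missing $p$. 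Applying the cellularity lemma inside $M$ and reflecting, I obtain an open $W\in M$, a union of $\le\kappa$ of these, with $X\cap M\subseteq\overline W$; by elementarity this forces $\overline W=X$, so that $p\in\overline W$.

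Here lies the main obstacle. Knowing $p\notin\overline{V(\alpha,x)}$ for each of the $\le\kappa$ chosen sets yields only $p\notin\bigcup\overline{V(\alpha,x)}$, a union of closed sets, whereas the cellularity reduction controls $\overline{\bigcup V(\alpha,x)}=\overline W$, the closure of the union; the two need not coincide, so $p\in\overline W$ is not yet a contradiction. I expect this closure-of-a-union versus union-of-closures gap to be the delicate point, and it is exactly where $\pi$-character must enter: rather than separating $p$ from one large union, I would localize, using local $\pi$-bases to replace neighbourhoods by $\pi$-base members lying in $M$, so that each cellular reduction is carried out among at most $\lambda$ candidates per point. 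This keeps the base of the final exponentiation equal to $\pi\chi(X)$ and the exponent equal to $c(X)\psi_c(X)$, and upgrades the density statement $\overline W=X$ into the genuine conclusion $p\in M$, contradicting $p\notin M$ and giving $X\subseteq M$. I would also note that the naive route through the Willard--Dissanayake inequality $|X|\le d(X)^{\pi\chi(X)\psi_c(X)}$ combined with a density bound only produces a spurious extra factor of $\pi\chi(X)$ in the exponent, which is precisely why the direct submodel argument is needed.
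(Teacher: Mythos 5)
First, a point of order: the paper does not prove this theorem at all --- it is quoted verbatim from Sun's 1988 paper --- so there is no in-paper argument to compare against. Judged on its own terms, your proposal contains a genuine gap, one you locate yourself but do not close. The submodel set-up and the cellularity lemma are fine, but the decisive step --- deriving a contradiction from $p\in X\setminus M$ --- is exactly where the write-up dissolves into intention (``I would localize, using local $\pi$-bases\dots'') rather than argument. Separating $p$ from each $x\in X\cap M$ by a neighbourhood $V(\alpha,x)$ \emph{of $x$} with $p\notin\overline{V(\alpha,x)}$ cannot work as stated: after the cellularity reduction you only control the closure of a union, and, as you observe, $p$ may lie in $\overline{\bigcup_i V(\alpha_i,x_i)}$ even though it lies in no single $\overline{V(\alpha_i,x_i)}$. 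Shrinking the $V$'s to $\pi$-base members does not repair this, because the obstruction is the direction of the separation, not the size of the sets.

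The missing idea is to separate in the opposite direction, using the closed pseudobase \emph{at the outside point} $p$. Since $\bigcap_{\alpha<\kappa}\overline{V(\alpha,p)}=\{p\}$, the open sets $G_\alpha=X\setminus\overline{V(\alpha,p)}$, $\alpha<\kappa$, cover $X\setminus\{p\}\supseteq X\cap M$, and each satisfies $p\notin\overline{G_\alpha}$ because $V(\alpha,p)$ is an open neighbourhood of $p$ disjoint from $G_\alpha$. For each $\alpha$ and each $x\in X\cap M\cap G_\alpha$ take $\mathcal{W}_{x,\alpha}=\{B\in\mathcal{B}_x:B\subseteq G_\alpha\}$; since $\mathcal{B}_x$ is a local $\pi$-base and $G_\alpha$ is a neighbourhood of $x$, one has $x\in\overline{\bigcup\mathcal{W}_{x,\alpha}}$, and all these sets belong to $M$. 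Applying the cellularity lemma to $\bigcup\{\mathcal{W}_{x,\alpha}:x\in X\cap M\cap G_\alpha\}$ yields a subfamily $\mathcal{W}_\alpha$ with $|\mathcal{W}_\alpha|\le c(X)$, hence $\mathcal{W}_\alpha\in M$ by $\kappa$-closure, such that $X\cap M\cap G_\alpha\subseteq\overline{\bigcup\mathcal{W}_\alpha}\subseteq\overline{G_\alpha}$, so that $p\notin\overline{\bigcup\mathcal{W}_\alpha}$ for every $\alpha$. Now $X\cap M\subseteq\bigcup_{\alpha<\kappa}\overline{\bigcup\mathcal{W}_\alpha}$ while $p$ witnesses that this union is not all of $X$; since $\langle\mathcal{W}_\alpha:\alpha<\kappa\rangle\in M$, elementarity produces a point of $X\cap M$ outside $\bigcup_{\alpha<\kappa}\overline{\bigcup\mathcal{W}_\alpha}$ --- the desired contradiction. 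This block-by-block use of the sets $G_\alpha$ is precisely how the ``closure of a union versus union of closures'' problem is circumvented in Sun's and \v{S}apirovski\u{\i}'s arguments; without it your outline is a plan, not a proof.
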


Carlson proved the following.

\begin{theorem}\rm\cite{C0}\label{C0} For any space $X$, $d_\theta(X)\leq\pi\chi(X)^{c(X)}$.
\end{theorem}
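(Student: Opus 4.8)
The plan is to exhibit a $\theta$-dense subset of $X$ of cardinality at most $\pi\chi(X)^{c(X)}$. Write $\theta=c(X)$, $\lambda=\pi\chi(X)$ and $\kappa=\lambda^\theta$, and for each $x\in X$ fix a local $\pi$-base $\mathcal V_x$ with $|\mathcal V_x|\le\lambda$. First I would record the reformulation of $\theta$-density most convenient here: a set $D$ is $\theta$-dense if and only if $D\cap\overline V\neq\emptyset$ for every member $V$ of the $\pi$-base $\mathcal P=\bigcup_{x\in X}\mathcal V_x$ (if $D$ meets the closure of some $V\subseteq W$ with $V\in\mathcal V_x$ and $x\in W$, then it meets $\overline W$, and conversely every nonempty open set contains such a $V$). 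Thus it suffices to hit, with few points, the closures of all $\pi$-base elements, and the role of $c(X)$ is to bound how many genuinely distinct such closures must be hit.

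My main approach is an elementary submodel argument. Choose $M\prec H(\mu)$ for a large regular $\mu$ with $X,\tau\in M$ and the assignment $x\mapsto\mathcal V_x$ in $M$, together with $\kappa+1\subseteq M$, $|M|=\kappa$, and $M$ closed under $\theta$-sequences, i.e.\ $[M]^{\le\theta}\subseteq M$; this is possible since $\kappa^\theta=\kappa$. Put $D=X\cap M$, so that $|D|\le\kappa$. I would then show $D$ is $\theta$-dense, which yields $d_\theta(X)\le\kappa$ and completes the proof. The key structural input is a maximal cellular family $\mathcal C\in M$ consisting of $\pi$-base elements (members of $\bigcup_x\mathcal V_x$): by maximality $\bigcup\mathcal C$ is dense, while $|\mathcal C|\le c(X)=\theta$ forces $\mathcal C\subseteq M$, so every $C\in\mathcal C$ lies in $M$ and one may take its \emph{centre} $x_C\in M\cap X\subseteq D$.

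The heart of the argument --- and the step I expect to be the main obstacle --- is to prove that no nonempty open $U$ can satisfy $\overline U\cap D=\emptyset$. Assuming such a $U$ exists, the plan is to run, inside $M$, a refinement of length $\theta$: starting from the $C_0\in\mathcal C$ with $U\cap C_0\neq\emptyset$ (which exists because $\bigcup\mathcal C$ is dense), one repeatedly passes to a maximal cellular family of $\pi$-base elements refining the current set, selects the member still meeting the relevant region, and records the associated centres in $M$. Since $M$ is closed under $\theta$-sequences, the entire $\theta$-sequence of choices lies in $M$, and the difficulty is to arrange the refinement so that the closure operation is controlled: one must ensure that after at most $\theta$ steps the accumulated data forces a point of $\overline U$ into $M$, contradicting $\overline U\cap M=\emptyset$. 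Making the closures cohere along the refinement --- that is, showing that the $\pi$-character lets one reflect the single \emph{bad} neighbourhood $U$, which itself need not belong to $M$, into an object $M$ can see --- is the delicate point; here $\theta=c(X)$ enters precisely as the bound on the length of the refinement, since a genuinely \emph{non-terminating} refinement would manufacture a cellular family of size $\theta^+$, contradicting $c(X)=\theta$.

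As a fallback, I would instead build the $\theta$-dense set by a direct transfinite recursion of length $\theta^+$: at stage $\alpha$ take a maximal cellular family $\mathcal W_\alpha$ of $\pi$-base elements whose closures miss the current set $D_\alpha$, adjoin one point from each member of $\mathcal W_\alpha$ to form $D_{\alpha+1}$, and take unions at limits. In this formulation $\theta$-density is immediate from maximality once the recursion stabilizes, since a stage with $\mathcal W_\alpha=\emptyset$ is exactly a $\theta$-dense set; so once again the whole content is the termination and size estimate. The antichain viewpoint provided by $c(X)$, combined with the width-$\lambda$ branching coming from the $\pi$-bases, is what I expect to deliver the final bound $d_\theta(X)\le\lambda^\theta=\pi\chi(X)^{c(X)}$.
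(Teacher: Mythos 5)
The paper itself does not prove this statement --- it is quoted from Carlson's paper \cite{C0} --- so there is no in-text argument to compare against; I will judge your proposal against the known \v Sapirovski-style proof. As written, your proposal is a plan rather than a proof: the entire combinatorial content is concentrated in the step you yourself flag as ``the main obstacle'' and ``the delicate point'', and neither of your two routes closes it. The fallback recursion cannot work as stated: each stage adjoins at most $c(X)$ points (one per member of a cellular family), so if it stabilized within $c(X)^+$ stages it would prove $d_\theta(X)\leq c(X)^+$, which is false --- a Cantor cube $2^\mu$ with $\mu>2^{\omega_1}$ is ccc and regular, hence $d_\theta=d>\omega_1=c(X)^+$. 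So ``termination and the size estimate'' is not a deferrable detail: the mechanism that produces the bound $\pi\chi(X)^{c(X)}$ (branching over the $\pi$-bases of the points already accumulated) is absent from that version. In the submodel version the analogous difficulty is exactly the one you leave open: having chosen pairwise disjoint $W_\eta$ ($\eta<\xi$) avoiding $U$, the set $X\setminus\bigcup_{\eta<\xi}\overline{W_\eta}$ need not be open, so a single local $\pi$-base element cannot be placed inside it, and your ``refinement'' picture does not explain how the next $W_\xi$ is kept disjoint from all its predecessors.

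The missing idea is saturation with respect to the \emph{closure of the union} rather than the union of the closures. Build $D$ with $|D|\leq\pi\chi(X)^{c(X)}$, closed under the operation: for every cellular $\mathcal W\in\bigl[\bigcup_{p\in D}\mathcal V_p\bigr]^{\leq c(X)}$ with $\overline{\bigcup\mathcal W}\neq X$, add a point of the \emph{open} set $X\setminus\overline{\bigcup\mathcal W}$ (the counting per stage is $(\lambda^{\theta}\cdot\lambda)^{\theta}=\lambda^{\theta}$, iterated $c(X)^+$ times with unions at limits). If some nonempty open $U$ had $\overline U\cap D=\emptyset$, one recursively picks $p_\xi\in D$ with $p_\xi\in X\setminus\overline{\bigcup_{\eta<\xi}W_\eta}$ (this open set contains $U$, hence is nonempty, hence contains a saturation point of $D$), notes $p_\xi\notin\overline U$, and chooses $W_\xi\in\mathcal V_{p_\xi}$ inside the intersection of $X\setminus\overline{\bigcup_{\eta<\xi}W_\eta}$ with a neighbourhood of $p_\xi$ missing $U$; then $\{W_\xi:\xi<c(X)^+\}$ is cellular, a contradiction. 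Working with $\overline{\bigcup\mathcal W}$ keeps the complement open, which is precisely the coherence you could not arrange. Your setup (local $\pi$-bases, the reduction of $\theta$-density to hitting closures of $\pi$-base elements, the role of $c(X)$ via cellular families, the target cardinal $\lambda^{\theta}$) is correct, but without this step the proposal does not constitute a proof.
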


Then by Theorems \ref{BCMP} and \ref{C0}, we have the following result.

\begin{theorem}\rm\label{nU} If $X$ is $n$-Urysohn, then $|X| \leq  \pi\chi(X)^{c(X)\chi(X)}$.
\end{theorem}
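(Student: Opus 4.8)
The plan is to combine the two immediately preceding results, Theorem \ref{BCMP} and Theorem \ref{C0}, via the standard rule of cardinal exponentiation $(\kappa^\lambda)^\mu=\kappa^{\lambda\mu}$. The entire content of the statement is already packaged in those two inequalities, so the proof is essentially a substitution followed by one exponent-multiplication step.

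First I would invoke Theorem \ref{BCMP}, which applies precisely because $X$ is assumed to be $n$-Urysohn, to obtain
$$|X|\leq d_\theta(X)^{\chi(X)}.$$
Next I would apply Theorem \ref{C0}, which holds for every topological space and hence in particular for $X$, to bound the $\theta$-density:
$$d_\theta(X)\leq \pi\chi(X)^{c(X)}.$$
Since cardinal exponentiation is monotone in the base, raising both sides of the second inequality to the power $\chi(X)$ and chaining with the first gives
$$|X|\leq d_\theta(X)^{\chi(X)}\leq\left(\pi\chi(X)^{c(X)}\right)^{\chi(X)}.$$

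Finally, applying the identity $(\kappa^\lambda)^\mu=\kappa^{\lambda\mu}$ to the rightmost term yields $\pi\chi(X)^{c(X)\chi(X)}$, which is exactly the asserted bound. There is essentially no obstacle in this argument: the only matter requiring attention is that the hypotheses of the two cited theorems align with the present setting, namely that Theorem \ref{BCMP} needs $n$-Urysohnness (satisfied by assumption) while Theorem \ref{C0} imposes no separation axiom at all. Consequently no further properties of $X$ need to be verified, and the proof reduces to the single cardinal-arithmetic computation above.
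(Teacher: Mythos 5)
Your proof is correct and follows exactly the route the paper itself takes: it derives the bound by chaining Theorem \ref{BCMP} with Theorem \ref{C0} and applying $(\kappa^\lambda)^\mu=\kappa^{\lambda\mu}$. Nothing further is needed.
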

Note that recently in \cite{BCGM} it is proved that $|X|\leq2^{c(X)\pi\chi(X)} $, for every $n$-Urysohn homogeneous space $X$. It is natural to pose the following question.

\begin{question}\rm\label{Que} Is $|X| \leq  \pi\chi(X)^{c(X)n\hbox{-}\psi_c(X)}$ true for every $n$-Urysohn space?
\end{question}

Theorem \ref{pQ} below gives a partial answer to the previous question in the class of $n$-Urysohn quasiregular spaces. Recall that a space is said to be quasiregular if for each open set $U$ there exists a open subset $V$ of $U$ contained with its closure in $U$. The following examples show that quasiregularity and $n$-Urysohness are independent properties.

\begin{example}\rm
	A quasiregular space which is not $3$-Urysohn.
\end{example}

	Consider a space similar to the one of Example \ref{ex.3-Hausdorff}. $X=3\cup (\omega\times 3)$, where $3=\{0,1,2\}$.
	The points from $\omega\times 3$ has the discrete topology, instead, a basic open neighbourhood of $i=0,1,2$ is $U(i,F)=\{i\}\cup \{(\omega\times j)\setminus (F\times j)\,\,:\,\,j\in\{0,1,2\}\,\,,\,\,j\not=i\}$, where $F$ is a finite subset of $\omega$. $X$ is not $3$-Urysohn since $0,1$ and $2$ belong to the closure of each open neighbourhood of them. 
$\hfill$ $\triangle$

Carlson and Ridderbos in \cite{CR}, under $(\frak c^+=2^{\frak c})$, constructed an example of a Urysohn ccc space which has the $\pi$-character equal to the continuum and the density equal to the successor of the continuum. Then such space cannot be quasiregular since the inequality  $d(X)\leq\pi\chi(X)^{c(X)}$, proved by Sapirovski for regular spaces \cite{S}, holds also for quasiregular spaces, that is
\begin{theorem}\rm\cite{C0}\label{quasi} For a quasiregular space $X$, $d(X)\leq\pi\chi(X)^{c(X)}$.
\end{theorem}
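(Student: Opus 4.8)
The plan is to reproduce Sapirovski's proof of the inequality $d(X)\le\pi\chi(X)^{c(X)}$ for regular spaces and to isolate the single place where the separation axiom is actually used, replacing it by quasiregularity. Write $\lambda=\pi\chi(X)$, $\mu=c(X)$ and $\kappa=\lambda^{\mu}$; I would first record the two arithmetic facts that govern the bookkeeping, namely $\kappa^{\mu}=(\lambda^{\mu})^{\mu}=\lambda^{\mu}=\kappa$ and the regularity of the cardinal $\mu^{+}=c(X)^{+}$. For each $x\in X$ fix once and for all a local $\pi$-base $\mathcal V(x)$ with $|\mathcal V(x)|\le\lambda$, and let $\mathcal B=\bigcup_{x\in X}\mathcal V(x)$ be the resulting $\pi$-base for $X$. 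The goal is to construct a dense subset of cardinality at most $\kappa$.

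First I would build, by transfinite recursion of length $\mu^{+}$, an increasing continuous chain $\langle D_{\xi}:\xi<\mu^{+}\rangle$ of subsets of $X$ with $|D_{\xi}|\le\kappa$ for every $\xi$, starting from an arbitrary singleton and taking unions at limits. At a successor step, given $D_{\xi}$ with $\overline{D_{\xi}}\ne X$, consider the subfamily $\mathcal B_{\xi}=\{B\in\mathcal B:B\cap\overline{D_{\xi}}=\emptyset\}$ of $\pi$-base elements missing the current closure, choose a maximal pairwise-disjoint $\mathcal A_{\xi}\subseteq\mathcal B_{\xi}$ (so $|\mathcal A_{\xi}|\le c(X)=\mu$ by the definition of cellularity), select a point $a_{A}\in A$ for each $A\in\mathcal A_{\xi}$, and set $D_{\xi+1}=D_{\xi}\cup\{a_{A}:A\in\mathcal A_{\xi}\}$. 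Each successor step adds at most $\mu$ points, so $|D_{\xi}|\le\kappa$ is preserved and $D=\bigcup_{\xi<\mu^{+}}D_{\xi}$ satisfies $|D|\le\mu^{+}\cdot\kappa=\kappa$. (In the variant where one instead adds representatives of \emph{every} maximal disjoint subfamily attached to $D_\xi$, the count is bounded by $|\mathcal B_\xi|^{\mu}\le\kappa^{\mu}=\kappa$, which is exactly why the exponent $c(X)$ appears; either organization keeps the size at $\kappa$.)

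The hard part will be showing that $D$ is dense, and this is the only place where quasiregularity is needed. Suppose toward a contradiction that $D$ is not dense; then $U=X\setminus\overline{D}$ is a nonempty open set. Here, in place of regularity, I invoke quasiregularity to produce a nonempty open $V$ with $\overline{V}\subseteq U$, and then use that $\mathcal B$ is a $\pi$-base to pick $W\in\mathcal B$ with $\emptyset\ne W\subseteq V$; in particular $W\cap\overline{D}=\emptyset$, so $W\in\mathcal B_{\xi}$ for \emph{every} $\xi<\mu^{+}$. Maximality of each $\mathcal A_{\xi}$ then forces $W$ to meet some $A^{\xi}\in\mathcal A_{\xi}$, while the representative $a_{A^{\xi}}$ lies in $D_{\xi+1}$ but outside $\overline{D_{\xi}}$, so these representatives are pairwise distinct across the $\mu^{+}$ stages. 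Sapirovski's closing-off argument extracts the contradiction from this situation: the regularity of $\mu^{+}=c(X)^{+}$ together with the cellularity bound $c(X)=\mu$ forces the eligible set $W$ to have been absorbed into the maximal disjoint families at some bounded stage $\xi<\mu^{+}$, contradicting $W\cap\overline{D}=\emptyset$.

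I expect the genuine obstacle to be precisely this density step — verifying that a nonempty open set whose closure avoids $D$ is necessarily captured at a bounded stage, which is where both the recursion length $c(X)^{+}$ and the regularity of $c(X)^{+}$ are essential and where Sapirovski's combinatorial mechanism must be followed faithfully (see \cite{S}). The conceptual content of the adaptation, by contrast, is routine once located: the regular-space proof appeals to the separation axiom \emph{only} to pass from the nonempty open set $X\setminus\overline{D}$ to a nonempty open set with closure contained in it, and this is exactly the defining property of quasiregular spaces stated just above. I would therefore check carefully that no further, hidden use of regularity enters the construction — in the maximality argument or in the selection of the $\pi$-base — after which the bound $d(X)\le\pi\chi(X)^{c(X)}$ holds for every quasiregular space $X$.
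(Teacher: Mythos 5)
Your identification of where the separation axiom enters is correct: regularity is used only to shrink the nonempty open set $X\setminus\overline{D}$ to a nonempty open set whose closure it contains, and quasiregularity does exactly that. But the recursion you actually wrote down is not Sapirovski's, and it cannot produce a dense set. You add at most $c(X)$ points at each of $c(X)^{+}$ successor stages, so $|D|\le c(X)^{+}$; if this $D$ were always dense you would have proved $d(X)\le c(X)^{+}$ for every quasiregular space, which already fails for $2^{\kappa}$ with $\kappa$ large (cellularity $\omega$, density as large as desired). The symptom is visible in your final step: the sets $A^{\xi}\in\mathcal{A}_{\xi}$ meeting $W$ are pairwise distinct but come from \emph{different} maximal disjoint families, so they need not be pairwise disjoint and cellularity puts no bound on how many there are; the distinctness of the representatives $a_{A^{\xi}}$ contradicts nothing. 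Two further red flags: $\pi\chi(X)$ never actually enters your construction, and your ``variant'' count $|\mathcal{B}_{\xi}|^{\mu}\le\kappa^{\mu}$ is unjustified because $\mathcal{B}=\bigcup_{x\in X}\mathcal{V}(x)$ is indexed over all of $X$ and need not have size $\le\kappa$.

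The mechanism you need is different: at stage $\xi$, for \emph{every} cellular (pairwise disjoint) subfamily $\mathcal{W}$ of $\bigcup\{\mathcal{V}(x):x\in D_{\xi}\}$ --- the local $\pi$-bases at the points already chosen, so there are at most $(\kappa\cdot\lambda)^{\mu}=\kappa$ such families, each of size $\le\mu$ --- with $\overline{\bigcup\mathcal{W}}\ne X$, adjoin a point of $X\setminus\overline{\bigcup\mathcal{W}}$. If the resulting $D$ were not dense, quasiregularity would give a nonempty open $V$ with $\overline{V}\subseteq X\setminus\overline{D}$; taking a maximal cellular $\mathcal{W}\subseteq\{B\in\bigcup_{x\in D}\mathcal{V}(x):B\cap V=\emptyset\}$ one checks that $D\subseteq\overline{\bigcup\mathcal{W}}$ (for $x\in D$ and any neighbourhood $G$ of $x$, some member of $\mathcal{V}(x)$ lies in $G\setminus\overline{V}$ and must meet $\bigcup\mathcal{W}$ by maximality) while $V\cap\overline{\bigcup\mathcal{W}}=\emptyset$; since $|\mathcal{W}|\le\mu$ and $\mu^{+}$ is regular, $\mathcal{W}$ is available at some stage $\xi<\mu^{+}$, and the point added for it lies in $D\setminus\overline{\bigcup\mathcal{W}}$, a contradiction. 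Be aware, finally, that the paper proves the theorem by an entirely different and much shorter route: it quotes Carlson's inequality $d_{\theta}(X)\le\pi\chi(X)^{c(X)}$, valid for \emph{all} spaces (Theorem \ref{C0}), and observes that $d$ and $d_{\theta}$ coincide for quasiregular spaces, quasiregularity being used only to turn a $\theta$-dense set into a dense one.
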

The previous theorem follows from Theorem \ref{C0} and the fact that $d$ and $d_\theta$ coincide in the class of quasiregular spaces.\\
\\
Then, by Theorems \ref{gen} and \ref{quasi}, we obtain the following partial answer to Question \ref{Que}. 

\begin{theorem}\rm\label{pQ}
	If $X$ is an $n$-Urysohn quasiregular space, then $|X| \leq \pi\chi(X)^{c(X)wt(X)n\hbox{-}\psi_c(X)}$.
\end{theorem}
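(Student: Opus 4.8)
The plan is to obtain the bound by chaining together two results already established in the excerpt: Theorem~\ref{gen}, which bounds the cardinality of an $n$-Urysohn space in terms of its density, and Theorem~\ref{quasi}, which bounds the density of a quasiregular space in terms of $\pi\chi$ and $c$. The only additional ingredient is the cardinal arithmetic identity $(a^b)^c=a^{b\cdot c}$, used to collapse the resulting iterated exponent.

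First I would note that since $X$ is $n$-Urysohn, Theorem~\ref{gen} applies directly and yields
\[
|X|\leq d(X)^{n\hbox{-}\psi_c(X)\,wt(X)}.
\]
Since $X$ is also quasiregular, Theorem~\ref{quasi} is available and gives $d(X)\leq \pi\chi(X)^{c(X)}$. Substituting the second inequality into the first and simplifying the nested power, I obtain
\[
|X|\leq \left(\pi\chi(X)^{c(X)}\right)^{n\hbox{-}\psi_c(X)\,wt(X)} = \pi\chi(X)^{c(X)\,n\hbox{-}\psi_c(X)\,wt(X)},
\]
where the last equality is the standard law for cardinal exponentiation. Because every cardinal appearing here is infinite, the exponent $c(X)\,n\hbox{-}\psi_c(X)\,wt(X)$ equals $c(X)\,wt(X)\,n\hbox{-}\psi_c(X)$, which is exactly the bound asserted in the statement.

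The argument involves essentially no obstacle beyond bookkeeping, since all of the topological content is packaged into the two cited theorems. The two points that warrant a moment of care are, first, confirming that both standing hypotheses, $n$-Urysohn and quasiregular, are simultaneously in force so that each theorem may legitimately be invoked, and second, managing the iterated exponent correctly. The latter is routine once one observes that multiplication of infinite cardinals in the exponent is commutative and absorptive, so the order in which $c(X)$, $wt(X)$, and $n\hbox{-}\psi_c(X)$ are written is immaterial and no genuine loss occurs when the two powers are merged.
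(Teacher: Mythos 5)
Your proposal is correct and is exactly the argument the paper intends: the theorem is stated as an immediate consequence of Theorem~\ref{gen} and Theorem~\ref{quasi}, combined via the standard cardinal exponentiation law $(a^b)^c=a^{bc}$. Nothing further is needed.
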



\bigskip

Recall that every Hausdorff space having a compact $\pi$-base is quasiregular. In \cite{Tka83}, Tkachenko introduced the o-tightness of a space, and in \cite{BCG} Bella, Carlson and Gotchev used it to give an improvement of Theorem \ref{Sun} in the class of Hausdorff spaces having a compact $\pi$-base. We recall that the o-tightness of a space $X$ does not exceed $\kappa$, or $ot(X) \leq\kappa$, if
for every family $\mathcal U$ of open subsets of $X$ and for every point $x\in X$ with $x\in \overline{\bigcup{\mathcal U}}$
there exists a subfamily $\mathcal{V}\subseteq\mathcal U$ such that $|{\mathcal V}|\leq\kappa$ and $x\in  \overline{\bigcup{\mathcal V}}$.
We have that $ot(X)\leq t(X)$ and $ot(X)\leq c(X)$.
Also $wL(X)ot(X)\leq c(X)$. Moreover $ot(X)\leq wt(X)$, for any space $X$ \cite{BCG}.

\begin{theorem}\rm\cite{BCG} If X is a Hausdorff space with a compact $\pi$-base, then
	$|X| \leq\pi\chi(X)^{wL(X)ot(X)\psi_c(X)}$.
\end{theorem}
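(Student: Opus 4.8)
The plan is to prove $|X|\leq\lambda^\kappa$ for $\lambda=\pi\chi(X)$ and $\kappa=wL(X)ot(X)\psi_c(X)$ by a single elementary submodel argument that treats all four invariants at once. A direct argument is forced here: composing the density estimate $d(X)\leq\pi\chi(X)^{c(X)}$ (Theorem \ref{quasi}, legitimate since a Hausdorff space with a compact $\pi$-base is quasiregular) with a Willard--Dissanayake-type inequality would insert a spurious factor of $\pi\chi(X)$ into the exponent and overshoot the bound, so the four cardinals must be woven together in one construction. I would first fix a compact $\pi$-base $\mathcal B$ (so $\overline B$ is compact for each $B\in\mathcal B$), and for every $x\in X$ a local $\pi$-base $\mathcal P_x$ with $|\mathcal P_x|\leq\lambda$ and a closed pseudobase $\{V(\alpha,x):\alpha<\psi_c(X)\}$ with $\bigcap_\alpha\overline{V(\alpha,x)}=\{x\}$.

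Next I would take $M\prec H(\theta)$ with $X,\tau,\mathcal B\in M$, with $\lambda^\kappa\subseteq M$ and $[M]^{\leq\kappa}\subseteq M$, and with $|M|=\lambda^\kappa$; such an $M$ exists because $(\lambda^\kappa)^\kappa=\lambda^\kappa$. The $\kappa$-closedness is imposed precisely so that every closed pseudobase (of size $\leq\psi_c(X)\leq\kappa$) and every subfamily extracted by weak Lindel\"ofness or by o-tightness (each of size $\leq\kappa$) that is definable from parameters of $M$ actually belongs to $M$. It then suffices to prove that $X\cap M$ is both dense in $X$ and closed, for then $X=\overline{X\cap M}=X\cap M\subseteq M$ and $|X|\leq|M|=\lambda^\kappa$.

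For density I would reflect $\mathcal B$ into $M$ and exploit the compactness of the closures $\overline B$: given a nonempty open set, weak Lindel\"ofness produces a $\leq wL(X)$-sized subfamily with dense union, compactness upgrades this to honest finite covers of the compact pieces meeting that set, and o-tightness captures the relevant accumulation with $\leq ot(X)$ many members; by elementarity and $\kappa$-closedness the witnessing points lie in $M$, which yields $\overline{X\cap M}=X$ with effective exponent $wL(X)ot(X)$ rather than $c(X)$. For closedness, suppose $p\in\overline{X\cap M}\setminus M$. For each $x\in X\cap M$ the equality $\bigcap_\alpha\overline{V(\alpha,x)}=\{x\}$ and $p\neq x$ give some $\alpha(x)<\psi_c(X)$ with $p\notin\overline{V(\alpha(x),x)}$; since $x\in M$ its pseudobase lies in $M$. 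The family $\{V(\alpha(x),x):x\in X\cap M\}$ covers $X\cap M$, so weak Lindel\"ofness and o-tightness, applied inside $M$, furnish a subfamily $\mathcal W''$ of size $\leq\kappa$, lying in $M$, with $p\in\overline{\bigcup\mathcal W''}$, while by construction $p\notin\overline{W}$ for every $W\in\mathcal W''$.

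The main obstacle is to convert the infinitary statement $p\in\overline{\bigcup\mathcal W''}$ into a genuine contradiction with ``$p\notin\overline W$ for each $W\in\mathcal W''$'': since $ot(X)$ may be infinite, the closure of the union is not the union of the closures, and this is exactly where the compact $\pi$-base must be used, by selecting a $\pi$-base element with compact closure around $p$ so that the cover $\mathcal W''$ admits a finite subcover there and the closures collapse to a finite union missing $p$. Making this compactness reduction work uniformly, while keeping every extracted subfamily of size $\leq\kappa$ inside $M$, is the heart of the improvement over Theorem \ref{Sun} and the only place the compact-$\pi$-base hypothesis is genuinely needed.
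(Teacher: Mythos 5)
The paper itself offers no proof of this statement: it is quoted as a known result from the submitted paper \cite{BCG}, so there is no in-paper argument to measure yours against. Judged on its own terms, your text is a proof plan rather than a proof, and both of the steps that carry the actual mathematical content are left open. The density of $X\cap M$ is only asserted: showing $\overline{X\cap M}=X$ for a model of size $\pi\chi(X)^{wL(X)ot(X)\psi_c(X)}$ amounts to proving a density bound of the form $d(X)\le\pi\chi(X)^{wL(X)ot(X)}$ for Hausdorff spaces with a compact $\pi$-base, i.e.\ replacing the exponent $c(X)$ of Theorem \ref{quasi} by the smaller invariant $wL(X)ot(X)$; that is a substantive theorem in its own right, and the sentence ``weak Lindel\"ofness produces\dots, compactness upgrades\dots, o-tightness captures\dots'' does not establish it.

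The closedness step has a concrete defect as written. The family $\{V(\alpha(x),x):x\in X\cap M\}$ is defined from the point $p\notin M$, so it is not an element of $M$, and you cannot apply $wL$ and $ot$ ``inside $M$'' to extract a subfamily lying in $M$; some reflection device (e.g.\ quantifying over all possible choices of indices $\alpha(x)$ coded in $M$) is needed first. Moreover, $wL(X)$ controls covers of the whole space $X$, whereas your family only covers the subset $X\cap M$ --- not $\overline{X\cap M}$ and in particular not $p$ --- so weak Lindel\"ofness does not even apply before the compactness reduction is made. You yourself identify the conversion of $p\in\overline{\bigcup\mathcal W''}$ into a contradiction as ``the heart of the improvement'' and ``the only place the compact $\pi$-base is genuinely needed,'' and then you do not carry it out; note also that a $\pi$-base element $B$ contained in a given neighbourhood of $p$ need not satisfy $p\in\overline B$, so the proposed collapse of $\overline{\bigcup\mathcal W''}$ to a finite union of closures missing $p$ does not follow as stated. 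Until these two steps are supplied, the proposal is an outline of a strategy, not a proof.
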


It is natural to pose the following question.

\begin{question}\rm Is $|X| \leq  \pi\chi(X)^{wL(X)ot(X)n\hbox{-}\psi_c(X)}$ true for every $n$-Urysohn (quasiregular) space?
\end{question}

\section{On $n$-Urysohn $n$-H-closed spaces}\label{S4}
In this section we present some properties of $n$-Urysohn $n$-H-closed spaces motivated by similar properties that hold in the class of H-closed spaces. 
Recall that a space is said to be H-closed if it is Hausdorff and it is closed in every Hausdorff space in which it is embedded. An $n$-Hausdorff space is called $n$-$H$-closed if it is closed in every $n$-Hausdorff space in which it is embedded \cite{BBCP0}.\\

\begin{theorem}\rm \cite{BBCP0}
	For an $n$-Hausdorff space $X$ the following are equivalent:
	\begin{itemize}
		\item[1.] $X$ is $n$-H-closed;
		\item[2.] for each open ultrafilter $\mathcal{U}$ on $X$, $|a({\cal U})|=n-1$, where $a(\cal U)$ denotes the adherence of $\cal U$;
		\item[3.] for every open filter ${\cal F}$ on $X$, $|a({\cal F})|\geq n-1$;
		\item[4.] for every $A\in [X]^{<n-1}$ and for each family of open subsets ${\cal U}$ of $X$ such that $X\setminus A\subseteq \bigcup{\cal U}$, there exists a finite subfamily ${\cal V}$ of ${\cal U}$ such that $X=\bigcup_{V\in {\cal V}}\overline{V}$.
	\end{itemize}
\end{theorem}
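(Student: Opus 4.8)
The plan is to establish the four equivalences through the two ``internal'' equivalences $(2)\Leftrightarrow(3)$ and $(3)\Leftrightarrow(4)$, which are purely filter-theoretic, together with the genuinely topological equivalence $(1)\Leftrightarrow(2)$. Before anything else I would record two elementary facts that drive every step. First, since $X$ is $n$-Hausdorff, every open ultrafilter $\mathcal U$ on $X$ satisfies $|a(\mathcal U)|\le n-1$: if $x_1,\dots,x_n$ were distinct points of $a(\mathcal U)$, pick by $n$-Hausdorffness neighbourhoods $V_i\ni x_i$ with $\bigcap_i V_i=\emptyset$; each $V_i\in\mathcal U$ (by the second fact), so $\emptyset\in\mathcal U$, a contradiction. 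Second, for an open ultrafilter $\mathcal U$ one has $x\in a(\mathcal U)$ if and only if every open neighbourhood of $x$ belongs to $\mathcal U$ (maximality of $\mathcal U$ forces any open set meeting all members of $\mathcal U$ into $\mathcal U$). This ``every neighbourhood of an adherent point is in $\mathcal U$'' principle is what will let pairwise information become simultaneous information.

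For $(2)\Leftrightarrow(3)$ I would argue as follows. Given any open filter $\mathcal F$, extend it (Zorn) to an open ultrafilter $\mathcal U\supseteq\mathcal F$; then $a(\mathcal U)\subseteq a(\mathcal F)$, so $(2)$ gives $|a(\mathcal F)|\ge n-1$, which is $(3)$. Conversely an open ultrafilter is in particular an open filter, so $(3)$ gives $|a(\mathcal U)|\ge n-1$, and combined with the automatic bound $|a(\mathcal U)|\le n-1$ this yields the equality in $(2)$. For $(3)\Leftrightarrow(4)$ I would use the complement/cover dictionary. If $(4)$ fails, with witnessing $A$ (so $|A|\le n-2$) and open family $\mathcal U$ covering $X\setminus A$ no finite subfamily of which has closures covering $X$, then $\{X\setminus\bigcup_{V\in\mathcal V}\overline V:\mathcal V\in[\mathcal U]^{<\omega}\}$ is a base of nonempty open sets for an open filter $\mathcal F$ whose adherence is contained in $A$, so $|a(\mathcal F)|\le n-2$, contradicting $(3)$. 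In the other direction, if $(3)$ fails for a filter $\mathcal F$, then $A:=a(\mathcal F)$ has $|A|\le n-2$ and $\{X\setminus\overline F:F\in\mathcal F\}$ covers $X\setminus A$ while, because each finite intersection $\bigcap_i F_i\in\mathcal F$ is a nonempty open set disjoint from the corresponding union of closures, no finite subfamily has closures covering $X$; hence $(4)$ fails.

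The topological core is $(1)\Leftrightarrow(2)$. For $\lnot(2)\Rightarrow\lnot(1)$ I would run a one-point extension (Kat\v{e}tov-style): if some open ultrafilter $\mathcal U$ has $|a(\mathcal U)|\le n-2$, adjoin a point $p\notin X$ and declare $O\subseteq X\cup\{p\}$ open iff $O\cap X$ is open in $X$ and, whenever $p\in O$, $O\cap X\in\mathcal U$. Then $X$ is a non-closed subspace (every neighbourhood of $p$ meets $X$), and $Y=X\cup\{p\}$ is $n$-Hausdorff: any $n$ points lying in $X$ are separated inside $X$, while for $p,y_1,\dots,y_{n-1}$ the hypothesis $|a(\mathcal U)|\le n-2$ furnishes some $y_j\notin a(\mathcal U)$, hence a $V\in\mathcal U$ and a neighbourhood of $y_j$ missing $V$, so $\{p\}\cup V$ separates the tuple. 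Thus $X$ is not $n$-H-closed. For $(2)\Rightarrow(1)$, suppose $X\subseteq Y$ with $Y$ $n$-Hausdorff and some $p\in\mathrm{cl}_Y X\setminus X$; trace the neighbourhood filter of $p$ to the open filter $\mathcal F=\{U\cap X:U\ni p\text{ open in }Y\}$ and extend to an open ultrafilter $\mathcal U$ on $X$. By $(2)$, $a(\mathcal U)=\{x_1,\dots,x_{n-1}\}$ has exactly $n-1$ points, and I claim the $n$ points $p,x_1,\dots,x_{n-1}$ cannot be separated in $Y$, contradicting $n$-Hausdorffness.

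This claim is exactly where I expect the only real difficulty, and it is the reason the proof must pass through an \emph{ultra}filter rather than the trace filter directly: for $n\ge 3$, knowing that each $x_i$ is adherent to every neighbourhood of $p$ only yields pairwise meeting, not a common point. The second elementary fact removes this obstacle: since each $x_i\in a(\mathcal U)$, every $Y$-neighbourhood $W_i$ of $x_i$ satisfies $W_i\cap X\in\mathcal U$, and since $\mathcal F\subseteq\mathcal U$ every $Y$-neighbourhood $U$ of $p$ satisfies $U\cap X\in\mathcal U$; as $\mathcal U$ is closed under finite intersections, $(U\cap X)\cap\bigcap_i (W_i\cap X)\in\mathcal U$ is nonempty, so $U\cap W_1\cap\dots\cap W_{n-1}\ne\emptyset$ for \emph{every} choice of neighbourhoods. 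This contradicts $H(Y)\le n$ and forces $X$ to be closed, i.e. $n$-H-closed. I would therefore isolate the two elementary facts as a preliminary lemma and then present the four implications in the order $(3)\Leftrightarrow(4)$, $(2)\Leftrightarrow(3)$, $(1)\Leftrightarrow(2)$.
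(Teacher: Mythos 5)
Your proposal is correct, but note that the paper itself offers no proof of this statement: it is quoted verbatim from \cite{BBCP0} (Basile--Bonanzinga--Carlson--Porter), so there is no in-paper argument to compare against. Judged on its own, your argument is sound and is the natural generalization of the classical characterizations of H-closed spaces in Porter--Woods: the two preliminary facts (that $|a(\mathcal U)|\le n-1$ automatically for open ultrafilters on an $n$-Hausdorff space, and that adherent points of an open ultrafilter have all their neighbourhoods in it) are exactly what makes the cycle close, and each implication checks out. In particular, the Kat\v{e}tov-style one-point extension for $\lnot(2)\Rightarrow\lnot(1)$ is valid (the separation of $p,y_1,\dots,y_{n-1}$ uses only that some $y_j\notin a(\mathcal U)$, which is where $|a(\mathcal U)|\le n-2$ enters), and your observation that $(2)\Rightarrow(1)$ must pass through an open \emph{ultra}filter rather than the trace filter --- so that pairwise adherence upgrades to nonemptiness of all finite intersections --- is precisely the point that fails for a naive argument when $n\ge 3$. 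Two small points worth making explicit if you write this up: in the one-point extension you use that $X$ is open in $Y=X\cup\{p\}$ so that separating neighbourhoods chosen inside $X$ remain open in $Y$; and in $(3)\Leftrightarrow(4)$ the set $A$ ranges over $[X]^{<n-1}$, i.e.\ $|A|\le n-2$, which is what your complement/cover dictionary uses. Neither is a gap, and the proof as outlined is complete.
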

For a topological space $(X, \tau)$ and $U \in \tau$, let $r(U)$ denote $int_X(\overline{U})$ and $X(s)$ denote the space whose set is $X$ and whose topology is generated by the base  $\{r(U):U \in \tau\}$.
A function $f:X\to Y$ is called $\theta$-continuous at $x_0$ if for every open neighbourhood $V$ of $f(x_0)$ in $Y$, there exists an open neighbourhood $U$ of $x_0$ in $X$ such that $f(\overline{U}^X)\subseteq \overline{V}^Y$.\\

\smallskip
Recall the following result.
\begin{proposition}\rm\cite{PW}
	Let $f:X\to Y$ be a $\theta$-continuous surjection. If $X$ is a H-closed space, then $Y$ is H-closed.
\end{proposition}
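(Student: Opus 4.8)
The plan is to use the covering characterization of H-closedness: a Hausdorff space $Z$ is H-closed if and only if for every open cover $\mathcal{W}$ of $Z$ there is a finite subfamily $\mathcal{W}'$ with $Z=\bigcup_{W\in\mathcal{W}'}\overline{W}^Z$ (this is the $n=2$ instance of condition 4 in the equivalence theorem stated above, where $[X]^{<n-1}=[X]^{<1}$ forces $A=\emptyset$). Assuming $Y$ is Hausdorff, I would take an arbitrary open cover $\mathcal{V}$ of $Y$ and manufacture a finite subfamily whose closures cover $Y$.

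First I would pull the cover back through $f$ using $\theta$-continuity. For each $x\in X$, since $\mathcal{V}$ covers $Y$ and $f(x)\in Y$, choose $V_x\in\mathcal{V}$ with $f(x)\in V_x$. By $\theta$-continuity of $f$ at $x$, there is an open neighbourhood $U_x$ of $x$ in $X$ such that $f(\overline{U_x}^X)\subseteq\overline{V_x}^Y$. The family $\{U_x:x\in X\}$ is then an open cover of $X$.

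Next I would invoke H-closedness of $X$. Since $X$ is H-closed, there exist finitely many points $x_1,\dots,x_k$ with $X=\bigcup_{i=1}^k\overline{U_{x_i}}^X$. Setting $V_i=V_{x_i}$, I claim $Y=\bigcup_{i=1}^k\overline{V_i}^Y$. Indeed, given $y\in Y$, surjectivity of $f$ yields $x\in X$ with $f(x)=y$; as $x\in\overline{U_{x_i}}^X$ for some $i$, we obtain $y=f(x)\in f(\overline{U_{x_i}}^X)\subseteq\overline{V_i}^Y$. Thus $\{V_1,\dots,V_k\}$ is a finite subfamily of $\mathcal{V}$ whose closures cover $Y$, and $Y$ is H-closed.

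The genuinely delicate point is not the covering argument, which is short, but the Hausdorffness of $Y$: the $\theta$-continuous image of an H-closed space need not be Hausdorff in general, so the hypothesis (implicit in the statement) that $Y$ is Hausdorff must be used in order to pass from the covering property to H-closedness. Beyond that, the only care required is bookkeeping, namely recording for each $x$ the specific member $V_x\in\mathcal{V}$ used to produce $U_x$, so that the finite selection $\{x_1,\dots,x_k\}$ returned by H-closedness of $X$ determines a genuine finite subfamily of $\mathcal{V}$ rather than merely a finite family of closures living in $X$.
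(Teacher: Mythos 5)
Your proof is correct and follows essentially the same route the paper takes for its $n$-H-closed generalization (Proposition 4.2): pull the cover back via $\theta$-continuity, apply the closure-cover characterization of H-closedness in $X$, and push the finite selection forward through the surjection. Your remarks on the implicit Hausdorffness of $Y$ and on the bookkeeping of $V_x$ are exactly the right points of care, so nothing is missing.
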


Now we prove the following.

\begin{proposition}\rm\label{theta}
	Let $f:X\to Y$ be a $\theta$-continuous bijection. If $X$ is an $n$-H-closed space then $Y$ is an $n$-H-closed space.
\end{proposition}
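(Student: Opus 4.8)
The plan is to reduce the statement to the characterization of $n$-H-closedness given in the preceding theorem (condition 4, say) and to transport the relevant covering property across the $\theta$-continuous bijection $f\colon X\to Y$. Since $Y$ is assumed to be an $n$-Hausdorff space (this is part of the meaning of being $n$-H-closed, and presumably part of the ambient hypotheses on $Y$), it suffices to verify one of the equivalent conditions for $Y$. I would work with condition 4: given $B\in[Y]^{<n-1}$ and a family $\mathcal{V}$ of open subsets of $Y$ with $Y\setminus B\subseteq\bigcup\mathcal{V}$, I must produce a finite subfamily whose closures cover $Y$.

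First I would set $A=f^{-1}(B)$; since $f$ is a bijection, $|A|=|B|<n-1$, so $A\in[X]^{<n-1}$. The key step is to pull the cover $\mathcal{V}$ of $Y\setminus B$ back to a suitable open cover of $X\setminus A$ using the $\theta$-continuity of $f$. For each $x\in X\setminus A$ we have $f(x)\in Y\setminus B$, so $f(x)\in V$ for some $V\in\mathcal{V}$; by $\theta$-continuity at $x$ there is an open neighbourhood $U_x$ of $x$ with $f(\overline{U_x}^X)\subseteq\overline{V}^Y$. I would collect $\mathcal{U}=\{U_x : x\in X\setminus A\}$, an open family covering $X\setminus A$. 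Applying condition 4 to the $n$-H-closed space $X$ yields a finite subfamily $U_{x_1},\dots,U_{x_k}$ with $X=\bigcup_{i=1}^{k}\overline{U_{x_i}}^X$. Taking the images and using surjectivity of $f$ together with the $\theta$-continuity estimate $f(\overline{U_{x_i}}^X)\subseteq\overline{V_i}^Y$ (where $V_i\in\mathcal{V}$ is the witness chosen for $x_i$), I obtain
\[
Y=f(X)=\bigcup_{i=1}^{k} f\!\left(\overline{U_{x_i}}^X\right)\subseteq\bigcup_{i=1}^{k}\overline{V_i}^Y\subseteq Y,
\]
so $\{V_1,\dots,V_k\}$ is the desired finite subfamily of $\mathcal{V}$ whose closures cover $Y$. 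This establishes condition 4 for $Y$, hence $Y$ is $n$-H-closed.

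The main obstacle I anticipate is bookkeeping rather than conceptual: I must be careful that the neighbourhood $U_x$ furnished by $\theta$-continuity is chosen relative to a \emph{fixed} witness $V\in\mathcal{V}$ for $f(x)$, so that the inclusion $f(\overline{U_x}^X)\subseteq\overline{V}^Y$ pairs each selected $U_{x_i}$ with a definite member of $\mathcal{V}$ when I later extract the finite subfamily. A secondary point to check is that one genuinely only needs a $\theta$-continuous \emph{bijection} (not a homeomorphism): surjectivity is used for $Y=f(X)$, and injectivity is used only to guarantee $|A|=|B|$ so that $A$ lands in $[X]^{<n-1}$. If instead I used condition 3 (open filters with large adherence), I would have to track how adherence points transfer under a merely $\theta$-continuous map, which is more delicate; the covering formulation in condition 4 sidesteps that difficulty, so I would prefer it.
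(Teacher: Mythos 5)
Your proposal is correct and follows essentially the same route as the paper's proof: both verify the covering characterization (condition 4) for $Y$ by choosing, for each $x\in X\setminus f^{-1}(A)$, a fixed witness $V(x)$ in the given cover and an open $U(x)$ with $f(\overline{U(x)}^X)\subseteq\overline{V(x)}^Y$, then applying $n$-H-closedness of $X$ and pushing the finite subfamily forward via surjectivity. The paper, like you, uses injectivity only to keep the exceptional set in $[X]^{<n-1}$ and treats the $n$-Hausdorffness of $Y$ as an ambient hypothesis.
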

\begin{proof}
	Fix $A\in [Y]^{< n-1}$ and a family $\mathcal U$ of open subsets of $Y$ with $Y\setminus A\subseteq \bigcup \mathcal U$. For every $x\in X\setminus f^{-1}(A)$ there exists an open subset $V(x)\in \mathcal U$ such that $f(x)\in V(x)$. Since $f$ is $\theta$-continuous, there is an open neighbourhood $U(x)$ of $x$ in $X$ such that $f(\overline{U(x)}^X)\subseteq \overline{V(x)}^Y$. Considering that the space $X$ is $n$-H-closed, there exists a finite subset $F$ of $X$ such that $X=\bigcup_{x\in F} \overline{U(x)}^X$. Since $Y=f(X)=f(\bigcup_{x\in F} \overline{U(x)}^X)=\bigcup_{x\in F}f(\overline{U(x)}^X)\subseteq \bigcup_{x\in F}\overline{V(x)}^Y$ holds, $Y$ is $n$-H-closed.
\end{proof}
In \cite{PW} it is proved that the identity $id: X(s)\to X$ is a $\theta$-homeomorphism. This fact allows to prove the following proposition.
\begin{proposition}\rm\cite{PW}\label{T2}
	A space $X$ is H-closed iff $X(s)$ is H-closed.
\end{proposition}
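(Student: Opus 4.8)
The plan is to exploit the fact, recalled in the sentence immediately preceding the statement, that the identity map $id:X(s)\to X$ is a $\theta$-homeomorphism, together with the proposition of \cite{PW} asserting that H-closedness is preserved under $\theta$-continuous surjections. Since a $\theta$-homeomorphism is by definition a bijection that is $\theta$-continuous and whose inverse is likewise $\theta$-continuous, both $id:X(s)\to X$ and $id:X\to X(s)$ are $\theta$-continuous surjections. The whole argument therefore reduces to applying that preservation result once in each direction, exactly as the $n$-Hausdorff analogue was used in Proposition \ref{theta}.

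First I would settle the underlying separation issue, since the term ``H-closed'' presupposes Hausdorffness on each side. The topology of $X(s)$ is generated by the regular open sets $\{r(U):U\in\tau\}$ and is hence coarser than $\tau$, so $X(s)$ being Hausdorff trivially forces $X$ to be Hausdorff. For the reverse direction, given distinct points $x,y\in X$ one chooses disjoint open sets $U\ni x$ and $V\ni y$; a short computation using $\overline{U}\cap V=\emptyset$ shows that $r(U)=int_X(\overline U)$ and $r(V)=int_X(\overline V)$ are disjoint regular open sets separating $x$ and $y$, so $X$ Hausdorff yields $X(s)$ Hausdorff. Thus $X$ and $X(s)$ are simultaneously Hausdorff and the biconditional is well posed.

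For the forward implication, assume $X$ is H-closed. The map $id:X\to X(s)$ is a $\theta$-continuous surjection, being the inverse branch of the $\theta$-homeomorphism, so the cited preservation proposition of \cite{PW} applied to it gives that its image $X(s)$ is H-closed. For the converse, assume $X(s)$ is H-closed; now $id:X(s)\to X$ is itself a $\theta$-continuous surjection, and the same proposition yields that $X$ is H-closed. No further computation is needed once the two branches are identified.

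The hard part, such as it is, will be the bookkeeping of directions: one must correctly match each implication with the branch of the $\theta$-homeomorphism that serves as the required $\theta$-continuous surjection, and verify that Hausdorffness genuinely transfers along the semiregularization so that the hypotheses of the preservation result are met on the domain side. I expect this to be the only point requiring care; with Hausdorffness secured in both directions, the equivalence follows immediately from two applications of the \cite{PW} proposition.
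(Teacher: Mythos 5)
Your proof is correct and follows exactly the route the paper indicates: the paper states this result without proof, citing \cite{PW}, but explicitly remarks that it follows from the fact that $id:X(s)\to X$ is a $\theta$-homeomorphism together with the preservation of H-closedness under $\theta$-continuous surjections, which is precisely your two-directional application. Your additional verification that Hausdorffness transfers between $X$ and $X(s)$ is a sound and worthwhile check that the hypotheses of the preservation result are met.
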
 
So, using Proposition \ref{theta} and the same fact, we can prove the following.
\begin{proposition}\rm\label{T4}
	A space $X$ is $n$-H-closed iff $X(s)$ is $n$-H-closed.
\end{proposition}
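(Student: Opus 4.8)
The plan is to imitate the structure of Proposition~\ref{T2} (the Hausdorff case) by exploiting the fact, recalled in the text from~\cite{PW}, that the identity map $id:X(s)\to X$ is a $\theta$-homeomorphism, together with the characterization of $n$-H-closedness and the newly established Proposition~\ref{theta}. First I would observe that a $\theta$-homeomorphism is, in particular, a $\theta$-continuous bijection in both directions: both $id:X(s)\to X$ and its inverse $id:X\to X(s)$ are $\theta$-continuous bijections. This is exactly the input format that Proposition~\ref{theta} demands.

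The forward direction then runs as follows. Suppose $X$ is $n$-H-closed. Since $id:X\to X(s)$ is a $\theta$-continuous bijection (as $id$ is a $\theta$-homeomorphism), Proposition~\ref{theta} applies with $X$ in the role of the domain and $X(s)$ in the role of the codomain, and yields that $X(s)$ is $n$-H-closed. For the converse, suppose $X(s)$ is $n$-H-closed. Now I apply Proposition~\ref{theta} in the other direction, using the $\theta$-continuous bijection $id:X(s)\to X$, with $X(s)$ as domain and $X$ as codomain, to conclude that $X$ is $n$-H-closed. Thus the two implications together give the desired equivalence, and the whole proof is essentially one symmetric application of Proposition~\ref{theta} to each of the two halves of the $\theta$-homeomorphism.

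The only genuine point to check, and the step I expect to be the main (minor) obstacle, is the justification that a $\theta$-homeomorphism really does provide $\theta$-continuous bijections in both directions, so that Proposition~\ref{theta} is legitimately applicable each way. A $\theta$-homeomorphism is a bijection that is $\theta$-continuous with $\theta$-continuous inverse, so this is immediate from the definition; but I would state it explicitly since Proposition~\ref{theta} is phrased for a single $\theta$-continuous bijection rather than for a $\theta$-homeomorphism. One should also confirm that the domain/codomain assignments match the hypothesis of Proposition~\ref{theta} (which requires the \emph{domain} to be $n$-H-closed in order to conclude the codomain is), which is why the forward direction uses $id:X\to X(s)$ and the converse uses $id:X(s)\to X$. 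No cardinal arithmetic or filter-theoretic computation is needed, as all of that work is already encapsulated in Proposition~\ref{theta} and in the cited fact about $X(s)$.

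\begin{proof}
	In \cite{PW} it is shown that the identity map $id:X(s)\to X$ is a $\theta$-homeomorphism; in particular both $id:X(s)\to X$ and $id:X\to X(s)$ are $\theta$-continuous bijections. If $X$ is $n$-H-closed, then applying Proposition~\ref{theta} to the $\theta$-continuous bijection $id:X\to X(s)$ gives that $X(s)$ is $n$-H-closed. Conversely, if $X(s)$ is $n$-H-closed, then applying Proposition~\ref{theta} to the $\theta$-continuous bijection $id:X(s)\to X$ gives that $X$ is $n$-H-closed.
\end{proof}
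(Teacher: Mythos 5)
Your proof is correct and is exactly the argument the paper intends: the paper gives no explicit proof but says the result follows from Proposition~\ref{theta} together with the fact that $id:X(s)\to X$ is a $\theta$-homeomorphism, which is precisely the two-way application of Proposition~\ref{theta} you carry out.
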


\begin{proposition}\rm\cite{PW}
	Let $X$ be a H-closed space and $U$ be an open subset of $X$. Then $\overline{U}$ is H-closed.
\end{proposition}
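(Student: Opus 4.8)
The plan is to use the classical open-filter characterization of H-closedness (the case $n=2$ of condition (3) in the characterization recalled above): a Hausdorff space $Y$ is H-closed if and only if every open filter $\mathcal{F}$ on $Y$ has nonempty adherence $a(\mathcal{F})=\bigcap_{F\in\mathcal{F}}\overline{F}^{\,Y}$. First I would note that $Y=\overline{U}$ is Hausdorff, being a subspace of $X$, so only the adherence condition needs to be verified. The single structural fact I will exploit throughout is that $U$ is dense in $Y=\overline{U}$; consequently every nonempty open subset of $Y$ meets $U$.

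Next, fix an arbitrary open filter $\mathcal{F}$ on $Y$ and, for each $F\in\mathcal{F}$, set $G_F=F\cap U$. I would first check that each $G_F$ is a nonempty open subset of $X$: it is nonempty because $F$ is a nonempty open subset of $Y$ while $U$ is dense in $Y$, and it is open in $X$ because $F=O\cap Y$ for some $O$ open in $X$, whence $G_F=O\cap U$ (using $U\subseteq Y$), an open subset of $X$. The family $\{G_F:F\in\mathcal{F}\}$ is a filter base on $X$, since for $F_1,F_2\in\mathcal{F}$ any $F_3\in\mathcal{F}$ with $F_3\subseteq F_1\cap F_2$ yields $G_{F_3}\subseteq G_{F_1}\cap G_{F_2}$. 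Let $\mathcal{G}$ be the open filter on $X$ it generates.

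Now I would invoke the H-closedness of $X$ to obtain a point $x\in a(\mathcal{G})\subseteq\bigcap_{F\in\mathcal{F}}\overline{G_F}^{\,X}$. Two observations then close the argument. Since $G_F\subseteq U$, we have $\overline{G_F}^{\,X}\subseteq\overline{U}=Y$, so the adherent point $x$ automatically lands in $Y$. And since $G_F=F\cap U\subseteq F$, we get $x\in\overline{F}^{\,X}\cap Y=\overline{F}^{\,Y}$ for every $F\in\mathcal{F}$; hence $x\in a(\mathcal{F})$, so $\mathcal{F}$ has nonempty adherence in $Y$. By the characterization, $Y=\overline{U}$ is H-closed.

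The step I expect to require the most care is the construction of $\mathcal{G}$ and the verification that it is a genuine open filter on $X$: both the nonemptiness of the traces $G_F$ and the filter-base property rest entirely on the density of $U$ in $\overline{U}$, which is the linchpin of the whole argument. Once $\mathcal{G}$ is in hand the return trip is essentially automatic, since the inclusions $G_F\subseteq U$ and $G_F\subseteq F$ force the $X$-adherent point into $\overline{U}$ and make it $\mathcal{F}$-adherent relative to $Y$. I would also remark that the same idea adapts to the finite-cover formulation (condition (4) with $n=2$), but the filter version sidesteps the subtlety that the $X$-closure of an open set $W$ with $W\cap Y=F$ may meet $Y$ well beyond $\overline{F}^{\,Y}$.
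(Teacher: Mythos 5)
Your proof is correct and follows essentially the same route the paper takes for the $n$-H-closed generalization of this proposition: trace an open filter on $\overline{U}$ to the dense open set $U$, generate an open filter on $X$, and use H-closedness of $X$ together with the inclusions $\overline{F\cap U}^{\,X}\subseteq\overline{U}$ and $\overline{F\cap U}^{\,X}\subseteq\overline{F}^{\,X}$ to land an adherent point in $a_{\overline{U}}(\mathcal{F})$. No substantive differences.
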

We can prove that the previous proposition can be extended to $n$-H-closed spaces.

\begin{proposition}\rm
	Let $X$ be an $n$-H-closed space and $U$ be an open subset of $X$. Then $\overline{U}$ is $n$-H-closed.
\end{proposition}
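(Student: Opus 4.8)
The plan is to mimic the classical H-closed proof via the open-ultrafilter characterization, but it is cleaner to exploit the covering characterization (item 4 of the $n$-H-closed theorem) directly. So let $X$ be $n$-H-closed and let $U$ be open in $X$; write $Y=\overline{U}$ with the subspace topology. I want to verify condition 4 of the $n$-H-closed characterization for $Y$: fix $A\in[Y]^{<n-1}$ and a family $\mathcal{W}$ of relatively open subsets of $Y$ with $Y\setminus A\subseteq\bigcup\mathcal{W}$; I must produce a finite $\mathcal{V}\subseteq\mathcal{W}$ with $Y=\bigcup_{W\in\mathcal{V}}\operatorname{cl}_Y W$.

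First I would lift the relatively open cover of $Y\setminus A$ to an open cover of $X\setminus A$. For each $W\in\mathcal{W}$ choose an open $\widetilde{W}\subseteq X$ with $W=\widetilde{W}\cap Y$. The key trick is to throw in the open set $X\setminus\overline{U}$ (which is disjoint from $Y$, so it will be harmless in the end): the family $\{\widetilde{W}:W\in\mathcal{W}\}\cup\{X\setminus\overline{U}\}$ is a family of open subsets of $X$ whose union contains $X\setminus A$, since every point of $Y\setminus A$ lies in some $\widetilde{W}$ and every point outside $\overline{U}$ lies in $X\setminus\overline{U}$. Now apply condition 4 for the $n$-H-closed space $X$ to get a finite subfamily whose closures cover $X$; say $X=\bigl(\bigcup_{i=1}^k\overline{\widetilde{W_i}}\bigr)\cup\overline{X\setminus\overline{U}}$.

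The main step is then to intersect this covering with $Y$ and check that the extra set contributes nothing to $\overline{U}$. Concretely, $\overline{X\setminus\overline{U}}=X\setminus\operatorname{int}\overline{U}$, and since $U$ is open we have $U\subseteq\operatorname{int}\overline{U}$, so $\overline{X\setminus\overline{U}}\cap U=\emptyset$; thus $\overline{X\setminus\overline{U}}\cap Y$ is contained in the boundary $\overline{U}\setminus U$ rather than filling $Y$. Here lies the one point needing care: I must pass from $\overline{\widetilde{W_i}}$ (closure in $X$) to $\operatorname{cl}_Y W_i$ (closure in $Y$) and absorb the residual boundary piece. The robust way is to restrict to the dense open subset $U$: every point of $U$ lies in some $\overline{\widetilde{W_i}}$ (not in $\overline{X\setminus\overline{U}}$), and for $x\in U\cap\overline{\widetilde{W_i}}$ one checks $x\in\operatorname{cl}_Y(\widetilde{W_i}\cap U)$, and $\widetilde{W_i}\cap U\subseteq W_i$ so $x\in\operatorname{cl}_Y W_i$. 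This shows $U\subseteq\bigcup_{i=1}^k\operatorname{cl}_Y W_i$; taking closures in $Y$ and using that $U$ is dense in $Y=\overline{U}$ while each $\operatorname{cl}_Y W_i$ is already $Y$-closed yields $Y=\overline{U}=\operatorname{cl}_Y U\subseteq\bigcup_{i=1}^k\operatorname{cl}_Y W_i$, as required.

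The expected obstacle is precisely this transfer between ambient and subspace closures, where the boundary set $\overline{X\setminus\overline{U}}$ could in principle meet $Y\setminus A$; the density of $U$ in $\overline{U}$ is what neutralizes it, and openness of $U$ (giving $U\subseteq\operatorname{int}\overline{U}$) is what guarantees the auxiliary set $X\setminus\overline{U}$ stays off $U$. Once the finite subfamily $\{W_1,\dots,W_k\}\subseteq\mathcal{W}$ is extracted and its $Y$-closures shown to cover $Y$, condition 4 of the characterization is met and $\overline{U}$ is $n$-H-closed.
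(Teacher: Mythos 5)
Your proof is correct, but it runs through a different item of the characterization theorem than the paper does. You verify condition 4 (the covering property): you lift a relatively open cover of $\overline{U}\setminus A$ to $X$ by adjoining the auxiliary open set $X\setminus\overline{U}$, extract a finite subfamily whose $X$-closures cover $X$, and then neutralize the auxiliary set on $U$ via $\overline{X\setminus\overline{U}}\cap U=\emptyset$, finishing with the density of $U$ in $\overline{U}$ to pass from covering $U$ to covering all of $\overline{U}$ by subspace closures. The paper instead uses condition 3: given an open filter $\mathcal{F}$ on $\overline{U}$, it generates an open filter $\mathcal{G}$ on $X$ from the trace $\{F\cap U: F\in\mathcal{F}\}$, invokes $|a_X(\mathcal{G})|\geq n-1$, and shows $a_X(\mathcal{G})\subseteq a_{\overline{U}}(\mathcal{F})$. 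The two arguments are essentially dual; the filter version is shorter because the comparison of adherences absorbs in one line the closure-transfer bookkeeping that you handle explicitly (and correctly) with the boundary set and the density of $U$. Your version is more elementary in that it avoids filters altogether, at the cost of the extra care about ambient versus subspace closures. One small formality you should add, which the paper does state: $\overline{U}$ is $n$-Hausdorff because the $n$-Hausdorff property is hereditary; this is needed before the characterization theorem can be applied to $\overline{U}$ at all.
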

\begin{proof}
	Note that the $n$-Hausdorff property is hereditary. Let $A=\overline{U}^X$ and $\mathcal F$ an open filter on $A$. Then $\{F\cap U: F\in\mathcal F\}$ is an open filter base on $X$. Let ${\cal G}=\{W\subseteq X: W \hbox{ is open in } X \hbox{ and } F\cap U\subseteq W \hbox{ for some }F\in\mathcal F\}$. Then $\mathcal G$ is an open filter on $X$, therefore $|a_X({\cal G})|\geq n-1$. Since $a_X({\cal G})\subseteq \bigcap_{F\in {\cal F}}\overline{(F\cap U)}^X= \bigcap_{F\in {\cal F}}\overline{(F\cap U)}^A\subseteq \bigcap_{F\in {\cal F}}\overline{F}^A=a_A({\cal F}) $, we have that $|a_A({\cal F})|\geq |a_X({\cal G})|\geq n-1$.
\end{proof}

We prove the following surprising result.

\begin{proposition}\rm
	Every $n$-Hausdorff $T_1$ space with at least one isolated point is not $n$-H-closed. 
\end{proposition}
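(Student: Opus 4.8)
The plan is to use the characterization of $n$-H-closedness recalled from \cite{BBCP0}, specifically condition (2): an $n$-Hausdorff space $X$ is $n$-H-closed if and only if every open ultrafilter $\mathcal{U}$ on $X$ satisfies $|a(\mathcal{U})|=n-1$. The strategy is to exhibit a single open ultrafilter whose adherence is a singleton, which is incompatible with $|a(\mathcal{U})|=n-1$ as soon as $n-1\geq 2$. (Equivalently one could invoke condition (3), which would require $|a(\mathcal{U})|\geq n-1$, and derive $1\geq n-1$.)

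First I would fix an isolated point $x_0\in X$, so that $\{x_0\}$ is open, and use the $T_1$ hypothesis to conclude that $\{x_0\}$ is also closed, i.e. $\overline{\{x_0\}}=\{x_0\}$. Next I would consider the family $\mathcal{U}=\{U\in\tau : x_0\in U\}$ of all open neighbourhoods of $x_0$ and check that it is an open ultrafilter: it is a filter of nonempty open sets, being closed under finite intersections and open supersets since all of its members contain $x_0$; and it is maximal, because any open set $V\notin\mathcal{U}$ misses $x_0$ and is therefore disjoint from $\{x_0\}\in\mathcal{U}$, so $V$ cannot be adjoined to $\mathcal{U}$ without producing the empty set. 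The key computation is then that $a(\mathcal{U})=\bigcap\{\overline{U} : x_0\in U\in\tau\}\subseteq\overline{\{x_0\}}=\{x_0\}$, while $x_0\in\overline{U}$ for every open $U\ni x_0$, so $a(\mathcal{U})=\{x_0\}$ and $|a(\mathcal{U})|=1$.

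Finally, if $X$ were $n$-H-closed, condition (2) of \cite{BBCP0} would force $|a(\mathcal{U})|=n-1$, giving $n-1=1$; hence $X$ cannot be $n$-H-closed whenever $n-1\geq 2$. The main point to flag is the boundary case, which is where the real content lies: the argument yields a contradiction precisely for $n\geq 3$, and for $n=2$ the conclusion genuinely fails, as the convergent sequence $\omega+1$ is a compact (hence H-closed) Hausdorff $T_1$ space with isolated points. Thus the new phenomenon is confined to $n\geq 3$, which is exactly the \emph{surprising} contrast with the classical ($n=2$) H-closed theory. Beyond recognizing this dependence on $n$, the only verifications needed are the maximality of $\mathcal{U}$ among open filters and the use of $T_1$ to collapse $\overline{\{x_0\}}$ to $\{x_0\}$, both of which are routine.
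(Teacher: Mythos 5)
Your proof is correct, but it follows a genuinely different route from the paper's. The paper argues via the covering characterization of $n$-H-closedness (item (4) of the quoted theorem from \cite{BBCP0}): taking $A=\{x_1\}$ for an isolated point $x_1$, using $T_1$ to cover $X\setminus\{x_1\}$ by open sets missing $x_1$, and observing that $x_1$ lies in the closure of none of them, so no finite subfamily of closures can cover $X$. You instead use the filter characterization (items (2)/(3)): the neighbourhood filter of an isolated point is an open ultrafilter (maximality holding precisely because $\{x_0\}$ itself is a member), and $T_1$ collapses its adherence to the singleton $\{x_0\}$, contradicting $|a(\mathcal{U})|=n-1$. The two arguments are dual uses of the same background theorem and of the $T_1$ hypothesis (the paper separates $X\setminus\{x_1\}$ from $x_1$ by open sets; you shrink $\overline{\{x_0\}}$ to $\{x_0\}$), and both are equally elementary. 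What your version buys is the explicit identification of the boundary case: both proofs silently require $n\geq 3$ (the paper's choice $|A|=1$ needs $1<n-1$ to be admissible in item (4), and your contradiction $n-1=1$ is only a contradiction for $n\geq 3$), and you correctly point out that for $n=2$ the statement is false, as witnessed by $\omega+1$. That caveat is worth recording, since the proposition as stated in the paper does not exclude $n=2$.
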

\begin{proof}
	Suppose by contradiction that for each $A\in [X]^{n-1}$ and every family of open subsets $\cal U$ of $X$ with $X\setminus A\subseteq \bigcup {\cal U}$ there exists a finite subfamily $\cal V$ of $\cal U$ such that $X=\bigcup\{\overline{V}: V\in {\cal V}\}$. Suppose there exists one isolated point $x_1$. Let $A=\{x_1\}$. Since $X$ is $T_1$ there exists a family $\cal U$ of open subsets such that $X\setminus A=\bigcup {\cal U}$. Since $x_1$ is isolated then it does not belong to $\overline{U}$ for each $U\in {\cal U}$, therefore one cannot select a subfamily $\cal V$ of $\cal U$ such that $X=\bigcup\{\overline{V}: V\in {\cal V}\}$. 
\end{proof}
Recall that $\psi_c(X) = \psi_c(X(s))$, for any Hausdorff space $X$. Now we prove the following result.
\begin{proposition}\rm
	If $X$ is an $n$-Urysohn space then $n\hbox{-}\psi_c(X)=n\hbox{-}\psi_c(X(s))$.
\end{proposition}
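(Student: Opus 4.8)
The plan is to show the two cardinals bound each other. For the inequality $n\hbox{-}\psi_c(X(s))\leq n\hbox{-}\psi_c(X)$, I would start from an $n$-closed-pseudobase in $X$ and transport it to $X(s)$. The key observation is the relationship between closures in the two topologies: since the topology of $X(s)$ is generated by the regular open sets $\{r(U):U\in\tau\}$, which is a coarser (semiregularization) topology, one expects closures to be comparable. First I would fix, for each point $x$, the collection $\{V(\alpha,x):\alpha<\kappa\}$ of $\tau$-open neighbourhoods witnessing $n\hbox{-}\psi_c(X)=\kappa$, and replace each $V(\alpha,x)$ by $r(V(\alpha,x))=int_X(\overline{V(\alpha,x)})$, which is an open neighbourhood of $x$ in $X(s)$. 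The crucial lemma to invoke (standard in the H-closed literature, e.g.\ \cite{PW}) is that for any open $U$ one has $\overline{U}^X=\overline{r(U)}^X=\overline{U}^{X(s)}$, i.e.\ the $\theta$-closures agree and the closure of a regular open set is the same in both topologies. Using this, the intersection condition $\bigcap_\alpha\overline{V(\alpha,x_1)}\cap\cdots\cap\bigcap_\alpha\overline{V(\alpha,x_n)}=\emptyset$ in $X$ translates verbatim into the corresponding emptiness condition in $X(s)$, giving the desired bound.

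For the reverse inequality $n\hbox{-}\psi_c(X)\leq n\hbox{-}\psi_c(X(s))$, I would run essentially the same argument in the other direction. Given a witnessing family of $X(s)$-open neighbourhoods $\{W(\alpha,x):\alpha<\kappa\}$, each $W(\alpha,x)$ is a union of basic sets $r(U)$, so I can shrink to a single basic neighbourhood $r(U(\alpha,x))\subseteq W(\alpha,x)$ containing $x$; this $r(U(\alpha,x))$ is genuinely $\tau$-open. Again applying the closure-coincidence lemma, $\overline{r(U(\alpha,x))}^X=\overline{r(U(\alpha,x))}^{X(s)}\subseteq\overline{W(\alpha,x)}^{X(s)}$, so the family $\{r(U(\alpha,x)):\alpha<\kappa\}$ of $\tau$-open neighbourhoods inherits the required disjointness of closed intersections, and hence witnesses $n\hbox{-}\psi_c(X)\leq\kappa$.

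The main obstacle, and the step I would spend the most care on, is pinning down the exact closure relationship between $X$ and $X(s)$ and verifying it is applicable under mere $n$-Urysohnness rather than Hausdorffness. The identity map $id:X(s)\to X$ being a $\theta$-homeomorphism (already noted in the excerpt via \cite{PW}) is the right tool, since $n$-$\psi_c$ is defined entirely through ordinary closures of open sets, and $\theta$-closure information is precisely what is preserved. I would make explicit the equality $\overline{U}^{X}=\overline{U}^{X(s)}$ for $U\in\tau$ and the fact that every $X(s)$-basic open set is $\tau$-open with the same $\tau$-closure; once these are isolated as a short lemma, both inequalities follow by a direct substitution into the defining condition, with no cardinal arithmetic beyond observing that the index set $\kappa$ is unchanged. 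A minor point to check is that passing from $W(\alpha,x)$ to a single basic $r(U(\alpha,x))$ does not increase the number of neighbourhoods per point beyond $\kappa$, which holds because one basic set per index $\alpha$ suffices.
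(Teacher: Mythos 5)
Your proposal is correct and follows essentially the same route as the paper, whose entire proof is the one-line observation that $\overline{U}^{X}=\overline{r(U)}^{X}=\overline{r(U)}^{X(s)}$ (from Porter--Woods) lets one transport witnessing families between $X$ and $X(s)$. You simply spell out the two directions that the paper leaves implicit, including the harmless shrinking of $X(s)$-open sets to basic regular open sets.
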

\begin{proof}
	Since $\overline{U}=\overline{r(U)}=\overline{r(U)}^{X(s)}$ (see \cite{PW}), if $X$ is $n$-Urysohn, then $X(s)$ is $n$-Urysohn and $n\hbox{-}\psi_c(X)=n\hbox{-}\psi_c(X(s))$.
\end{proof}
 
The following result is assumed without proof in \cite{DP}; we give the proof for sake of completeness.

\begin{lemma}\rm\label{p1}
	Let $X$ be a H-closed space. Then $\chi(X(s))\leq\psi_c(X)$.
\end{lemma}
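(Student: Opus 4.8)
The plan is to prove $\chi(X(s))\leq\psi_c(X)$ by producing, for each point $x\in X$, a local base in the topology of $X(s)$ whose cardinality is bounded by $\kappa:=\psi_c(X)$. Since the topology of $X(s)$ is generated by the regular open sets $\{r(U):U\in\tau\}$, the natural candidates for such a base are the regularizations $r(U)$ of the members of a closed pseudobase at $x$. So first I would fix, for each $x$, a family $\{U(\alpha,x):\alpha<\kappa\}$ of open neighbourhoods of $x$ witnessing $\psi_c(X)\leq\kappa$, meaning $\bigcap_{\alpha<\kappa}\overline{U(\alpha,x)}=\{x\}$ (recall that for a Hausdorff space $\psi_c(X)=2\text{-}\psi_c(X)$, and the closed pseudobase collapses the intersection of closures to the singleton). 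I would then set $\mathcal{B}_x=\{r(U(\alpha,x)):\alpha<\kappa\}$, a family of $X(s)$-open neighbourhoods of $x$ of size at most $\kappa$.

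The crux is to verify that $\mathcal{B}_x$ is genuinely a local base at $x$ \emph{in the topology of} $X(s)$, and this is where the H-closedness of $X$ must enter. Given any basic $X(s)$-open neighbourhood $r(W)$ of $x$, I need to find some $\alpha$ with $r(U(\alpha,x))\subseteq r(W)$, or more realistically find a finite intersection of members of $\mathcal{B}_x$ sitting inside $r(W)$ — so I should allow finite intersections, which does not increase the cardinality bound. The key observation I expect to use is that $\overline{U}=\overline{r(U)}$ (noted earlier in the excerpt, citing \cite{PW}), so the closed pseudobase condition transfers verbatim to the regularized sets: $\bigcap_{\alpha<\kappa}\overline{r(U(\alpha,x))}=\bigcap_{\alpha<\kappa}\overline{U(\alpha,x)}=\{x\}$.

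The main obstacle, and the step that forces H-closedness, will be converting this ``intersection of closures equals $\{x\}$'' statement into a genuine neighbourhood-base inclusion. The plan is to argue by contradiction: suppose no finite subfamily of $\mathcal{B}_x$ is contained in $r(W)$. Then the family $\{\overline{U(\alpha,x)}\setminus r(W):\alpha<\kappa\}$ together with the neighbourhood filter of $x$ generates an open filter $\mathcal{F}$ on $X$ whose adherence, by H-closedness (equivalently, by the characterization that open filters have nonempty adherence, or that $X$ is closed in every Hausdorff extension), must contain a point $z$. Because $X$ is Hausdorff and H-closed, one shows $z\in\bigcap_{\alpha<\kappa}\overline{U(\alpha,x)}=\{x\}$, so $z=x$; but the complement of the neighbourhood $r(W)$ of $x$ also lies in the filter (or in the closures witnessing the adherence), forcing $x\notin r(W)$, a contradiction. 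Pinning down exactly which open filter to build so that its adherence both lands in $\{x\}$ and certifies failure of the inclusion is the delicate point; I would lean on characterization~(3) of H-closed spaces ($|a(\mathcal{F})|\geq 1$ for every open filter $\mathcal F$, the $n=2$ case) to guarantee the adherence is nonempty, and on Hausdorffness to squeeze it down to the single point $x$. Once the base inclusion is established, the bound $|\mathcal{B}_x|\leq\kappa$ gives $\chi(x,X(s))\leq\kappa$ for every $x$, whence $\chi(X(s))\leq\psi_c(X)$.
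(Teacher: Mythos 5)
Your overall architecture matches the paper's: take a closed pseudobase $\{U(\alpha,x):\alpha<\kappa\}$ at $x$ (closed under finite intersections), pass to the regularizations $r(U(\alpha,x))$, use $\overline{U}=\overline{r(U)}$ to transfer the condition $\bigcap_\alpha\overline{U(\alpha,x)}=\{x\}$, and invoke H-closedness to show these form a local base in $X(s)$. The paper executes the last step by a covering argument: given a regular open $T\ni x$, the sets $X\setminus\overline{U(\alpha,x)}$ cover the regular closed set $X\setminus T$, which is an H-set in the H-closed space $X$, so finitely many of them have closures covering $X\setminus T$, and complementing gives a finite $\mathcal G$ with $x\in\bigcap_{U\in\mathcal G}int(\overline U)\subseteq T$. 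Your plan instead runs the dual filter/adherence argument, which is a legitimate equivalent route --- but the specific filter you propose does not work, and this is precisely the step you flag as ``delicate,'' so it is a genuine gap rather than a cosmetic one. The sets $\overline{U(\alpha,x)}\setminus r(W)$ are not open (they are intersections of a closed set with a closed set), so they cannot generate an \emph{open} filter; worse, adjoining the neighbourhood filter of $x$ degenerates the system entirely, since $r(W)$ is itself an $X$-open neighbourhood of $x$ and $r(W)\cap\bigl(\overline{U(\alpha,x)}\setminus r(W)\bigr)=\emptyset$. As written, your ``filter'' contains the empty set.

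The repair is to drop the neighbourhood filter of $x$ and use genuinely open generators avoiding $\overline W$ rather than $r(W)$. Assuming no finite intersection $\bigcap_{\alpha\in F}r(U(\alpha,x))$ is contained in $T=r(W)$, note that $X\setminus T=\overline{X\setminus\overline W}$, so the open set $\bigcap_{\alpha\in F}r(U(\alpha,x))$ meets $X\setminus\overline W$; hence the sets $G_F=\bigcap_{\alpha\in F}r(U(\alpha,x))\cap(X\setminus\overline W)$, for $F\in[\kappa]^{<\omega}$, form a base for an open filter $\mathcal F$. By H-closedness $a(\mathcal F)\neq\emptyset$, and any $z\in a(\mathcal F)$ lies in $\overline{r(U(\alpha,x))}=\overline{U(\alpha,x)}$ for every $\alpha$, forcing $z=x$, while also $z\in\overline{X\setminus\overline W}=X\setminus T$, contradicting $x\in T$. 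With that substitution your argument closes and yields the same bound $\chi(x,X(s))\leq\kappa$; the paper's covering version is somewhat shorter because it never needs to manufacture a filter, only to quote the H-set property of regular closed subsets.
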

\begin{proof}
	Let $\kappa = \psi_c(X)$ and $x\in X$.  There is a family $\mathcal{U}$ of open neighbourhood of $x$ of such that $x \in \bigcap_{U\in \mathcal{U}} U \subseteq \bigcap_{U\in \mathcal{U}}\overline{U}=\{x\}$ and $\kappa = |\mathcal{U}|$. Without loss of generality we can assume that $\mathcal U$ is closed under finite intersections. We want to show that $\{int(\overline{U}):U \in \mathcal{U}\}$ is a neighborhood base of $x$ in $X(s)$. Let $T$ be an open neighborhood of $x$ in $X(s)$.  As $X(s)$ is semiregular, we can assume that $T = int(\overline{U})$ is regular open.  
		So, $ \{x\}= \bigcap_{U\in\mathcal{U}}\overline{U}\subseteq T$ 
		and then $X\backslash T\subseteq  X\backslash \bigcap_{U\in\mathcal{U}}\overline{U}= \bigcup_{U\in\mathcal{U}}X\backslash\overline{U}$.
		Thus, $\{X\backslash \overline{U} :U \in \mathcal{U}\}$ is a family of regular open sets of $X$ that cover $X\backslash T$. Since $X\backslash T$ is a H-set (i.e. a regular closed subset in a H-closed space), 
		there is a finite subset $\mathcal{G} \subseteq \mathcal{U}$ such that $X\backslash T \subseteq \bigcup_{U\in \mathcal{G}} \overline{X\backslash \overline{U}}=\bigcup_{U\in \mathcal{G}}X\setminus int(\overline{U})=X\setminus \bigcap_{U\in \mathcal{G}}int(\overline{U})$.
		Then, $\bigcap_{U\in \mathcal{G}}int(\overline{U})
		\subseteq T$ implying $x \in int(\overline{\bigcap_{U\in \mathcal{G}} U})\subseteq int(\bigcap_{U\in \mathcal{G}}\overline{U})=\bigcap_{U \in \mathcal{G}}int(\overline{U}) \subseteq T$. By the arbitrarity of $x$, we conclude that $\chi(X(s)) \leq \kappa$.
\end{proof}
Since the inequalities $\psi_c(X)\leq \psi_c(X(s))\leq\chi(X(s))$ are true for every space $X$, by the previous lemma, we obtain the following proposition.

\begin{proposition}\rm\cite{DP}\label{T1}
	Let $X$ be a H-closed space. Then $\chi(X(s))=\psi_c(X)$.
\end{proposition}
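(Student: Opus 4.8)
The plan is to prove Proposition~\ref{T1} by combining Lemma~\ref{p1} with the two universally valid inequalities $\psi_c(X)\leq\psi_c(X(s))\leq\chi(X(s))$, so that the only genuine work has already been done in the lemma. First I would recall why the chain $\psi_c(X)\leq\psi_c(X(s))\leq\chi(X(s))$ holds for \emph{every} space: closed pseudocharacter never exceeds character (a character base at a point $x$ is in particular a collection of open neighbourhoods whose closures intersect in $\{x\}$), and the passage from $X$ to its semiregularization $X(s)$ can only increase the number of distinct closures available at a point since the regular open sets $r(U)=int_X(\overline{U})$ satisfy $\overline{r(U)}=\overline{U}$ (as noted earlier in the excerpt, citing \cite{PW}), so a closed pseudobase in $X(s)$ pulls back to one in $X$, giving $\psi_c(X)\leq\psi_c(X(s))$.

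Next I would invoke Lemma~\ref{p1}, which supplies the reverse inequality $\chi(X(s))\leq\psi_c(X)$ under the hypothesis that $X$ is H-closed. Stringing these together yields
\[
\psi_c(X)\leq\psi_c(X(s))\leq\chi(X(s))\leq\psi_c(X),
\]
forcing all four quantities to coincide; in particular $\chi(X(s))=\psi_c(X)$, which is exactly the assertion of Proposition~\ref{T1}. The entire argument is a two-line deduction once the lemma is granted.

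The only subtle point to verify is that the inequality $\psi_c(X(s))\leq\chi(X(s))$ and the monotonicity $\psi_c(X)\leq\psi_c(X(s))$ really hold without the H-closed assumption, since the proposition's strength comes from the lemma alone. For the latter I would argue that because $X(s)$ has a coarser-or-equal closure behaviour on regular open sets, every point's closed pseudobase computed in $X(s)$ remains a closed pseudobase in the original topology; more carefully, since $\overline{r(U)}=\overline{U}$ the closures of $X(s)$-open sets are already closures of $X$-open sets, so any witnessing family for $\psi_c(X(s))$ is a witnessing family for $\psi_c(X)$, giving $\psi_c(X)\leq\psi_c(X(s))$. I do not anticipate a real obstacle here; the main obstacle, such as it is, has been discharged inside Lemma~\ref{p1}, where the H-closedness was used to cover the regular closed (hence H-set) complement $X\setminus T$ by finitely many sets. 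Thus the proof of the proposition itself is essentially a clean concatenation of a general chain of inequalities with the sharp bound from the lemma.
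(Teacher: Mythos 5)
Your proof is correct and is essentially the paper's own argument: the paper likewise derives the proposition by combining Lemma~\ref{p1} with the chain $\psi_c(X)\leq\psi_c(X(s))\leq\chi(X(s))$, which it notes holds for every space. Your extra justification of that chain via $\overline{r(U)}=\overline{U}$ is sound and just makes explicit what the paper leaves implicit.
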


Then it is natural to pose the following question.
\begin{question}\rm\label{q1}
	Let $X$ be an $n$-Urysohn $n$-H-closed space. Is $\chi(X(s))\leq n$-$\psi_c(X)$ true?
\end{question}

Recall the following amazing Dow and Porter's result that strongly improved the Gryzlov's theorem: \textit{if $X$ is a H-closed space with $\psi_c(X)\leq \omega$, then $|X|\leq 2^\omega$}\cite{Gr}.
\begin{theorem}\rm\cite{DP}\label{c1}
	Let $X$ be a H-closed space. Then $|X|\leq 2^{\psi_c(X)}$.
\end{theorem}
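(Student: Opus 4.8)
The plan is to prove the cardinality bound $|X| \leq 2^{\psi_c(X)}$ for an H-closed space $X$ by reducing the problem to a bound on the character of the associated semiregular space $X(s)$, and then invoking a known estimate for H-closed semiregular spaces. First I would set $\kappa = \psi_c(X)$ and recall Lemma~\ref{p1}, which gives $\chi(X(s)) \leq \psi_c(X) = \kappa$. By Proposition~\ref{T2}, the space $X(s)$ is itself H-closed, and it is semiregular by construction (its topology is generated by the regular open sets of $X$). Thus $X(s)$ is an H-closed semiregular space of character at most $\kappa$. Since $X$ and $X(s)$ share the same underlying point set, it suffices to bound $|X(s)|$.

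The key step is therefore to establish a cardinality bound for H-closed \emph{semiregular} spaces of the form $|Y| \leq 2^{\chi(Y)}$. I would attempt this via a standard closing-off (elementary submodel or transfinite construction) argument, exploiting the characterization of H-closedness in terms of open covers: for every family $\mathcal U$ of open sets covering $Y$, finitely many closures $\overline{V_1},\dots,\overline{V_m}$ already cover $Y$ (this is the $n=2$, $A=\emptyset$ instance of the cover characterization). The plan is to build an increasing chain of subsets $\{H_\beta : \beta < \chi(Y)^+\}$, each of size at most $2^{\chi(Y)}$, closed under choosing witnesses: for each point already captured I fix a neighborhood base of size $\leq \chi(Y)$, and at each stage I use H-closedness to reflect finite subcovers of the accumulated open families, adding the relevant closures' points. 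Taking the union $H = \bigcup_\beta H_\beta$ yields a set of size $\leq 2^{\chi(Y)}$ which I would argue is closed and all of $Y$: if some $y \in Y \setminus H$, the neighborhood bases of points in $H$ together with a separating neighborhood of $y$ form an open family whose finitely many closures should already cover $Y$ by H-closedness, producing a contradiction with $y \notin H$.

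The main obstacle will be the closing-off step itself, specifically ensuring that the separation afforded by H-closedness (which only guarantees \emph{closures} of a finite subcover cover the space, not the open sets themselves) interacts correctly with the $G_\kappa$-style closed pseudobases. In ordinary Hausdorff $|X| \leq 2^{\chi(X)}$ arguments one uses genuine open covers and the Lindel\"of-type reflection is clean; here the passage through closures means I must be careful that a point $y$ outside $H$ cannot lie in the closure of every captured neighborhood. The semiregularity of $X(s)$ and the fact that regular open sets behave well under the closure operator is what makes this tractable, since $\overline{U} = \overline{r(U)}$ as noted earlier in the excerpt. I expect the cleanest route is to cite or reproduce the Dow--Porter machinery directly: reflect the closed pseudobase and use that in an H-closed semiregular space, each $H_\beta$ can be arranged to be closed, so $H = Y$.

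Once the bound $|X(s)| \leq 2^{\chi(X(s))}$ is in hand, the conclusion is immediate:
\[
|X| = |X(s)| \leq 2^{\chi(X(s))} \leq 2^{\psi_c(X)},
\]
using Lemma~\ref{p1} for the final inequality. Thus the entire difficulty is concentrated in the semiregular H-closed cardinality estimate, with Lemma~\ref{p1} and Proposition~\ref{T2} serving as the bridge from the given $\psi_c$ hypothesis to the character hypothesis on $X(s)$.
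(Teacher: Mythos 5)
Your proposal follows exactly the route the paper indicates: reduce to the semiregularization via Proposition~\ref{T2} (H-closedness of $X(s)$) and Lemma~\ref{p1}/Proposition~\ref{T1} ($\chi(X(s))=\psi_c(X)$), then apply the bound $|Y|\leq 2^{\chi(Y)}$ for H-closed spaces (Theorem~\ref{T3}) to get $|X|=|X(s)|\leq 2^{\chi(X(s))}\leq 2^{\psi_c(X)}$. The only difference is that you sketch a closing-off argument for Theorem~\ref{T3} where the paper simply cites it from Dow--Porter, so the core reduction is the same and the proposal is correct.
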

The previous theorem was proved using Propositions \ref{T2}, \ref{T1} and the next result:

\begin{theorem}\rm\cite{DP}\label{T3} Let $X$ be a H-closed space. Then $|X|\leq 2^{\chi(X)}$.
\end{theorem}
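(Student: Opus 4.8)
*Let $X$ be a H-closed space. Then $|X|\leq 2^{\chi(X)}$.*

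The plan is to reduce the bound on $|X|$ to a bound on the cardinality of the semiregularization $X(s)$, where the machinery for H-closed spaces is most naturally applied. First I would recall the two facts already established in the excerpt: by Proposition~\ref{T2}, $X(s)$ is H-closed whenever $X$ is, and the identity $id:X(s)\to X$ is a $\theta$-homeomorphism (from \cite{PW}). Since this identity is a bijection, we have $|X|=|X(s)|$, so it suffices to bound $|X(s)|$. The advantage of passing to $X(s)$ is that it is semiregular (its topology is generated by the regular open sets $r(U)$), and in a semiregular H-closed space the character is well-behaved relative to the closed pseudocharacter by Lemma~\ref{p1}.

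The main work would be a closing-off / elementary-submodel argument bounding $|X(s)|$ by $2^{\chi(X(s))}$. Set $\kappa=\chi(X(s))$ and fix, for each point $x\in X(s)$, a neighbourhood base $\mathcal{B}_x$ of regular open sets with $|\mathcal{B}_x|\leq\kappa$. The idea is to build an increasing chain of subsets $\{H_\alpha:\alpha<\kappa^+\}$, each of size at most $2^\kappa$, closed under the operation of choosing, for each finite family of the fixed neighbourhood bases already encountered that fails to cover the closure of its union, a witnessing point; then take $H=\bigcup_\alpha H_\alpha$ and argue $H=X(s)$. Here the H-closedness is exploited through characterization~(4) of the earlier theorem (or equivalently through open ultrafilter adherence): a proper closed $H$ would leave an open cover of a complement with no finite subfamily whose closures cover, contradicting H-closedness once the closing-off has absorbed all relevant neighbourhoods. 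Each stage adds at most $(2^\kappa)^{\kappa}\cdot\kappa=2^\kappa$ points, and there are $\kappa^+\leq 2^\kappa$ stages, so $|H|\leq 2^\kappa$; hence $|X|=|X(s)|=|H|\leq 2^{\chi(X(s))}\leq 2^{\chi(X)}$, the last inequality holding because the topology of $X(s)$ is coarser than that of $X$.

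The hard part will be verifying that the limit set $H$ is in fact closed in $X(s)$ and then deriving the contradiction from H-closedness. Concretely, one must show that if $p\notin H$, the regular open sets separating $p$ from the points of $H$ (available because $X(s)$ is at least Hausdorff) assemble into a cover of $\overline{H}^{X(s)}$ witnessing a failure of the finite-closure-covering property guaranteed by H-closedness; making this precise requires care in tracking which neighbourhood families were captured at which stage, so that the finite subfamily promised by H-closedness lives entirely inside the constructed set. A clean alternative that sidesteps some bookkeeping is to run the argument through open ultrafilters: for a point outside $H$ one extends the trace of a suitable open filter to an open ultrafilter whose adherence, forced into $H$ by the closing-off, contradicts $p\notin H$. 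Either route gives the stated inequality, which (combined with Propositions~\ref{T2}, \ref{T1}) yields Theorem~\ref{c1}.
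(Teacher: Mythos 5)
The paper does not actually prove this statement: Theorem~\ref{T3} is recalled from Dow and Porter \cite{DP} without proof, so there is no internal argument to compare yours against. Judged on its own, your sketch has the right general shape (pass to the semiregularization, note $|X|=|X(s)|$ and $\chi(X(s))\leq\chi(X)$, then run a closing-off argument of length $\kappa^{+}$ with stages of size $2^{\kappa}$), and the cardinal arithmetic is fine. But the step you yourself flag as ``the hard part'' is precisely where the argument breaks down as written, and it is not a bookkeeping issue. H-closedness gives the finite-closures-cover property only for open families covering \emph{all} of $X$ (minus a point, in the Hausdorff case), or for regular closed subsets (H-sets); it is not inherited by arbitrary closed subspaces. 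When you take $p\notin H$ and choose, for each $x\in H$, a basic $B_x$ with $p\notin\overline{B_x}$, the family $\{B_x:x\in H\}$ covers only $H$ (or at best $\overline{H}^{X(s)}$), not $X$, so you cannot invoke the characterization of H-closedness to extract a finite subfamily whose closures cover and thereby reach a contradiction with the witnesses absorbed into $H$. This is exactly the obstruction that makes the Dow--Porter theorem nontrivial and distinguishes it from the Arhangel'ski\u{\i}-type closing-off for Lindel\"of or compact spaces, where the relevant covering property localizes to the closed set $\overline{H}$.

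Your proposed repair via open ultrafilters is closer to how the actual proof goes, but as stated it is only a gesture: you would need to specify which open (ultra)filters the closing-off set $H$ is closed under taking adherent points of, and why the trace on $H$ of the neighbourhood filter of an outside point $p$ generates a filter whose adherence both lies in $H$ and forces $p\in H$. In an H-closed Hausdorff space an open ultrafilter has a single adherent point, but nothing in your construction guarantees that this point is captured by the stages $H_\alpha$, since the operation you close under (witnesses for finite families of basic neighbourhoods whose closures fail to cover $X$) does not obviously control adherent points of filters built from $H$. So the central claim $H=X$ is not established, and the proposal has a genuine gap at its core step.
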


Theorem \ref{T3} can be extended to the class of $n$-H-closed spaces.

\begin{theorem}\rm\cite{BBCP}\label{n-H-closed}
	Let $X$ be an $n$-H-closed space. Then $|X|\leq 2^{\chi(X)}$.
\end{theorem}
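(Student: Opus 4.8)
The plan is to adapt the classical Dow--Porter argument for $H$-closed spaces (Theorem \ref{T3}) to the $n$-Hausdorff setting, since the final statement is precisely the $n$-analogue of that result. The natural engine for such cardinality bounds is a closing-off construction, so first I would recall the structural fact that $X$ being $n$-H-closed gives us control over open covers modulo small sets: by the characterization in item 4. of the equivalence theorem (stated earlier in this excerpt), for every $A\in[X]^{<n-1}$ and every family $\mathcal{U}$ of open sets with $X\setminus A\subseteq\bigcup\mathcal{U}$ there is a finite $\mathcal{V}\subseteq\mathcal{U}$ with $X=\bigcup_{V\in\mathcal{V}}\overline{V}$. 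This is the substitute for $H$-closedness used in covering arguments.

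Next I would set $\kappa=\chi(X)$ and fix for each $x\in X$ a neighbourhood base $\mathcal{B}_x$ with $|\mathcal{B}_x|\leq\kappa$. The core of the proof is an Erd\H{o}s--Rado style elementary-submodel or transfinite closing-off argument: I would build an increasing chain $\{M_\alpha:\alpha<\kappa^+\}$ of subsets of $X$, each of size $\leq 2^\kappa$, closed under the operation of choosing, for each finite subfamily of the neighbourhood filters already captured, witnesses to the covering property in item 4. Concretely, at each stage I would add, for every small subset $A$ of the piece built so far and every suitable family of basic open sets covering $X\setminus A$, points realizing the finite closed subcover guaranteed by $n$-H-closedness. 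Taking $M=\bigcup_{\alpha<\kappa^+}M_\alpha$ yields a set with $|M|\leq 2^\kappa$ that is closed in $X$ under all the relevant witnessing operations.

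The decisive step is then to show $M=X$, i.e. $M$ is closed, or more precisely that no point can lie outside $M$. Here I would argue by contradiction: if $p\in X\setminus M$, I would use the neighbourhood bases of the (at most) $n-1$ ``exceptional'' points together with a basic neighbourhood of $p$ to manufacture an open family covering $X$ minus a set of size $<n-1$, invoke item 4. to extract a finite subfamily whose closures cover $X$, and derive that the closure structure forces $p$ already to have been captured in some $M_\alpha$ --- contradicting $p\notin M$. The $n$-Hausdorff separation is what guarantees that at most $n-1$ points can simultaneously sit in the intersection of all these closures, which is exactly why the ``exceptional'' set stays below the threshold $n-1$ demanded by item 4.

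The main obstacle I anticipate is the bookkeeping in the closing-off: one must ensure the exceptional set $A$ used when applying item 4. never exceeds $n-2$ points, and that the finitely many closures produced at each stage genuinely trap every candidate point $p$. In the Hausdorff case ($n=2$) a single exceptional point suffices and the covering argument is clean; for general $n$ the subtlety is coordinating the neighbourhood filters of up to $n-1$ adherent points so that their combined closures, together with witnesses drawn from $M$, leave no room outside $M$. I expect the rest --- the cardinal arithmetic $|M|\leq 2^\kappa$ and the verification that the construction is closed under the needed operations --- to be routine, so the heart of the argument is the contradiction step exploiting item 4. of the $n$-H-closedness characterization.
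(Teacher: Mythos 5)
The paper does not actually prove Theorem \ref{n-H-closed}: it is quoted from \cite{BBCP} without proof, so there is no in-paper argument to compare yours against, and your sketch has to stand on its own. It does not. The gap sits exactly at the step you yourself call decisive. From the closing-off set $M$ you can only manufacture a family of basic neighbourhoods $V_x$ of points $x\in M$ whose closures miss the putative outside point $p$; such a family covers $M$, not $X\setminus A$ for some $A$ with $|A|<n-1$, and $X\setminus M$ is in no way a set of fewer than $n-1$ points, so condition 4.\ of the characterization of $n$-H-closedness cannot be applied to it. You never say how the complement of $M$ gets covered, and this is already the whole difficulty for $n=2$: throwing $X\setminus M$ into the cover only yields $X=\overline{V_{x_1}}\cup\dots\cup\overline{V_{x_k}}\cup\overline{X\setminus M}$, which is useless since $p$ may well lie in $\overline{X\setminus M}$. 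This is precisely why $|X|\leq 2^{\chi(X)}$ for H-closed spaces (Theorem \ref{T3}) is a nontrivial Dow--Porter theorem proved via the semiregularization and absolutes rather than by a generic Arhangel'ski\u{\i}-style recursion, and why the present paper treats the $n$-version as a quoted result rather than a routine corollary.

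There is a second, more structural error: you assert that ``the $n$-Hausdorff separation guarantees that at most $n-1$ points can simultaneously sit in the intersection of all these closures.'' For $n$ distinct points, $n$-Hausdorffness gives open $U_i\ni x_i$ with $U_1\cap\dots\cap U_n=\emptyset$, which controls $\overline{U_1\cap\dots\cap U_n}$ but says nothing about $\overline{U_1}\cap\dots\cap\overline{U_n}$; emptiness of the intersection of \emph{closures} is the $n$-Urysohn property, which an $n$-H-closed space need not enjoy. Since the conclusion of condition 4.\ is $X=\bigcup_{V\in\mathcal{V}}\overline{V}$, any argument along your lines is forced to reason about intersections of closures, and the hypothesis you invoke to control them is simply not available here. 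The paper keeps these hypotheses carefully separate (compare Theorem \ref{T5}, which assumes $n$-Urysohn in addition to $n$-H-closed), and the double-exponential bound of Theorem \ref{ex.3-Hausdorff} for general $3$-Hausdorff spaces is a warning that closure-based closing-off arguments degrade badly once $n>2$. To repair the proof you would need to exploit the finer structure of $n$-H-closedness --- for instance that every open ultrafilter has exactly $n-1$ adherence points, or the absolute machinery developed in \cite{BBCP} --- rather than condition 4.\ alone.
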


Then it is natural to pose also the following question.

\begin{question}\rm\label{question}
	Is it true that $|X|\leq 2^{n\hbox{-}\psi_c(X)}$ (or at least $|X|\leq 2^{n\hbox{-}\psi_c(X)t(X)}$) for every $n$-Urysohn $n$-H-closed space?
\end{question}

Using the following result, we give a partial answer (Theorem \ref{T5} below) to Question \ref{question} in the class of $n$-Urysohn $n$-H-closed spaces.
	\begin{theorem}\rm\cite{BCP}
		If $X$ is an $n$-Urysohn space. Then $|X|\leq 2^{aL(X)n\hbox{-}U\psi(X)}$.
	\end{theorem}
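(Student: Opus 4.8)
The plan is to run a closing-off argument of size $2^{\kappa}$, where $\kappa = aL(X)\cdot n\hbox{-}U\psi(X)$, the only novelty over the classical Hausdorff/Urysohn bounds being the bookkeeping forced by the $n$-ary separation. Fix for every $x\in X$ a family $\{V(\alpha,x):\alpha<\kappa\}$ witnessing $n\hbox{-}U\psi(X)\leq\kappa$ and set $A_x=\bigcap_{\alpha<\kappa}\overline{V(\alpha,x)}$. First I would isolate the key combinatorial consequence of the $n$-Urysohn pseudocharacter: for any $n$ pairwise distinct points $y_1,\dots,y_n$ one has $A_{y_1}\cap\dots\cap A_{y_n}=\emptyset$. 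Consequently, for a fixed point $p$ the ``inseparable set'' $B_p=\{x\in X: p\in A_x\}$ has at most $n-1$ elements, for otherwise $n$ distinct members of $B_p$ would have $p$ in the intersection of their kernels. Equivalently, $A_{x_1}\cap\dots\cap A_{x_{n-1}}\subseteq\{x_1,\dots,x_{n-1}\}$ for distinct $x_i$. Thus $p$ is separated from all but fewer than $n$ points of $X$ by a single set of the form $\overline{V(\alpha,x)}$, which is the $n$-ary substitute for the relation $A_x=\{x\}$ available when $n=2$.

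Next I would build a continuous increasing chain $\{H_\beta:\beta<\kappa^+\}$ of subsets of $X$ with each $H_\beta$ closed and $|H_\beta|\leq 2^{\kappa}$, closed off under the following operation: whenever $A\in[H_\beta]^{\leq\kappa}$ and $g\colon A\to\kappa$ satisfy $\bigcup_{x\in A}\overline{V(g(x),x)}\neq X$, a witness point from $X\setminus\bigcup_{x\in A}\overline{V(g(x),x)}$ is placed into $H_{\beta+1}$. Since there are at most $(2^{\kappa})^{\kappa}\cdot\kappa^{\kappa}=2^{\kappa}$ such tasks at each stage, the estimate $|H_\beta|\leq 2^{\kappa}$ is preserved, and $H=\bigcup_{\beta<\kappa^+}H_\beta$ is closed with $|H|\leq 2^{\kappa}$; here one uses $\mathrm{cf}(\kappa^+)>\kappa$, so every $\leq\kappa$-sized subset of $H$ already lies in some $H_\beta$, making the closing-off genuinely effective.

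To finish I would assume $H\neq X$ and fix $p\in X\setminus H$. By the combinatorial fact the set $E=B_p\cap H$ has at most $n-2$ points, and for every $x\in H\setminus E$ there is $\gamma(x)<\kappa$ with $p\notin\overline{V(\gamma(x),x)}$; the family $\{V(\gamma(x),x):x\in H\setminus E\}$ then covers $H\setminus E$ with all its closures missing $p$. Applying $aL(X)\leq\kappa$ to an open cover of $X$ extending this family, I would extract a subfamily indexed by some $A\in[H]^{\leq\kappa}$ whose closures cover $H\setminus E$ while still avoiding $p$; the closing-off witness for the pair $(A,\gamma)$ then yields a point of $H$ lying outside $\bigcup_{x\in A}\overline{V(\gamma(x),x)}$, which, once the finitely many points of $E$ are disposed of, contradicts that this union contains $H$. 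Hence $H=X$ and $|X|\leq 2^{\kappa}$.

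The main obstacle is exactly the genuinely $n$-ary separation. When $n=2$ one has $A_x=\{x\}$, so $p$ is separated by a single closed neighbourhood from every other point and the cover of $H$ leaves no exceptional points. For $n>2$ the set $E$ of up to $n-2$ points from which $p$ cannot be separated by any single $\overline{V(\alpha,x)}$ must be absorbed, and this is delicate precisely because one \emph{cannot} cover those points by neighbourhoods whose closures avoid $p$. The contradiction must therefore be extracted from the full emptiness $\overline{V(\alpha_1,x_1)}\cap\dots\cap\overline{V(\alpha_n,x_n)}=\emptyset$ applied to the at most $n-1$ points of $E$ together with $p$, either by iterating the almost Lindel\"of selection finitely (at most $n-1$) many times, which keeps the exponent at the single cardinal $2^{\kappa}$, or by an induction on $n$. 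I expect this finite bookkeeping, rather than any cardinal-arithmetic estimate, to be where the real work lies.
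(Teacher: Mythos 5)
This statement is quoted in the paper from the reference [BCP] and no proof is given there, so there is nothing internal to compare against; judged on its own terms, your outline has the right global shape (a closing-off chain of length $\kappa^+$ with stages of size $\le 2^{\kappa}$, and the correct key observation that $B_p=\{x: p\in\bigcap_{\alpha}\overline{V(\alpha,x)}\}$ has at most $n-1$ elements, hence $|B_p\cap H|\le n-2$ when $p\notin H$), but it has two genuine gaps. The first is the application of $aL(X)$. The family $\{V(\gamma(x),x): x\in H\setminus E\}$ covers only $H\setminus E$, and $aL(X)$ only applies to covers of the whole space. If you extend this family by other open sets to cover $X$ and then extract a subfamily of size $\le\kappa$ whose closures cover $X$, nothing guarantees that the members drawn from the original family cover $H\setminus E$: the added sets may be doing that job, and their closures need not avoid $p$. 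What you are implicitly using is $aL(H\setminus E,X)\le\kappa$, i.e. an $aL_c$-type hypothesis (together with $H$ being closed), not $aL(X)$. To run the argument with $aL(X)$ one must build an honest open cover of all of $X$ out of sets whose closures miss $p$ (plus a controlled exceptional part around $B_p$); this is exactly where the second half of the $n$-Urysohn separation --- the closed neighbourhoods of $p$ itself, satisfying $\overline{V(\alpha_1,x_1)}\cap\dots\cap\overline{V(\alpha_n,x_n)}=\emptyset$ --- has to be exploited, and your sketch never uses it. Relatedly, the requirement that each $H_\beta$ be topologically closed cannot be imposed for free: taking closures can destroy the bound $|H_\beta|\le 2^{\kappa}$ absent a tightness hypothesis, and closedness of $H$ is in any case not enough to repair the extraction step.

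The second gap is the one you yourself flag at the end: the final contradiction does not go through with a single witness per uncovered family. If $\mathcal{W}$ covers $H\setminus E$ with $p\notin\bigcup\mathcal{W}$, the closing-off only produces one point of $H$ outside $\bigcup\mathcal{W}$, and that point may lie in $E$, yielding no contradiction. The standard repair is to place $\min\bigl(n-1,|X\setminus\bigcup\mathcal{W}|\bigr)$ points of the complement into the chain at each stage; then either the complement has at least $n-1$ points, forcing $|H\setminus\bigcup\mathcal{W}|\ge n-1>|E|$, or the complement has at most $n-2$ points and is therefore entirely absorbed into $H$, contradicting $p\in X\setminus H$. Since you explicitly defer this ``finite bookkeeping'' rather than carry it out, and since it is precisely the genuinely $n$-ary content of the theorem, the proposal as written is an accurate plan of attack but not a proof.
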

	
	\begin{theorem}\rm\label{T5} Let $X$ be an $n$-Urysohn $n$-H-closed space. Then $|X|\leq 2^{n\hbox{-}U\psi(X)}$ (hence $|X|\leq 2^{\chi(X)}$).
\end{theorem}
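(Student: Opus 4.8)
The plan is to derive the bound $|X|\leq 2^{n\hbox{-}U\psi(X)}$ by combining the immediately preceding theorem, which states $|X|\leq 2^{aL(X)n\hbox{-}U\psi(X)}$ for any $n$-Urysohn space, with the $n$-H-closedness hypothesis. The key observation is that $n$-H-closedness should force the almost Lindel\"of degree to be small; indeed, I expect to show that $aL(X)\leq n\hbox{-}U\psi(X)$ (or even that $aL(X)$ is finite in a suitable sense), so that the product $aL(X)\cdot n\hbox{-}U\psi(X)$ collapses to $n\hbox{-}U\psi(X)$ and the first factor disappears from the exponent. Once that is established, the stated inequality is immediate, and the parenthetical consequence $|X|\leq 2^{\chi(X)}$ follows from the chain $n\hbox{-}U\psi(X)\leq\chi(X)$ recorded in the introduction.

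First I would invoke the characterization of $n$-H-closedness from the equivalence theorem, specifically condition (4): for every $A\in[X]^{<n-1}$ and every family $\mathcal U$ of open sets with $X\setminus A\subseteq\bigcup\mathcal U$, there is a \emph{finite} subfamily $\mathcal V$ with $X=\bigcup_{V\in\mathcal V}\overline V$. This is essentially a statement that $X$ is ``almost compact modulo sets of size $<n-1$.'' The plan is to leverage this to control $aL(E,X)$ for closed $E$. Given a closed set $E$ and an open cover $\mathcal U$ of $E$, I would augment $\mathcal U$ with the open set $X\setminus E$ to obtain an open cover of all of $X$; applying condition (4) with $A=\emptyset$ then yields a finite subfamily whose closures cover $X$, and discarding the closure of $X\setminus E$ leaves finitely many members of $\mathcal U$ whose closures cover a set containing $E$. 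This should give that $aL(E,X)$ is finite, hence $aL(X)\leq\omega\leq n\hbox{-}U\psi(X)$, since the latter is by definition an infinite cardinal.

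The main obstacle I anticipate is handling the ``modulo $A\in[X]^{<n-1}$'' slack in condition (4): when $A=\emptyset$ the argument is clean, but I must verify that taking $A=\emptyset$ is legitimate, i.e.\ that condition (4) applies with the empty exceptional set, which it does since $\emptyset\in[X]^{<n-1}$ for $n\geq 2$. A secondary subtlety is that the finite subfamily from (4) covers $X$ via closures taken in $X$, and I must ensure that restricting attention to the members coming from the original cover $\mathcal U$ of $E$ still covers $E$; this is where the closure $\overline{X\setminus E}$ might intersect $E$ along its boundary, so the argument must confirm $E$ is covered by the closures of the $\mathcal U$-members alone. Once these points are dispatched, the bound $aL(X)\leq n\hbox{-}U\psi(X)$ holds, and substituting into the preceding theorem gives
\[
|X|\leq 2^{aL(X)n\hbox{-}U\psi(X)}=2^{n\hbox{-}U\psi(X)}.
\]
Finally, since $n\hbox{-}U\psi(X)\leq\chi(X)$ always holds for $n$-Urysohn spaces, we conclude $|X|\leq 2^{\chi(X)}$, recovering Theorem~\ref{n-H-closed} as a special case and completing the proof.
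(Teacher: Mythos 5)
Your proposal is correct and is exactly the paper's (implicit) argument: Theorem \ref{T5} is stated as an immediate consequence of the preceding bound $|X|\leq 2^{aL(X)n\hbox{-}U\psi(X)}$ for $n$-Urysohn spaces, once one notes that condition (4) of the characterization of $n$-H-closedness, applied with $A=\emptyset\in[X]^{<n-1}$, shows that every open cover of $X$ admits a finite subfamily whose closures cover $X$, so $aL(X)=\omega\leq n\hbox{-}U\psi(X)$ and the first factor in the exponent is absorbed. Your detour through $aL(E,X)$ for arbitrary closed $E$ (and the attendant worry about points of $E$ lying in $\overline{X\setminus E}$) is unnecessary: only $aL(X)=aL(X,X)$ appears in the exponent, and for $E=X$ no augmentation of the cover is needed, so the boundary issue never arises.
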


\begin{remark}\rm
	Note that, by Theorems \ref{T4} and \ref{n-H-closed}, a positive answer to Question \ref{q1} allows us to give a positive answer to Question \ref{question} too.
\end{remark}

Finally, we give a bound for the cardinality of $n$-Urysohn, locally $n$-H-closed spaces. This result is an analogue of Theorem 4.2 in \cite{BCG1} which involves locally H-closed spaces. Recall that a space is called locally $n$-H-closed if for every point $x$ there exists an open neighbourhood which closure is $n$-H-closed. 
\begin{theorem}\rm
	Let $X$ be an $n$-Urysohn locally $n$-H-closed space. Then $|X|\leq wL(X)^{\chi(X)}$.
\end{theorem}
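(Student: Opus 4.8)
The plan is to run a single elementary-submodel closing-off argument. Write $\kappa=\chi(X)$, $\lambda=wL(X)$ and $\mu=\lambda^{\kappa}$, and note that $\mu^{\kappa}=(\lambda^{\kappa})^{\kappa}=\mu$ and $2^{\kappa}\le\mu$. First I would fix, for each $x\in X$, an open neighbourhood $W_x$ with $\overline{W_x}$ being $n$-H-closed (local $n$-H-closedness), together with a neighbourhood base $\{B(\alpha,x):\alpha<\kappa\}$ at $x$ with each $B(\alpha,x)\subseteq W_x$. Then I would take an elementary submodel $M\prec H(\theta)$ containing $X$, its topology and the assignment $x\mapsto(W_x,\{B(\alpha,x)\}_{\alpha})$, with $|M|=\mu$, $\mu\subseteq M$ and ${}^{\kappa}M\subseteq M$ (possible since $\mu^{\kappa}=\mu$). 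The goal is to prove $X\subseteq M$, which gives $|X|\le|M|=\mu=\lambda^{\kappa}$.

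The first step is pure bookkeeping. For each $x\in X\cap M$ the set $\overline{W_x}$ is $n$-H-closed, so by Theorem \ref{n-H-closed} $|\overline{W_x}|\le 2^{\chi(\overline{W_x})}\le 2^{\kappa}\le\mu$; since $\overline{W_x}\in M$ and $\mu\subseteq M$, elementarity forces $\overline{W_x}\subseteq M$. Consequently $X\cap M=\bigcup_{x\in X\cap M}W_x$ (the inclusion $\subseteq$ holds because $x\in W_x$, and $\supseteq$ because each $W_x\subseteq\overline{W_x}\subseteq M$ and $W_x\subseteq X$), so $X\cap M$ is open in $X$. Next I would show that $X\cap M$ is dense: the family $\{W_x:x\in X\}$ is an open cover of $X$, so from the definition of $wL(X)=\lambda$ and elementarity there is $E\in M$ with $E\in[X]^{\le\lambda}$ and $X=\overline{\bigcup_{x\in E}W_x}$; since $\lambda\le\mu$ and $\mu\subseteq M$ we get $E\subseteq X\cap M$, whence $X\cap M\supseteq\bigcup_{x\in E}W_x$ is dense in $X$.

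It remains to rule out a point $p\in X\setminus M$, and this is where I expect the real difficulty to lie. By the first step $p\notin\overline{W_x}$ for every $x\in X\cap M$. Consider the $n$-H-closed neighbourhood $K=\overline{W_p}$; since $X\cap M$ is dense, $K\cap M$ is dense in $K$, and the relatively open family $\{W_x\cap K:x\in X\cap M\}$ covers $K\cap M=K\setminus(K\setminus M)$. If the exceptional set $A=K\setminus M$ had fewer than $n-1$ points, then condition (4) in the characterization of $n$-H-closed spaces recalled above would yield finitely many $x_1,\dots,x_m\in X\cap M$ with $K=\bigcup_{j\le m}\overline{W_{x_j}\cap K}^{K}\subseteq\bigcup_{j\le m}\overline{W_{x_j}}$, forcing $p\in\overline{W_{x_j}}$ for some $j$ and contradicting $p\notin\overline{W_x}$ for $x\in X\cap M$. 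Thus the heart of the proof is to arrange, after possibly shrinking $W_p$ within the neighbourhood base at $p$ (each $\overline{B(\alpha,p)}$ is again $n$-H-closed, being a regular-closed subset of the $n$-H-closed set $\overline{W_p}$), that $\overline{B(\alpha,p)}\setminus M$ has at most $n-2$ points. This is precisely the step in which $n$-Urysohnness must enter: it should be used to separate the excess boundary points of $p$ from one another by closures and so to expel all but $n-2$ of them from a sufficiently small $\overline{B(\alpha,p)}$. Bridging the weak-Lindelöf ``dense union'' to a genuine cover-by-closures of the local $n$-H-closed piece, by forcing this exceptional set below the threshold $n-1$, is the main obstacle and the crux on which the whole argument turns.
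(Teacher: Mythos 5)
Your argument is not complete: the closing-off step, which is the whole point of the submodel construction, is left unresolved. You correctly set up $M\prec H(\theta)$ with $|M|=\mu=wL(X)^{\chi(X)}$, show $X\cap M$ is open (via $|\overline{W_x}|\le 2^{\chi(X)}\le\mu$ and $\mu\subseteq M$) and dense (via $wL$ and elementarity), but the final step --- showing $X\setminus M=\emptyset$ --- is reduced to the claim that one can shrink $W_p$ so that $\overline{B(\alpha,p)}\setminus M$ has at most $n-2$ points, and you offer no argument for this. There is no evident mechanism that would deliver it: for $p\notin M$ the set $K\setminus M$, $K=\overline{W_p}$, is a priori arbitrary (openness and density of $X\cap M$ say nothing about its size), and $n$-Urysohnness separates $n$ points by closures of neighbourhoods but does not ``expel'' the points of $K\setminus M$ from a smaller closed neighbourhood of $p$; those points need not be related to $p$ or to each other in any way that the separation axiom can exploit. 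As written, the proof stops exactly where it would have to start doing work.

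The statement in fact has a much shorter proof that bypasses all of this, and it shows where the $n$-Urysohn hypothesis genuinely enters. For each $x$ pick $U_x$ open with $\overline{U_x}$ $n$-H-closed; by Theorem \ref{n-H-closed}, $|\overline{U_x}|\le 2^{\chi(\overline{U_x})}\le 2^{\chi(X)}$. Since $\{U_x:x\in X\}$ is an open cover, weak Lindel\"ofness gives $\mathcal{V}\in[\{U_x:x\in X\}]^{\le wL(X)}$ with $X=\overline{\bigcup\mathcal{V}}$, so $\bigcup\mathcal{V}$ is dense and $d(X)\le wL(X)\cdot 2^{\chi(X)}\le wL(X)^{\chi(X)}$. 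Now Corollary \ref{chi} (i.e.\ $|X|\le d(X)^{\chi(X)}$ for $n$-Urysohn spaces, which is where $n$-Urysohnness is used) yields $|X|\le \bigl(wL(X)^{\chi(X)}\bigr)^{\chi(X)}=wL(X)^{\chi(X)}$. The lesson is that one only needs to bound the density, not to trap every point of $X$ inside a submodel; once the density is controlled, the already-established $n$-Urysohn cardinality bound finishes the argument.
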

\begin{proof}
	For every $x\in X$ there exists an open neighbourhood $U_x$ of $x$ such that $\overline{U_x}$ is $n$-H-closed. By Theorem \ref{n-H-closed}, $|\overline{U_x}|\leq 2^{\chi(\overline{U_x})}\leq 2^{\chi(X)}$. Clearly, ${\cal U}=\{U_x:x\in X\}$ is an open cover of $X$, then there exists ${\cal V}\in [{\cal U}]^{\leq wL(X)}$ such that $X=\overline{\bigcup {\cal V}}$. The set $\bigcup {\cal V}$ is dense in $X$ and $|\bigcup{\cal V}|\leq wL(X)2^{\chi(X)}\leq wL(X)^{\chi(X)}$. By Corollary \ref{chi} we can conclude the proof.
\end{proof} 

\bigskip

{\bf Acknowledgement:} The authors express gratitude to I. Juhasz and L. Zdomskyy for useful discussions. The research was supported by \lq\lq National Group for Algebric and Geometric Structures, and their Applications" (GNSAGA-INdAM). 

\end{document}